\newtheorem{theorem}{Theorem}[section]
\newtheorem{lemma}[theorem]{Lemma}
\newtheorem{definition}[theorem]{Definition}
\theoremstyle{remark}
\newtheorem{remark}[theorem]{Remark}
\newtheorem{example}[theorem]{Example}
\def\Abar{{\bar{A}}}
\def\deltap{{\delta}'}
\def\delp{{\partial}'}
\def\Q{{\mathbb Q}}
\def\db{{\bar{\d}}}
\def\emptyb{\overline{\emptyset}}
\def\Bar{\overline{\phantom{x}}}
\def\chib{{\bar{\chi}}}
\def\starb{{\bar{*}}}
\def\Bbar{{\bar{B}}}
\def\Ebar{{\bar{E}}}
\def\Dbar{{\overline{D}}}
\def\xbar{{\bar{x}}}
\def\mbar{{\overline{m}}}
\def\Ibar{{\bar{I}}}
\def\Cbar{{\bar{C}}}
\def\half{{\frac12}}
\def\d{\partial}
\def\id{{\rm id}}
\def\idb{{\overline{\rm id}}}
\def\vbar{\bar{v}}
\def\wbar{\bar{w}}
\def\dbar{\bar{\d}}
\def\taubar{{\overline{\tau}}}
\def\Vbar{\overline{V}}
\def\Wbar{\overline{W}}
\def\deltabar{\bar{\delta}}
\def\ubar{\bar{u}}
\def\eu{{\bf e}_u}
\def\Tu{T_{u}}
\def\Tup{T'_{u}}
\def\tu{T'_{u}}
\def\Deltab{{\bar{\Delta}}}
\def\bs{\backslash}
\def\Z{\mathbb{Z}}
\def\R{\mathbb{R}}
\def\N{\mathbb{N}}
\def\T{\mathbb{T}}
\def\binomial#1#2{\begin{pmatrix}#1\\#2\end{pmatrix}}
\title{Quantitative towers in finite difference calculus approximating the continuum\footnote{To appear in {\it Quarterly Journal of Mathematics}}}
\author{R.~Lawrence\footnote{Einstein Institute of Mathematics, Hebrew University of Jerusalem},\  N.~Ranade\footnote{Cold Spring Harbor Laboratory, NY}  \ and D.~Sullivan\footnote{CUNY Graduate Center NY \& SUNY, Stony Brook NY}}
\date{\it In memory of Sir Michael Atiyah}
\begin{document}
\maketitle

\textbf{Abstract}
Multivector fields and differential forms at the continuum level have respectively
 two commutative associative products, a third  composition product between them  and various operators like $\d$, $d$ and $*$ which are used to describe many nonlinear problems. The point of this paper is to construct consistent direct and inverse systems of finite dimensional approximations to these structures and to calculate combinatorially how these finite dimensional models differ from their continuum idealizations.  In a Euclidean background there is an explicit  answer which is natural \textbf{statistically}.

 \section{Introduction}
At the continuum level there is a rich nonlinear structure (see Background Appendix) with more symmetry than is possible for any system of finite dimensional approximations.

  The discrete vector calculus discussed here is  based on combinatorial topology with an  unexpected Grassmann algebra fit. One  will  have at each scale
  two finite dimensional graded commutative associative algebras: one product on chains and one product on cochains.  There  is  a  third  contraction  product  between  chains  and cochains.   The  chains  have  their  differential $\d$ of  degree ${-}1$  and  the  cochains have  their  differential  $\delta$ of  degree  +1.   Neither  $\d$ nor $\delta$ is  a  derivation  of  its product.  Also
  $\d$ is further away by one scale order of magnitude from being a derivation than $\delta$ is.

  In periodic Euclidean-space the new point \cite{S}
  is to consider $2{^n}$ overlapping scale $2h$-cubical grids where $h=2{^{-i-1}}$ at level $i$ to build finite-dimensional spaces of combinatorial chains and cochains.  This particular choice allows discrete vector and covector Grassmann products and their contraction product.  The boundary  operator
  $\d$ on  chains  and  the  coboundary  operator  $\delta$ on  cochains are precisely related by the Poincar\'e duality “star operator".

The operators do not “derive” their product structures as they do in the continuum analogue, but their higher order deviations from such, called “infinitesimal cumulants”, have a noteworthy $h$-divisibility structure  developed here.

   Namely, one finds for each level of discretization $i$, the  finite-dimensional algebras indicated above, after extending scalars by power series in the formal  scale parameter $h$,  with their operators $\delta$ for cochains and $\d$  for chains, satisfy $h$ divisibility properties that  make them respectively  into consistent inverse and direct systems of  binary QFT algebras. See {\S\S6,7} below.  Also similar estimates control the \textbf{traditional cumulants} of the linear mappings respecting boundary operators between the commutative algebras at different scales.

   We will apply to each finite dimensional algebra with differential, a general  procedure in order to resolve the broken symmetry of continuum algebra.

   The procedure involves  higher-order brackets which are Taylor coefficients of coderivations. These are described in \S2 and \cite{M}. The details are  new for the $\partial$ case  and for the $\delta$ case in that the structure  resonates with Jae-Suk Park's algebraic background  and computational prescription (2018) for perturbative QFT \cite{P}  referred to as binary QFT algebras. These are described in terms of divisibility of the  brackets above  by appropriate powers of the scale variable. This binary QFT formulation at the finite rank algebra level extends the theoretical physicist's Batalin-Vilkovisky formalism that requires the continuum.

 \medskip\begin{definition}  If a graded  commutative associative algebra over the formal power series in  a formal variable $h$ has a differential, this  determines  a structure   consisting of  all the brackets  referred to above (which turn out to be expressible in terms of   commutators of the  differential  with iterated  products; see example 2.5). If these all vanish this is called a classical differential algebra. If the $(k+1)$-bracket is \textbf{divisible} by $h^k$ for all $k\in\N$, this is called a \textbf{ binary QFT algebra}.
 \end{definition}

 We put ourselves in the context of this definition by extending the scalars of the  chains and cochains in the construction by the formal power series in $h$.  We add  relations to this free module  making the differences divisible by $h$, these differences being defined using the shifts by $h$  in the coordinate directions of the lattice.   More details are in \S\S6,7. The proofs fill  \S\S2,3,4,5,6,7.

\begin{theorem}
    \begin{itemize}
    \item[]
    \item[A)] The  sequence of bracket  operators of $\d$ on lattice chains  referred to above and described in \S2   are divisible by the following powers of $h$:   $1,1,2,3,\ldots$.   This means by Definition 1.1, that the operator  $\partial$ on the chains over formal power series in $h$ with the difference quotients added, defines a binary QFT algebra.

    \item[B)]  There are natural linear mappings from a coarse scale to a finer scale   respecting the $\partial$ operators and  there are  canonical structure-preserving  mappings  between  the  infinitesimal cumulant bracket structures defined at each scale. The Taylor components of these coalgebra mappings are the usual  cumulants of statistics and are divisible by  appropriate powers of $h$.
    \end{itemize}
\end{theorem}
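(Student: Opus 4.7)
The plan is to reduce both statements to explicit computations on the lattice chains once the bracket formalism of \S2 is in hand. By Example 2.5, each $(k{+}1)$-bracket of $\partial$ with the chain wedge $m$ is a signed sum of iterated commutators, so the $h$-divisibility claim of (A) becomes a statement about how many coordinate-shift differences survive when one expands $\partial$ applied to a product of $k{+}1$ cubical generators and subtracts its Leibniz-type corrections.

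First I would make precise the effect of imposing the relation ``$h$-shifts are divisible by $h$'' on the scalar-extended chain module: on any cube $c$, $\partial c$ is a signed sum of pairs of opposite faces differing by a coordinate translation of size $h$, so after the relation $\partial$ factors as $h$ times a discrete difference-quotient operator. This immediately delivers the first entry of the sequence $1,1,2,3,\ldots$, namely that $\partial$ itself is divisible by $h$. Next I would check the 2-bracket $[\partial,m]$ by expanding $\partial(a\wedge b)-\partial a\wedge b\pm a\wedge\partial b$ on products of two cubes; after cancellation on matched faces the deviation from Leibniz reduces to a single residual $h$-translation, giving divisibility by $h$. The inductive combinatorial claim driving the rest of the sequence is that the $(k{+}1)$-fold commutator of $\partial$ with $m$ can be rewritten as a nested finite difference of order $k{-}1$, each nesting contributing one factor of $h$ through an opposite-face identification, which yields $h^{k-1}$ for $k\ge 2$ and in particular verifies the $(k{+}1)$-bracket $\equiv 0\bmod h^k$ condition of Definition 1.1.

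For part (B) I would construct the refinement map $\varphi_{i,i+1}$ from the scale-$2h$ chains at level $i$ into the scale-$h$ chains at level $i{+}1$ by the evident subdivision of each cube into $2^n$ oriented subcubes; the check that this commutes with $\partial$ is elementary. The map is only approximately compatible with the wedge products on the two scales, and the discrepancies assemble into the Taylor components of a coalgebra (i.e.\ $L_\infty$-type) morphism between the bracket structures of \S2. The content of (B) is then to identify these Taylor components with the probabilistic cumulants associated to the uniform distribution over the $2^n$ translates arising from subdivision, and to note the standard moment-to-cumulant computation: the $k$-th cumulant of $k$ independent uniform-on-$[0,h]$ shifts vanishes to order $h^{k-1}$. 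Expressing the refinement as an average of translated embeddings turns this probabilistic fact into the required divisibility, as the logarithm of the generating function picks out exactly the nested finite differences that controlled (A).

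The main obstacle is the inductive combinatorial bookkeeping in (A) for arbitrary bracket order. For small $k$ the explicit cancellations on pairs of opposite faces can be checked by hand, but for general $k$ one needs a clean symbolic identity for the Hochschild-type cochain measuring non-derivation, one that tracks the number of opposite-face pairs that survive after every lower-order Leibniz correction has been subtracted. Producing that identity, or equivalently a generating-function description of the iterated commutators, is the technical heart of the argument; once it is in place, both the binary QFT divisibility in (A) and the matching with statistical cumulants in (B) follow by standard manipulations.
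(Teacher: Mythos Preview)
Your outline captures the right shape of the argument, but the two places you flag as ``the technical heart'' are precisely where the paper supplies machinery that your sketch does not, and in (B) there is an additional geometric subtlety you have overlooked.

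For (A), the paper does not attack the iterated commutator directly. Instead it writes $\partial=\sum_u(T_u-T'_u)\circ i_u$ using the interior product $i_u$, proves a \emph{shifted} Leibniz rule for $T_u i_u$ and $T'_u i_u$ with respect to the wedge (Lemma~5.1), and then feeds this into the three-term recursion $\partial_k(v_1\wedge v_2\wedge\cdots)=\partial_{k-1}(v_1v_2\wedge\cdots)-(-1)^{|v_1|}v_1\partial_{k-1}(\cdots)-\cdots$ of Lemma~2.7. The output is a closed formula (Theorem~5.2) for $\partial_{k,u}$ carrying an explicit prefactor $h^{k-2}$ times a sum of tensor products of $\Delta_u$, $\Delta'_u$, $T_u i_u$, $T'_u i_u$. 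Your ``nested finite difference'' picture is morally this, but without isolating $i_u$ and its shifted derivation property you will not get the clean induction; the opposite-face cancellations you describe do not by themselves organise into the required $h^{k-2}$.

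For (B), the ``evident subdivision into $2^n$ subcubes'' is not available as stated: the chains at each scale are generated by \emph{all} $2h$-cubes in the $h$-lattice, so a coarse $4h$-cube does not subdivide into fine $2h$-cubes centred on the fine lattice without first shifting the odd sublattices (see \S7.2). The paper fixes a specific shift convention, works one dimension at a time, and then proves the $h^{k-1}$ divisibility of $\sigma_k$ by a direct coefficient computation (Lemma~7.7) that collapses, via the combinatorial identity~(3), to a product of $k-1$ differences of values at points $2h$ apart; Lemma~7.6 then tensors up to dimension $n$. Your appeal to cumulants of ``independent uniform-on-$[0,h]$ shifts'' is suggestive but is not the mechanism: the $\sigma_k$ here are algebraic cumulants of a linear map between two algebras (Remark~7.4), and the divisibility comes from the explicit partition sum, not from moments of a probability distribution.
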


\begin{theorem}
    \begin{itemize}
    \item[]
   \item[A)] The  sequence of bracket operators of $\delta$ on lattice cochains have scale orders $1,2,3,\ldots$ in powers of the scale $h$. This means by Definition 1.1 that the operator $h^{-1}\delta$ on the cochains  with scalars extended to the formal power series in $h$ with difference quotients added, defines a binary QFT algebra.

     \item[B)] There are natural linear mappings from a  fine  scale to a  coarser  scale   respecting the  $\delta$ operators and  there are  canonical structure-preserving  mappings  between  the  infinitesimal cumulant bracket structures defined at each scale. The Taylor components of these coalgebra mappings are the usual  cumulants of statistics and are divisible by  appropriate powers of $h$.
    \end{itemize}
\end{theorem}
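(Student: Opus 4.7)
My approach is to leverage the description in Example 2.5 of the $(k{+}1)$-bracket as an iterated commutator of $\delta$ with the cup product $\mu$ on cochains, and to exploit the fact that on the $2^n$ overlapping $2h$-cubical grids both $\delta$ and $\mu$ are built from elementary shift operators of step $h$. The aim of part (A) is to show that each additional cup-product nesting in the commutator forces one additional factor of a shift difference $T_v - \id$, which after imposing the difference-quotient relations of \S7 is divisible by $h$.

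First I would fix notation on a single $2h$-cubical grid, writing elementary cochains as indicators of oriented cubical faces and expressing $\delta$ as a signed sum over coboundary faces. After extending scalars to formal power series in $h$ and adjoining the relations of \S7, the atomic ingredients of both $\delta$ and $\mu$ become translations $T_v$ with $v$ a lattice vector of length a positive multiple of $h$, and each difference $T_v - \id$ is then divisible by $h$. I would verify directly that the $2$-bracket $[\delta,\mu]$, i.e.\ the obstruction to the Leibniz rule, is a sum of terms each carrying exactly one such shift difference; together with the fact that $\delta$ itself gains one $h$ from the same relations, this gives bracket-order $2$ for the $2$-bracket.

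The inductive step is the heart of part (A). The $(k{+}1)$-bracket is obtained by commuting $\delta$ with a further insertion of $\mu$, and the calculation mirrors the $k=1$ case: the new cup-product slot contributes an additional face-alignment shift, which by the \S7 relations yields one more factor of $h$. Summing, the $(k{+}1)$-bracket has $h$-order $k+1$, so that $h^{-1}\delta$ has bracket-orders $0,1,2,\ldots$, matching Definition 1.1. For part (B), the fine-to-coarse maps are the transposes of the natural subdivision maps on chains and commute with $\delta$ by construction; by the coderivation framework of \S2 they extend to a coalgebra morphism whose Taylor components, by the standard moment-to-cumulant inversion, are the classical cumulants, and their $h$-divisibility follows from the same shift-counting argument applied to the failure of multiplicativity at each order.

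The principal obstacle I anticipate is the combinatorial bookkeeping in the inductive step of part (A): the $(k{+}1)$-fold commutator is a signed sum over many configurations of face and grid choices among the $2^n$ overlapping grids, and one must rule out any cancellation that would absorb an expected power of $h$. The cleaner sequence $1,2,3,\ldots$ for $\delta$ compared to $1,1,2,3,\ldots$ for $\partial$ in Theorem 1.2 reflects that the cup product on cochains is defined by restriction to boundary faces and aligns natively with the grid shifts, while the chain side requires a fattening that costs one unit of $h$; so the real work is confirming that the cochain combinatorics genuinely avoid that loss.
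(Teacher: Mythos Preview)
Your overall strategy for part (A) --- induction on $k$ using the iterated-commutator description of the brackets (Lemma~2.7) together with the observation that shift differences $T_u-\id$ are divisible by $h$ --- is exactly the paper's approach, and your intuition that each extra nesting produces one extra such difference is correct. However, you have one genuine misconception: the cup product $m$ on cochains is \emph{purely pointwise} (equation~(5) in \S3); it involves no shifts whatsoever. The source of the extra $h$ at each inductive step is not a ``face-alignment shift'' hidden in $\mu$, but rather the \emph{shifted Leibniz rule} for $\delta_u$ (Lemma~3.1(iii)):
\[
\delta_u(fg)=(\delta_u f)(T'_u g)+(-1)^{|f|}(T_u f)(\delta_u g),
\]
so that $\delta_{2,u}(f\wedge g)=(\delta_u f)(T'_u-\id)g+(-1)^{|f|}(T_u-\id)f\cdot(\delta_u g)$, which is manifestly $h$ times a bounded expression. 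Your worry about cancellations among the $2^n$ grids is misplaced for the same reason: the paper does not attack the raw alternating sum of Lemma~2.6 but instead proves, by this induction, a closed formula (Theorem~4.1)
\[
\delta_{k,u}=h^{k-1}\,m_k\circ\sum_{i=1}^k(-\Delta'_u\idb)^{\otimes(i-1)}\otimes\delta_{1,u}\otimes\Delta_u^{\otimes(k-i)},
\]
in which the factor $h^{k-1}$ is visible and no signed bookkeeping over face configurations is needed.

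For part (B) your sketch is correct in outline --- the integration map is dual to crumbling, it commutes with $\delta$, and its coalgebra lift has Taylor components equal to the classical cumulants (Lemma~7.3) --- but ``the same shift-counting argument'' is not enough of a plan. The paper proves the $h^{k-1}$ divisibility of $\sigma_k$ by an explicit computation in dimension one (Lemma~7.7), where the combinatorics collapse to an identity of the form $\sum_I(-1)^{k-|I|}\prod_{i\in I}a_i\prod_{i\notin I}b_i=\prod_i(a_i-b_i)$, and then extends to dimension $n$ via a separate tensor-product lemma for cumulants (Lemma~7.6). You should expect to need both of these ingredients rather than a direct reuse of the part~(A) mechanism.
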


      \bigskip \textbf{Takeaway:} The calculations  below will prove these theorems and  show that as the grid size tends to zero, the  infinitesimal cumulants of the   $h^{-1}\d$ structure on chains with respect to the product, tends to  zero starting  at the 3-bracket. The $h^{-1}\d$ structure  converges  to the $\d$  differential geometric algebra discussed in the Background  Appendix, where the  commutator with $\d$ there includes the Lie  bracket of vector fields.

     For cochains the quadratic term also tends to zero,  with the  $h^{-1}\delta$ operator  becoming  approximately a derivation of the exterior  product on cochains, and the  entire structure approaches the differential algebra of differential forms, also  with  the  features as discussed in the Background Appendix.

    Finally, these algebras of chains and cochains make up consistent direct and inverse  systems with explicit structure-preserving mappings. The Taylor coefficients of the  mappings between scales  are the cumulants of random variables in statistics  and these satisfy appropriate divisibility by powers of $h$.

    The entire package in a Euclidean background might be thought of as a universal explicit prescription for treating the  interesting  closure problem in finite dimensional approximations to  nonlinear problems. It might substitute for the  ``effective action" approximations of theoretical physics when the action principle is not apparent or not paramount.

 \textbf{The layout of the paper is as follows.}
\S2 recalls for the reader the multi-bracket structure defined by the cumulant bijection and builds most of the combinatorics needed for the proofs. The geometry of the lattice beginning in three-dimensions is explained in \S{3}. The divisibility by powers of $h$ computations are carried out on the lattice for $(\wedge,\delta)$ in \S{4} and for $(\wedge,\d)$ in \S{5}.  Chain and cochain
 mappings are  given in \S7 between the algebras of chains on lattices at two different scales
and  extended to give \textbf{ binary QFT algebra} morphisms, i.e. with the estimates, between the higher bracket structures at various scales.

\textbf{Acknowledgements }
 The structure of the cellular combinatorics from  \cite{S} and computations of its cumulant structure  here, yield the $h$-divisibility properties. These in turn require the special nature (\S2.2, \S2.3) of the cumulant bijection,  discussed in   \cite{RS} and in \cite{R2}. The functorial aspects of the cumulant bijection are used first in \cite{S14} and then in \S7 to obtain a direct and an inverse system  of binary QFT algebras in each dimension $n$  and with morphisms between scales.

 The cumulant bijection was directly inspired by a CUNY Einstein Chair online lecture of  Jae Suk Park in 2012 which    eventually become his formulation of \textbf{binary QFT algebras} in 2018 (see \cite{P} pp 19,20).  The  results here  are for specific combinatorial constructions from which the continuum algebra emerges.
 Some aspects fit with Markl \cite{M} in the context of homotopical algebra. The constructions here  are  canonical,   explicit and rigid. We would like to thank the referee for their comments on making the exposition more focused and succinct.

\section{General constructions}
Suppose that $V$ is a graded commutative associative algebra $(V,m)$ with square-zero map $\d:V\longrightarrow{}V$ of degree $\pm1$.  The $\Z$-grading in $V$ is denoted by $|\cdot|$. Write $\pm^{x,y}$ for the sign $(-1)^{|x|.|y|}$. (Graded) commutativity means as usual that $yx=\pm^{x,y}xy$ where we write $xy$ for $m(x\otimes{}y)$.

In this section we will discuss the infinitesimal cumulants of $\partial$ (namely its higher deviations from being a derivation) and in particular see how they can be viewed as ``Taylor coefficients of a coderivation" of square zero on  $S^*V$, with   ``$\ldots$" being explained below. To keep the paper self-contained, to establish our notation and to give the formulae for the first few terms we  have  written subsections 2.1-2.3 with their proofs. Much of this is familiar to various experts. See \cite{BL} especially  and also \cite{M} for  interesting general perspectives.  The specific  estimates here  on statistical cumulants   using these subsections    plus \S\S3,4,5,7 were the goal and  seem to be new.

\subsection{Comultiplication on $S^*V$}
Consider the tensor algebra $T^*V = \bigoplus_{k=1}^{\infty}V^{\otimes k}$.  The grading on $V$ induces a grading on $T^*V$, so that $x_1\otimes\ldots\otimes{}x_k$ has grading $|x_1|+\cdots+|x_k|$. There is a (graded) coassociative comultiplication $\Delta$ on $T^*V$ of degree zero defined by
\[\Delta(x_1\otimes\cdots\otimes{}x_k)
=\sum_{i=1}^{n-1}(x_1\otimes\cdots\otimes{}x_i)\otimes(x_{i+1}\otimes\cdots\otimes{}x_k)
\]
 Consider $S^kV\subset{}V^{\otimes{}k}$ as the subspace spanned by
\[
x_1\wedge\ldots\wedge{}x_k\equiv{}\sum\limits_{\sigma\in{}S_k}
\pm^{(\sigma,x)}x_{\sigma(1)}\otimes\cdots\otimes{}x_{\sigma(k)}
\]
for $x_1,\ldots,x_k\in{}V$, where $\pm^{(\sigma,x)}\equiv\prod_{i>j,\sigma^{-1}(i)<\sigma^{-1}(j)}
(-1)^{|x_i|\cdot|x_j|}$. The grading on $T^*V$ induces a grading on $S^*V$, so that $x_1\wedge\ldots\wedge{}x_k$ has grading $|x_1|+\cdots+|x_k|$.
The coassociative comultiplication $\Delta$ defined above on $T^*V$ induces a cocommutative coassociative comultiplication of degree $0$ on $S^*V$.
Explicitly,
\[
\Delta(v_1\wedge\ldots\wedge{}v_k)=\sum_I\pm^Iv_I\otimes{}v_{I^c}
\]
the sum being over all $2^k-2$  proper non-empty subsets $I$ of $[k]\equiv\{1,\ldots,k\}$, whose complement is denoted $I^c$, while if the elements of $I$ are written in increasing order as $I=\{i_1,\ldots,i_r\}$ then $v_I\equiv{}v_{i_1}\wedge\ldots\wedge{}v_{i_r}$. The sign is
\[
\pm^I=\prod\limits_{i\in{}I,j\in{}I^c,i>j}(-1)^{|v_i|\cdot|v_j|}\eqno{(1)}
\]
Verification of cocommutativity, $P\circ\Delta=\Delta$ where $P(v\otimes{}w)=(-1)^{|v|\cdot|w|}w\otimes{}v$ follows from the identity  $\pm^I\pm^{I^c}=(-1)^{|v_I|\cdot|v_{I^c}|}$.

\medskip\begin{example} The comultiplication applied to an element of $S^3V$ is a sum of $2^3-2=6$ terms. Thus $\Delta(v_1\wedge{}v_2\wedge{}v_3)$ is
\begin{align*}
&v_1\otimes{(}v_2\wedge{}v_3)
+(-1)^{|v_1|\cdot|v_2|}v_2\otimes{}(v_1\wedge{}v_3)
+(-1)^{(|v_1|+|v_2|)|v_3|}v_3\otimes{}(v_1\wedge{}v_2)\\
&+(v_1\wedge{}v_2)\otimes{}v_3
 +(-1)^{|v_2|\cdot|v_3|}(v_1\wedge{}v_3)\otimes{}v_2
+(-1)^{|v_1|(|v_2|+|v_3|)}(v_2\wedge{}v_3)\otimes{}v_1
\end{align*}
\end{example}

\subsection{Cumulant bijection}
Let $\tau_1:S^*V\longrightarrow{}V$ be the multiplication, that is, $\tau_1(v_1\wedge\cdots\wedge{}v_n)=v_1\cdots{}v_n$.
Following \cite{RS}, define the {\sl cumulant bijection} $\tau:S^*V\longrightarrow{}S^*V$ to be the coalgebra lift of $\tau_1$ to $S^*V$.
This means that  $\tau\equiv\sum\limits_{r=1}^\infty\tau_r$ where $\tau_r:S^*V\longrightarrow{}S^rV$ is defined for $r>1$ by
\[
    \tau_r=(\tau_1)^{\otimes{}r}\circ\Delta^{r-1}
\]
where $\Delta^{r-1}:S^*V\longrightarrow{}(S^*V)^{\otimes{}r}$ is the iterated coproduct defined by
$\Delta^1=\Delta$ and $\Delta^{n+1}=(\Delta\otimes\id^{\otimes{}n})\circ\Delta^n$.
That $\tau:S^*V\longrightarrow{}S^*V$ is a coalgebra map means that $(\tau\otimes\tau)\circ\Delta=\Delta\circ\tau$.

\begin{example} On monomials of order up to three, the formulae for $\tau$ are
\begin{align*}
    \tau(a)&=a\\
    \tau(a\wedge{}b)&=ab+a\wedge{}b\\
    \tau(a\wedge{}b\wedge{}c)&=abc+a\wedge{}bc+(-1)^{|a|\cdot|b|}b\wedge{}ac+(-1)^{(|a|+|b|)|c|}c\wedge{}ab+a\wedge{}b\wedge{}c
\end{align*}
As a linear map $\tau:S^*V\longrightarrow{}S^*V$, its action is block upper-triangular where the $r$-th  block is $S^rV$, and the diagonal blocks are the identity. Thus $\tau$ is invertible and its inverse looks like
\begin{align*}
    \tau^{-1}(a)&=a\\
    \tau^{-1}(a\wedge{}b)&=a\wedge{}b-ab\\
    \tau^{-1}(a\wedge{}b\wedge{}c)&=a\wedge{}b\wedge{}c\!-\!a\wedge{}bc
    \!-\!(-1)^{|a|\cdot|b|}b\wedge{}ac\!-\!(-1)^{(|a|+|b|)|c|}c\wedge{}ab+2abc
\end{align*}
\end{example}

In particular, the coefficients in the formulae for $\tau$ are always $\pm1$ while those in the formulae for $\tau^{-1}$ are more complicated, depending on the relevant partition. The component of $\tau^{-1}(v_1\wedge\cdots\wedge{}v_k)$ in $V$ (with no wedges) is a multiple (depending on $k$) of the product $\tau_1(v_1\wedge\cdots\wedge{}v_k)=v_1\cdots{}v_k$.

\medskip\begin{lemma} On $S^kV$, the actions of $\tau$ and $\tau^{-1}$ are given explicitly by
\begin{align*}
\tau(v_1\wedge\cdots\wedge{}v_k)&=\sum\limits_{r=1}^k
\sum\limits_{I_1\cup\cdots\cup{}I_r=[k]}
\pm\tau_1(v_{I_1})\wedge\cdots\wedge\tau_1(v_{I_r})\\
\tau^{-1}(v_1\wedge\cdots\wedge{}v_k)&=\sum\limits_{r=1}^k
\sum\limits_{I_1\cup\cdots\cup{}I_r=[k]}
\left(\prod\limits_{p=1}^rd_{|I_p|}\right)
\pm\tau_1(v_{I_1})\wedge\cdots\wedge\tau_1(v_{I_r})
\end{align*}
where $d_k\equiv(-1)^{k-1}(k-1)!$, the summations are over all (unordered) partitions of $[k]$ into $r$ non-empty subsets, and the sign is that acquired in changing the order of the $v's$ from sequential in $v_1\wedge\cdots\wedge{}v_k$ to that in $v_{I_1}\wedge\cdots\wedge{}v_{I_r}$.
In particular, $p_1\circ\tau^{-1}=(-1)^{k-1}(k-1)!\tau_1$, where $p_1$ denotes the projection map $S^*V\longrightarrow{}V$ on the first component.\end{lemma}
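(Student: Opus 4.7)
The plan is to first derive the formula for $\tau$ directly from its definition, then obtain $\tau^{-1}$ by Möbius inversion on the lattice of set partitions of $[k]$, with the explicit coefficient $d_k$ coming from the classical value of that Möbius function.

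For $\tau$: by induction on $r$, using the explicit formula for $\Delta$ given in \S2.1, the iterated coproduct $\Delta^{r-1}(v_1\wedge\cdots\wedge v_k)$ expands as a sum over ordered partitions $(I_1,\ldots,I_r)$ of $[k]$ into non-empty subsets, of terms $\pm\,v_{I_1}\otimes\cdots\otimes v_{I_r}$, with sign the graded Koszul sign from rearranging the $v_i$'s into block order (elements within each block in increasing order). Applying $\tau_1^{\otimes r}$ collapses each factor $v_{I_p}$ to the product $\tau_1(v_{I_p})\in V$. Grouping the $r!$ ordered partitions that share a common underlying unordered partition and invoking the defining formula for $\wedge$ on $S^rV$, one obtains the single wedge $\pm\,\tau_1(v_{I_1})\wedge\cdots\wedge\tau_1(v_{I_r})$; the sign agrees with the one in the lemma because Koszul signs depend only on the total gradings $|\tau_1(v_{I_p})|=\sum_{i\in I_p}|v_i|$.

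For $\tau^{-1}$: for each set partition $\sigma$ of $[k]$ with blocks $I_1,\ldots,I_r$, abbreviate $e_\sigma\equiv\pm\,\tau_1(v_{I_1})\wedge\cdots\wedge\tau_1(v_{I_r})\in S^rV$ with the same sign convention. The formula just proved reads $\tau(e_{\hat 0})=\sum_\sigma e_\sigma$, where $\hat 0$ denotes the finest (singletons) partition. Applying the same identity to $e_\sigma$, regarded as a wedge of the $r$ new graded variables $\tau_1(v_{I_p})$, yields $\tau(e_\sigma)=\sum_{\sigma'\geq\sigma}e_{\sigma'}$, with $\sigma'$ ranging over coarsenings of $\sigma$ in the partition lattice $\Pi_k$. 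Letting $\mu$ denote its Möbius function and setting $x\equiv\sum_\sigma\mu(\hat 0,\sigma)\,e_\sigma$, the defining property of $\mu$ gives $\tau(x)=\sum_{\sigma'}\bigl(\sum_{\hat 0\leq\sigma\leq\sigma'}\mu(\hat 0,\sigma)\bigr)\,e_{\sigma'}=e_{\hat 0}$, so $x=\tau^{-1}(v_1\wedge\cdots\wedge v_k)$.

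The interval $[\hat 0,\sigma]$ in $\Pi_k$ factors as the product, over blocks $I_p$ of $\sigma$, of the full partition lattices $\Pi_{|I_p|}$; together with the classical value $\mu_{\Pi_k}(\hat 0,\hat 1)=(-1)^{k-1}(k-1)!$, this gives $\mu(\hat 0,\sigma)=\prod_p d_{|I_p|}$, matching the stated formula. The final assertion $p_1\circ\tau^{-1}=(-1)^{k-1}(k-1)!\,\tau_1$ is then immediate from the $r=1$ (coarsest) term $\sigma=\hat 1$, for which $e_{\hat 1}=\tau_1(v_1\wedge\cdots\wedge v_k)$. The main bookkeeping obstacle throughout will be tracking graded Koszul signs coherently through the ordered-to-unordered regrouping, the block substitution $v_i\mapsto\tau_1(v_{I_p})$ (for which only total gradings matter), and the sign-neutral Möbius inversion; once those are checked, the content reduces to standard Möbius inversion on $\Pi_k$.
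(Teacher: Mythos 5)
Your proposal is correct, and for the inversion step it takes a genuinely different route from the paper. The treatment of $\tau$ itself is the same in both: expand the iterated reduced coproduct over ordered partitions, apply $\tau_1^{\otimes r}$, and regroup the $r!$ orderings of each unordered partition into a single wedge. For $\tau^{-1}$, the paper substitutes its claimed formula into the formula for $\tau$ and verifies directly that the composite is the identity; the resulting coefficient sum over nested partitions is evaluated by an exponential generating function computation, reducing to the identity $\sum_{j\geq1}\tfrac1{j!}\bigl(\ln(1+x)\bigr)^j=x$. You instead isolate the structural relation $\tau(e_\sigma)=\sum_{\sigma'\geq\sigma}e_{\sigma'}$ (sum over coarsenings, which does follow from applying the $\tau$-formula to $e_\sigma$ viewed as a wedge of the block products, using associativity and commutativity of $m$) and then invoke M\"obius inversion on the partition lattice $\Pi_k$ together with the classical values $\mu(\hat0,\sigma)=\prod_p(-1)^{|I_p|-1}(|I_p|-1)!$. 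The two arguments are equivalent in content --- the paper's generating-function calculation is essentially the standard derivation of $\mu_{\Pi_k}$ --- but yours is more modular and conceptually transparent if one takes the M\"obius function of $\Pi_k$ as known, whereas the paper's is self-contained. One point worth making explicit in your write-up: the conclusion $x=\tau^{-1}(v_1\wedge\cdots\wedge v_k)$ from $\tau(x)=e_{\hat0}$ uses that $\tau$ is invertible (block upper-triangular with identity diagonal blocks, as in Example 2.2), and does not require the $e_\sigma$ to be linearly independent. Both your proof and the paper's leave the Koszul sign coherence through the ordered-to-unordered regrouping and the block substitution at the same level of informality, so this is not a gap relative to the paper.
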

\begin{proof}
The formula for $\tau$ follows immediately from its definition above,
$\tau=\sum_{r=1}^\infty\big((\tau_1)^{\otimes{}r}\circ\Delta^{r-1}\big)$ when combined with the action of the coproduct in \S2.1. To prove the formula for $\tau^{-1}$, it suffices to verify that its composition with $\tau$ is the identity. Let $X$ denote the expression given on the right hand side of the second equation in the statement of the lemma; it suffices to prove that $\tau(X)=v_1\wedge\cdots\wedge{}v_k$. Computing, using the formula for $\tau$,
\[\tau(X)
=\sum\limits_{r=1}^k
\sum\limits_{I_1\cup\cdots\cup{}I_r=[k]}
\sum\limits_{s=1}^r
\sum\limits_{J_1\cup\cdots\cup{}J_s=[r]}
\frac1{r!s!}\left(\prod\limits_{p=1}^rd_{|I_p|}\right)
\pm\tau_1(v_{K_1})\wedge\cdots\wedge\tau_1(v_{K_s})\]
where $K_a\equiv\bigcup_{p\in{}J_a}I_p$.
Here and throughout the rest of this proof, the summations are over ordered partitions and hence extra factors have been introduced (reciprocals of factorials). Interchanging the summations over $s$ and $r$ and extracting the summation over $K_1,\ldots,K_s$ leaves
\begin{align*}
\sum\limits_{s=1}^k\sum\limits_{K_1\cup\cdots\cup{}K_s=[k]}
&\pm\frac1{s!}\tau_1(v_{K_1})\wedge\cdots\wedge\tau_1(v_{K_s})
\cdot\sum\limits_{r=s}^k\frac1{r!}
\sum\limits_{{I_1\cup\cdots\cup{}I_r=[k]\atop{J_1\cup\cdots\cup{}J_s=[r]}}
\atop{}K_a=\bigcup_{p\in{}J_a}I_p}
\left(\prod\limits_{p=1}^rd_{|I_p|}\right)
\end{align*}
the inner summation being over partitions $\{I_p\}$ and $\{J_a\}$ which match given $\{K_j\}$.  It suffices to show that the coefficient on the right hand side vanishes unless $s=k$ and the $K_a$ are singletons when it is 1. For a fixed partition $K_1\cup\cdots\cup{}K_s=[k]$ and any (ordered) partition $\{J_a\}$ of $[r]$, a matching partition $\{I_p\}$ is given by splitting each $K_a$ ($a=1,\ldots,s$) into $|J_a|$ subsets. The coefficient of the term coming from the particular partition $K_1\cup\cdots\cup{}K_s$ is thus
\[
\sum\limits_{r=s}^k\frac1{r!}
\sum\limits_{J_1\cup\cdots\cup{}J_s=[r]}\prod\limits_{a=1}^s\Bigg(
\sum\limits_{{ordered\ partitions\ of}\atop{K_a\ into\ |J_a|\ sets}}\quad\prod\limits_{p=1}^{|J_a|}d_{|\hbox{$p^{th}$ set}|}\Bigg)
\]
The inner sum depends on $\{J_a\}$ only by the orders of the sets, giving
\begin{align*}
&\sum\limits_{r=s}^k\frac1{r!}
\sum\limits_{j_1+\cdots+j_s=r}\frac{r!}{\prod\limits_{a=1}^sj_a!}
\prod\limits_{a=1}^s\Bigg(
\sum\limits_{{ordered\ partitions\ of}\atop{K_a\ into\ j_a\ sets}}\quad\prod\limits_{p=1}^{j_a}d_{|\hbox{$p^{th}$ set}|}\Bigg)
\end{align*}
Interchanging the first two summations leaves
\begin{align*}
&\sum\limits_{j_1,\ldots,j_s\geq1}
\prod\limits_{a=1}^s\Bigg(\frac1{j_a!}
\sum\limits_{{ordered\ partitions\ of}\atop{K_a\ into\ j_a\ sets}}\quad\prod\limits_{p=1}^{j_a}d_{|\hbox{$p^{th}$ set}|}\Bigg)
\end{align*}
which is a product over $a=1,\ldots,s$ of
\[\sum\limits_{j\geq1}\frac1{j!}
\sum\limits_{{ordered\ partitions\ of}\atop{K_a\ into\ j\ sets}}\quad\prod\limits_{p=1}^{j}d_{|\hbox{$p^{th}$ set}|}
=\sum\limits_{j\geq1}\frac1{j!}
\sum\limits_{b_1+\cdots+b_j=|K_a|}\frac{|K_a|!}{\prod_{p=1}^jb_p!}\prod\limits_{p=1}^{j}d_{b_p}\>.\]
Taking out the constant $|K_a|!$ from the summations and using the formula $d_b=(-1)^{b-1}(b-1)!$,
the inner sum is seen to become the coefficient of $x^{|K_a|}$ in $\left(\sum_{b=1}^\infty(-1)^{b-1}\tfrac1bx^b\right)^j=\big(\ln(1+x)\big)^j$ and so the whole expression sums to $|K_a|!$ times the coefficient of $x^{|K_a|}$ in $\sum_{j\geq1}\tfrac1{j!}\big(\ln(1+x)\big)^j=x$ which therefore vanishes unless $|K_a|=1$ and in that case is $1$.
\end{proof}

\subsection{Higher infinitesimal cumulants}
Extend the differential $\d$ on $V$ to a coderivation on $S^*V$ with respect to $\Delta$. Explicitly, define  $\d^\wedge:S^*V\longrightarrow{}S^*V$ by,
\[
 \d^\wedge(v_1\wedge\ldots\wedge{}v_k)\equiv\sum\limits_{i=1}^k
(-1)^{\sum_{j<i}|v_j|}v_1\wedge\ldots\wedge{}\d{}v_i\wedge\ldots\wedge{}v_k
\]
Since $\d$ is a square-zero map of degree $\pm1$, the same follows about $\d^\wedge$. Further,
$\d^\wedge$ is a coderivation on  $S^*V$ with respect to  $\Delta$, meaning that  \[\Delta\circ{}\d^\wedge=(\d^\wedge\otimes\id+\idb\otimes{}\d^\wedge)\circ\Delta
\]
where $\idb:S^*V\longrightarrow{}S^*V$, $\idb(v)=(-1)^{|v|}v$. Since $\tau$ preserves the grading, it commutes with $\idb$. Note that  the extra signs (from $\idb$) in the above equality appear due to the natural Koszul sign and can be subsumed by using an appropriate definition of composition; however we want to make statements as transparent as possible, so we use the symbol $\circ$ to mean the usual functional composition only.

\noindent The conjugation of a coderivation by a coalgebra  isomorphism  is a coderivation, so that $D\equiv{}\tau^{-1}\d^\wedge\tau$ is a square-zero coderivation of $S^*V$. Furthermore, a coderivation $D$ on $S^*V$ is uniquely determined by its projection on the first factor $p_1\circ{}D:S^*V\longrightarrow{}V$. Thus it is determined by its homogeneous components $\d_k\equiv(p_1\circ{}D)|_{S^kV}:S^kV\longrightarrow{}V$, known as the \textbf{Taylor components or coefficients } of $D$.

\begin{definition}
\cite{RS} Write $D\equiv\tau^{-1}\d^\wedge\tau$ and denote its $k^{th}$ Taylor component $S^kV\longrightarrow{}V$ by $\d_k$. We will call $\d_k$ the $k$-bracket induced by the pair of graded commutative associative algebra $(V,m)$ and square-zero map $\d$.
\end{definition}

\noindent The map $\d_{k+1}$ will  be referred to as the $k^{th}$ {\sl infinitesimal cumulant} of $\d$ with respect to the multiplication $m$; this  is  reasonable because  they measure infinitesimally the deviation of the exponential from being a (bijective) product-preserving mapping, as can be seen in the formulae for $\d_k$ given in the following example.

\begin{example} On monomials, $D$ acts by
\begin{align*}
    D(a)&=\d{}a\\
    D(a\wedge{}b)&=\d(ab)-\d{}a\cdot{}b+\d{}a\wedge{}b+(-1)^{|a|\cdot|b|}\big(\d{}b\wedge{}a-\d{}b\cdot{}a\big)
\end{align*}
while $D(a\wedge{}b\wedge{}c)$ is a sum of 19 terms. Its Taylor coefficients are
\begin{align*}
    \d_1(a)&=\d{}a\\
    \d_2(a\wedge{}b)&=\d(ab)-\d{}a\cdot{}b-(-1)^{|a|\cdot|b|}\d{}b\cdot{}a\\
    \d_3(a\wedge{}b\wedge{}c)&=\d(abc)-\d(ab)\cdot{}c-(-1)^{|b|\cdot|c|}\d(ac)\cdot{}b
    -(-1)^{|a|(|b|+|c|)}\d(bc)\cdot{}a\\
    &\quad+(-1)^{(|a|+|b|)|c|}\d{}c\cdot{}ab+(-1)^{|a|\cdot{}|b|}\d{}b\cdot{}ac+\d{}a\cdot{}bc
\end{align*}
For example, $\d_2$ is the first infinitesimal cumulant and expresses the (first) deviation from Leibniz, equivalently $\d_2(a\wedge{}b)=\d(ab)-\d{}a\cdot{}b-(-1)^{|a|}a\cdot{}\d{}b$.
\end{example}

\begin{lemma}\cite{RS} The map $D$ is a square-zero coderivation of $S^*V$ and its $k^{th}$  Taylor coefficient is given by the following formula where $\pm^I$ is as defined in (1)
\[\d_k(v_1\wedge\cdots\wedge{}v_k)
=\sum_{r=1}^k(-1)^{k-r}
\sum\limits_{|I|=r}\pm^I\ {}\d(\tau_1(v_I))\cdot\tau_1(v_{I^c})\eqno{(2)}\]
Here the inner summation is over order $r$ subsets $I\subseteq[k]$.
\end{lemma}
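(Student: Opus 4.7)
The coderivation and square-zero properties of $D$ are essentially formal and I would dispose of them first. Since $\tau$ is a coalgebra isomorphism preserving grading, in particular commuting with $\idb$, one computes $\Delta\circ D = \Delta\circ\tau^{-1}\d^\wedge\tau = (\tau^{-1}\otimes\tau^{-1})\Delta\d^\wedge\tau = (\tau^{-1}\otimes\tau^{-1})(\d^\wedge\otimes\id+\idb\otimes\d^\wedge)(\tau\otimes\tau)\Delta = (D\otimes\id+\idb\otimes D)\Delta$, and $D^2=\tau^{-1}(\d^\wedge)^2\tau=0$. The substance is the explicit formula (2), which I would derive by direct evaluation of $p_1\circ\tau^{-1}\circ\d^\wedge\circ\tau$ on $S^kV$.

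For that, I would first expand $\tau(v_1\wedge\cdots\wedge v_k)$ using Lemma 2.4 as a signed sum over unordered partitions $[k]=I_1\cup\cdots\cup I_r$ of $\pm\tau_1(v_{I_1})\wedge\cdots\wedge\tau_1(v_{I_r})$, then apply $\d^\wedge$ termwise (picking out one factor $\tau_1(v_{I_j})$ on which $\d$ acts, with the standard Koszul sign), and finally project onto $V$. The decisive simplification is the second formula of Lemma 2.4, namely $p_1\circ\tau^{-1}|_{S^rV}=(-1)^{r-1}(r-1)!\,\tau_1$, so that the projection merely multiplies the wedge factors together, paying a scalar $(-1)^{r-1}(r-1)!$. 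I would then use graded commutativity to push $\d(\tau_1(v_{I_j}))$ to the front of each product and rearrange the remaining factors into the order of $I^c$, at which point the resulting sum reorganizes naturally by the nonempty subset $I=I_j\subseteq[k]$ on which $\d$ eventually acts.

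For a fixed such $I$, the contributions to $\d(\tau_1(v_I))\cdot\tau_1(v_{I^c})$ are in bijection with unordered partitions of $I^c$ into $s=r-1$ nonempty blocks, each carrying the scalar $(-1)^s s!$ from the projection; there are $S(|I^c|,s)$ such partitions, where $S$ is the Stirling number of the second kind. So the total scalar coefficient on $\d(\tau_1(v_I))\cdot\tau_1(v_{I^c})$ reduces to $\sum_{s=0}^{m}(-1)^s s!\,S(m,s)$ where $m=|I^c|=k-|I|$. The final step is the identity
\[
\sum_{s=0}^{m}(-1)^s s!\,S(m,s)=(-1)^m,
\]
which matches the required factor $(-1)^{k-|I|}$ in (2). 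This identity is immediate from the generating function $\sum_{m\geq s}S(m,s)\tfrac{x^m}{m!}=\tfrac{1}{s!}(e^x-1)^s$: summing with weights $(-1)^s s!$ gives $\sum_s(1-e^x)^s=e^{-x}$, whose coefficient of $x^m/m!$ is $(-1)^m$. This is in the same spirit as the $\ln(1+x)$ identity used in the proof of Lemma 2.4.

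The genuine obstacle is the sign bookkeeping. Three separate sources of signs arise: the permutation sign $\pm^{(\sigma,x)}$ embedded in the expansion of $\tau$ via Lemma 2.4, the Koszul sign $(-1)^{\sum_{p<j}|\tau_1(v_{I_p})|}$ produced by $\d^\wedge$, and the graded commutativity signs incurred when moving $\d(\tau_1(v_{I_j}))$ to the left and permuting the surviving factors into the order of $I^c$. The claim is that these three signs combine, for every choice of partition and distinguished block, to exactly $\pm^I$ as defined in (1), namely $\prod_{i\in I,\,j\in I^c,\,i>j}(-1)^{|v_i|\cdot|v_j|}$. I would verify this by writing the composite sign as a product over inversions between $I$ and $I^c$ in the concatenation $I_1,\ldots,I_r$ (noting that inversions purely within $I$ or purely within $I^c$ cancel against the reordering of the remaining product) and reducing to the sign acquired when interleaving $I$ with $I^c$, which is precisely (1). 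This step is routine but the bulk of the careful work; the combinatorial identity above is what produces the elegance of the final formula (2).
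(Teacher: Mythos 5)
Your proposal is correct and follows essentially the same route as the paper's proof: expand $\tau$ via the explicit partition formula of Lemma 2.3, apply $\d^\wedge$, project using $p_1\circ\tau^{-1}=(-1)^{r-1}(r-1)!\,\tau_1$, regroup the sum by the distinguished block $I$ on which $\d$ acts, and reduce the remaining coefficient to a combinatorial identity. The only divergence is that you state that identity over unordered partitions via Stirling numbers, $\sum_s(-1)^s s!\,S(m,s)=(-1)^m$, proved by the exponential generating function $\tfrac1{s!}(e^x-1)^s$, whereas the paper works with ordered partitions and surjection counts $N_{m,t}=t!\,S(m,t)$ and proves the equivalent identity by inclusion--exclusion plus a polynomial recursion; your generating-function derivation is the slicker of the two, and both treatments leave the final sign verification at the same level of (acceptable) terseness.
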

\begin{proof}
By Definition 2.4, $D=\tau^{-1}\circ{}\d^\wedge\circ\tau$ from which it follows immediately that $D$ has square zero and is a coderivation. Its $k^{th}$ Taylor coefficient is $\d_k=p_1\circ{}D=p_1\circ\tau^{-1}\circ{}\d^\wedge\circ\tau$ on $S^kV$,
which we compute from Lemma 2.3 for $\tau$,
\begin{align*}
\d_k(v_1\wedge\cdots\wedge{}v_k)
&=(p_1\circ\tau^{-1}\circ\d^\wedge)\sum\limits_{r=1}^k
\sum\limits_{I_1\cup\cdots\cup{}I_r=[k]}
\pm\tau_1(v_{I_1})\wedge\cdots\wedge\tau_1(v_{I_r})
\end{align*}
where the sum is over all (unordered) partitions of $[k]$ into $r$ non-empty subsets, and the sign is that acquired in changing the order of the $v$'s from sequential in $v_1\wedge\cdots\wedge{}v_k$ to that in $v_{I_1}\wedge\cdots\wedge{}v_{I_r}$. Equivalently, introducing a factor $\tfrac1{r!}$, one may sum over ordered partitions of $[k]$ into $r$ non-empty subsets, which we now do. By the definition of $\d^\wedge$ above, this expands as
\[
(p_1\circ\tau^{-1})\sum\limits_{r=1}^k
\sum\limits_{I_1\cup\cdots\cup{}I_r=[k]}\sum\limits_{p=1}^r
\pm\frac1{r!}\d\big(\tau_1(v_{I_p})\big)\wedge\tau_1(v_{I_1})\wedge\cdots
\widehat{\tau_1(I_p)}\wedge\cdots\wedge\tau_1(v_{I_r})
\]
where the hat indicates a missing term, and the sign is that acquired by the (new) change in order of the $v$'s, where $v_{I_p}$ is now listed first. Applying Lemma~2.3 for $\tau^{-1}$, this becomes
\begin{align*}
&\sum\limits_{r=1}^k
\sum\limits_{I_1\cup\cdots\cup{}I_r=[k]}\sum\limits_{p=1}^r
\pm(-1)^{r-1}\frac1r\cdot\d\big(\tau_1(v_{I_p})\big)\tau_1(v_{I_p^c})\\
&=\sum\limits_{I\subseteq[k]\atop{}I\not=\emptyset}\sum\limits_{r=1}^k\sum\limits_{p=1}^r
\sum\limits_{{I_1\cup\cdots\cup{}I_r=[k]}\atop{}I_p=I}
\pm^I(-1)^{r-1}\frac1r\cdot\d\big(\tau_1(v_{I})\big)\tau_1(v_{I^c})\\
&=\sum\limits_{I\subseteq[k]\atop{}I\not=\emptyset}\sum\limits_{r=1}^{k+1-|I|}
(-1)^{r-1}N_{k-|I|,r-1}\cdot\pm^I\d\big(\tau_1(v_{I})\big)\tau_1(v_{I^c})
\end{align*}
where $N_{s,t}$ denotes the number of ordered partitions of a set of order $s$ into $t$ non-empty sets, or equivalently the number of onto maps $[s]\longrightarrow[t]$.

\noindent To complete the proof of the lemma, we need to show that the inner sum is $(-1)^{k-|I|}$, that is that
$\sum_{r=1}^{s+1}(-1)^{r-1}N_{s,r-1}=(-1)^s$ for $s\geq0$.
For $s=0$, this is $N_{0,0}=1$ while for $s>0$, it is the identity
\[N_{s,s}-N_{s,s-1}+\cdots\pm{}N_{s,1}=1\eqno{(3)}\]
For completeness, we include a proof. By the inclusion-exclusion principle, we can count onto maps $[s]\longrightarrow[t]$ by counting all maps ($t^s$) and excluding those in the union of $t$ sets, namely those whose image does not include $1,2,\ldots,t$, respectively. This gives
\[N_{s,t}=t^s-t\cdot(t-1)^s+\binomial{t}{2}(t-2)^s-\cdots
=\sum\limits_{k=1}^t(-1)^{t-k}\binomial{t}{k}k^s\]
Taking an alternating sum and interchanging the summations now gives
\[
\sum\limits_{t=1}^s(-1)^{s-t}N_{s,t}
=\sum\limits_{k=1}^s\sum\limits_{t=k}^s(-1)^{s-k}\binomial{t}{k}k^s
=\sum\limits_{k=1}^s(-1)^{s-k}\binomial{s+1}{k+1}k^s\]
which is the coefficient of $x^s$ in $(1-x)^{s+1}\sum\limits_{k=1}^\infty{}k^sx^k
=(1-x)^{s+1}\cdot\Big(x\tfrac{d}{dx}\Big)^s\left(\tfrac1{1-x}\right)$.
Denote this expression by $p_s(x)$ so that
$\Big(x\tfrac{d}{dx}\Big)^s\left(\tfrac1{1-x}\right)
=\tfrac{p_s(x)}{(1-x)^{s+1}}$ and observe that
$x\frac{d}{dx}\left(\frac{p_{s-1}(x)}{(1-x)^s}\right)
=\frac{sx\cdot{}p_{s-1}(x)+x(1-x)\cdot{}p_{s-1}'(x)}{(1-x)^{s+1}}$
so that $p_0(x)=1$ while $p_s=sx\cdot{}p_{s-1}+x(1-x)p_{s-1}'$. We see inductively that $p_s$ is a polynomial of degree $s$. The recurrence relation on $p_s$ induces one on its leading coefficient $a_s$, namely $a_s=sa_{s-1}-(s-1)a_{s-1}=a_{s-1}$ while $a_0=1$. Thus $a_s=1$ for all $s$.  This completes the proof of (3).
\end{proof}

\begin{lemma} Fixing the last $(k-2)$ components, $\d_k$ as a map $V\otimes{}V\longrightarrow{}V$ is the commutator of $\d_{k-1}$ (as a map $V\longrightarrow{}V$) with multiplication; that is,
\begin{align*}
\d_k(v_1\wedge{}v_2\wedge{}v_3\wedge\ldots\wedge{}v_k)
=&\d_{k-1}(v_1v_2\wedge{}v_3\wedge\ldots\wedge{}v_k)\\
&-(-1)^{|v_1|}v_1\cdot{}\d_{k-1}(v_2\wedge{}v_3\wedge\ldots\wedge{}v_k)\\
&-(-1)^{|v_2|(1+|v_1|)}v_2\cdot{}\d_{k-1}(v_1\wedge{}v_3\wedge\ldots\wedge{}v_k)
\end{align*}
\end{lemma}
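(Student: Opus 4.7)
The plan is to expand both sides of the asserted identity using formula~(2) of Lemma~2.6 and then match the resulting terms via a natural indexing bijection. On the left-hand side I would write $\d_k(v_1\wedge\cdots\wedge v_k)=\sum_{\emptyset\ne I\subseteq[k]}(-1)^{k-|I|}\pm^I\d(\tau_1(v_I))\cdot\tau_1(v_{I^c})$ and partition the sum into four cases according as $I\cap\{1,2\}$ equals $\{1,2\}$, $\{1\}$, $\{2\}$, or $\emptyset$. On the right-hand side I would apply~(2) to each of the three $\d_{k-1}$ terms, producing sums over nonempty $J\subseteq[k-1]$, and split each into the sub-cases $1\in J$ and $1\notin J$. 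Under the obvious shift bijection, in which index~$1$ of $[k-1]$ represents the merged variable ($v_1v_2$, $v_1$, or $v_2$ according to which term) and $j\ge 2$ represents $v_{j+1}$, each sub-case corresponds to exactly one of the four LHS cases.

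The case $\{1,2\}\subseteq I$ is hit only by the $1\in J$ portion of $\d_{k-1}(v_1v_2\wedge\cdots)$: here $|I|=|J|+1$, so $(-1)^{(k-1)-|J|}=(-1)^{k-|I|}$, matching the LHS exponent. The cases $I\cap\{1,2\}=\{1\}$ and $I\cap\{1,2\}=\{2\}$ are hit only by the $1\in J$ portions of $-v_2\cdot\d_{k-1}(v_1\wedge\cdots)$ and $-v_1\cdot\d_{k-1}(v_2\wedge\cdots)$ respectively; here $|I|=|J|$, so $(-1)^{(k-1)-|J|}=-(-1)^{k-|I|}$, and the explicit minus in front of the $v_i$ factor supplies the compensating sign. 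The case $I\cap\{1,2\}=\emptyset$ receives contributions from all three RHS terms in their $1\notin J$ parts, again with $|I|=|J|$: the first contributes $-(-1)^{k-|I|}$ times the common expression and the other two each contribute $+(-1)^{k-|I|}$, summing to $(-1+1+1)(-1)^{k-|I|}=(-1)^{k-|I|}$, matching the LHS.

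The main obstacle will be the graded-sign bookkeeping. Setting $I'\equiv I\setminus\{1,2\}$, the pairwise interactions in $\pm^I$ between indices in $I\cap\{1,2\}$ and indices in $I'$ (or in $I^c$) produce exactly the factors $\prod_{i\in I'}(-1)^{|v_i||v_1|}$ and/or $\prod_{i\in I'}(-1)^{|v_i||v_2|}$ needed to convert between $\pm^I$ and the sign $\pm^J$ computed with the merged variable of grading $|v_1|+|v_2|$, $|v_1|$, or $|v_2|$. The prefactors $(-1)^{|v_1|}$ and $(-1)^{|v_2|(1+|v_1|)}$ in the lemma statement are precisely those needed, combined with graded commutativity, to commute $v_1$ and $v_2$ past the degree-shifted element $\d(\tau_1(v_I))$ and bring all products into the standard order. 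The $k=2$ specialization reproduces Example~2.5 after one application of graded commutativity, providing a useful convention check before grinding out the general case-by-case verification.
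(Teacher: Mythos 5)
Your proposal is correct and follows essentially the same route as the paper's own proof: both expand each side via the formula of Lemma~2.6, split the left-hand sum into four pieces according to $I\cap\{1,2\}$ and each right-hand $\d_{k-1}$-sum into two pieces according to whether the merged index lies in $J$, and then match the pieces (the paper records this as the three right-hand terms equalling $-\Sigma_1+\Sigma_2$, $\Sigma_1+\Sigma_4$, $\Sigma_1+\Sigma_3$). Your sign bookkeeping, including the $(-1)^{k-|I|}$ versus $(-1)^{(k-1)-|J|}$ comparison, is consistent with the statement and is in fact more explicit than what the paper writes down.
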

\begin{proof}
By Lemma 2.6, the left hand side can be written as a sum of terms indexed by non-empty subsets $I\subseteq[k]$. This sum can be split according as 1 and/or 2 are elements of $I$,
\[\d_k(v_1\wedge\cdots\wedge{}v_k)
=\Big(\sum\limits_{{I\subseteq[k],I\not=\emptyset}\atop{1,2\not\in{}I}}
+\sum\limits_{{I\subseteq[k]}\atop{1,2\in{}I}}
+\sum\limits_{{I\subseteq[k]}\atop{1\in{}I,2\not\in{}I}}
+\sum\limits_{I\subseteq[k]\atop{2\in{}I,1\not\in{}I}}\Big)
\pm^I(-1)^{k-|I|}\d(v_I)\cdot{}v_{I^c}
\]
Here we omitted the symbols $\tau_1$ denoting repeated multiplication, which is being written as concatenation. Denote these four sums $\Sigma_1$, $\Sigma_2$, $\Sigma_3$, $\Sigma_4$ respectively. Similarly, each term on the right hand side of the equality to be proved can be expanded using lemma 2.6 as a sum of $2^{k-1}-1$ terms,  over non-empty subsets of a $(k-1)$-element set, and further each of these sums can be split into two according as the subset does not or does contain the first element. This gives the three terms on the RHS of the equality to be proved as, $-\Sigma_1+\Sigma_2$, $\Sigma_1+\Sigma_4$ and $\Sigma_1+\Sigma_3$, respectively.
\end{proof}
This lemma will be useful in proving the inductive steps when we come to compute $\d_k$ on the lattice in \S{4} and \S{5}.

\begin{definition}
When the $k^{th}$ infinitesimal cumulant of $\d$ vanishes (that is, $\d_{k+1}=0$) we say that $\d$ has Grothendieck order at most $k$ with respect to $m$.
\end{definition}
Extending $\d_k:S^kV\longrightarrow{}V$ to a coderivation on $S^*V$ yields a map $\d_k^\wedge:S^*V\longrightarrow{}S^*V$ which more precisely acts as $\d_k^\wedge:S^rV\longrightarrow{}S^{r-k+1}V$ and has $D=\sum_{k=1}^\infty{}\d_k^\wedge$. The general formula for the action of $\d_n^\wedge$ is (where $\pm^I$ is as defined in (1))
\[
\d_n^\wedge(v_1\wedge\cdots\wedge{}v_k)
=\sum\limits_{I\subset[k]\atop{}|I|=n}\pm^I\d_n(v_I)\wedge{}v_{I^c}\eqno{(4)}
\]

The map $\d_k^\wedge:S^*V\longrightarrow{}S^*V$ has Grothendieck order at most $k$ with respect to the multiplication $\wedge$ on $S^*V$.

\begin{example}
 $\d_1^\wedge=\d^\wedge$, while $\d_2^\wedge$ acts on monomials up to third order by
\begin{align*}
    \d_2^\wedge(a)&=0\\
    \d_2^\wedge(a\wedge{}b)
    &=\d(ab)-\d{}a\cdot{}b-(-1)^{|a|\cdot|b|}\d{}b\cdot{}a\\
    \d_2^\wedge(a\wedge{}b\wedge{}c)
    &=\d(ab)\wedge{}c+(-1)^{|a|}a\wedge{}\d(bc)
    +(-1)^{|b|\cdot|c|}\d(ac)\wedge{}b\\
    &\quad-\d{}a\cdot{}b\wedge{}c
    -(-1)^{|b|\cdot|c|}\d{}a\cdot{}c\wedge{}b
    -(-1)^{|a|\cdot|b|}\d{}b\cdot{}a\wedge{}c\\
    &\quad-\!(\!-\!1)^{|a|}a\!\wedge\!{}\d{}b\cdot{}c
    \!-\!(\!-\!1)^{(|a|+|b|)|c|}\d{}c\cdot{}a\!\wedge\!{}b
    \!-\!(\!-\!1)^{(|a|+|b|)|c|}a\!\wedge\!{}\d{}c\cdot{}b\\
    &=\d_2(a\wedge{}b)\wedge{}c+(-1)^{|a|}a\wedge{}\d_2(b\wedge{}c)
    +(-1)^{|b|\cdot|c|}\d_2(a\wedge{}c)\wedge{}b
\end{align*}
\end{example}
 In conclusion, we have the following general structure (see the beautiful paper \cite{Koszul}) to be estimated in  \S\S4,5,7 for the examples of \S3.
\begin{theorem} For any graded commutative associative algebra $(V,m)$ with square-zero map $\d$ of grading $\pm1$, $S^*V$ is a cocommutative coassociative coalgebra with maps $\d_k^\wedge:S^rV\longrightarrow{}S^{r-k+1}V$ as defined above, for which  $D=\sum_{k=1}^\infty{}\d_k^\wedge$ is a coderivation of square zero while each $\d_k^\wedge$ has Grothendieck order at most $k$ with respect to $\wedge$ and $\d_1^\wedge=\d^\wedge$. The Taylor coefficients of $D$ are maps $\d_k:S^kV\longrightarrow{}V$ which define $k^{th}$ order multi-brackets and describe the $(k-1)^{th}$  infinitesimal cumulant of $\partial$ (higher order deviation from Leibniz).
\end{theorem}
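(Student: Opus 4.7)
The theorem collects the structural facts built up through \S\S2.1--2.3, with the one genuinely new assertion being the Grothendieck-order bound on each $\d_k^\wedge$. My plan is to assemble the pieces in order and leave the bound for last.

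First I would invoke \S2.1 to get cocommutativity and coassociativity of $(S^*V,\Delta)$, which depends on the sign identity $\pm^I\pm^{I^c}=(-1)^{|v_I|\cdot|v_{I^c}|}$. The lifted differential $\d^\wedge$ is square-zero, since $\d^2=0$ together with the Koszul signs in its definition forces the cross terms to cancel in pairs, and $\d^\wedge$ is a $\Delta$-coderivation by the explicit identity displayed in \S2.3. Because $\tau$ is a grading-preserving coalgebra automorphism (Example~2.2 and Lemma~2.3), the conjugate $D=\tau^{-1}\d^\wedge\tau$ inherits both properties, so $D$ is a square-zero coderivation on $S^*V$. A coderivation on $S^*V$ is determined by its Taylor components $\d_k\equiv p_1\circ D|_{S^kV}$, and conversely any sequence $\d_k\colon S^kV\longrightarrow V$ lifts uniquely to a coderivation via formula (4); this produces the decomposition $D=\sum_{k\ge1}\d_k^\wedge$ with $\d_k^\wedge\colon S^rV\longrightarrow S^{r-k+1}V$. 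Specialising Lemma~2.6 at $k=1$ gives $\d_1=\d$, and formula (4) at $k=1$ reproduces the original definition of $\d^\wedge$, so $\d_1^\wedge=\d^\wedge$.

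Next I would prove the Grothendieck-order claim: $(\d_k^\wedge)_{k+1}=0$ as a map on $(S^*V,\wedge)$. Iterating Lemma~2.7 applied to the triple $(S^*V,\wedge,\d_k^\wedge)$ expresses the $(k+1)$-bracket, up to Koszul signs, as a $k$-fold iterated graded commutator of $\d_k^\wedge$ with left-wedge-multiplication operators. By formula (4), $\d_k^\wedge$ acts on a wedge of $r$ generators by summing over $k$-element subsets of positions. Expanding $(\d_k^\wedge)_{k+1}(a_1\wedge\cdots\wedge a_{k+1})$ for $a_1,\ldots,a_{k+1}\in S^*V$ using inclusion-exclusion, the Leibniz subtractions cancel every contribution whose $k$-subset $I$ of the concatenated positions misses some factor $a_i$, leaving only those $I$ that meet every one of the $k+1$ disjoint groups nontrivially. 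Since each group must then supply at least one element of $I$, one would need $|I|\ge k+1$, contradicting $|I|=k$. The sum is empty, and $(\d_k^\wedge)_{k+1}=0$ as required.

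The routine parts follow directly from the lemmas already proved. The main conceptual step is the pigeonhole/intersection argument in the previous paragraph, which is the graded-commutative analogue of the classical fact that a $k$-th order differential operator on a polynomial algebra has vanishing $(k+1)$-fold commutator with multiplications. Tracking the Koszul signs through the inclusion-exclusion is the only technical nuisance, but the pigeonhole conclusion is sign-independent, so the sign bookkeeping does not obstruct the argument.
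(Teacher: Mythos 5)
Your proposal is correct, and for most of the statement it follows the same route as the paper: Theorem 2.10 is presented there as a summary (``In conclusion, we have the following general structure\dots'') whose ingredients --- cocommutativity and coassociativity of $\Delta$ from \S2.1, the square-zero coderivation property of $D=\tau^{-1}\d^\wedge\tau$ from Lemma 2.6, the Taylor-coefficient decomposition $D=\sum_k\d_k^\wedge$ via formula (4), and $\d_1^\wedge=\d^\wedge$ --- are exactly the pieces you assemble. The one place you genuinely diverge is the Grothendieck-order bound: the paper merely asserts, just before Example 2.9, that $\d_k^\wedge$ has order at most $k$ with respect to $\wedge$, and Example 2.9 only illustrates the shape of $\d_2^\wedge$ on low-degree monomials; no proof is given. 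Your inclusion--exclusion/pigeonhole argument supplies that missing proof, and it is the right one: collecting, for a fixed $k$-element set $S$ of generator positions, all terms of the expansion indexed by subsets $I\supseteq\mathrm{supp}(S)$ gives the coefficient $\sum_{I\supseteq\mathrm{supp}(S)}(-1)^{(k+1)-|I|}=(1-1)^{(k+1)-|\mathrm{supp}(S)|}$, which vanishes unless $S$ meets all $k+1$ groups, impossible for $|S|=k$. Two small points to tighten in a final write-up: first, your opening sentence about iterating Lemma 2.7 is not actually what you use --- the argument runs directly off the closed formula (2) applied to the algebra $(S^*V,\wedge)$ and the (not necessarily square-zero) operator $\d_k^\wedge$, and you should note that formula (2) is valid for an arbitrary linear map of homogeneous degree, since its derivation never uses $\d^2=0$; second, the claim that terms indexed by different $I$ ``produce the same element with consistent signs'' is the Koszul coherence $\pm^I\cdot(\pm\hbox{ internal to }a_I)=\pm^S$ and deserves one explicit line, though it does hold. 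Neither point affects the validity of the argument.
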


\section{The geometry of the lattice chain algebra  and  the  lattice cochain algebra}
\textbf{Discretization of the continuum algebra in dimension three.}
    In  \cite{S}, one builds the vector calculus of the  lattice model defined by all of the $2h$ cubes in the $h$ cubical lattice of $\R^3$, from all of these larger cubes, their faces, their edges and their vertices  by forming  vector spaces $C_k$  which are ``up to sign" generated by these so-called $k$-cubes, for
 $k = 3,2,1$ and $0$.  For  $k = 0$, $C_k$  is actually  generated by  all of the $h$-lattice points  and for   $k= 1,2,3$,  $C_k$  is  generated by \textbf{oriented} $k$-cubes of edge length $2h$ modulo the relations
 ($k$-cube, orientation) = --($k$-cube, opposite orientation). These are  the {\sl chains}.

 One obtains finite dimensionality by boundary conditions, here  by identifying periodically, with period which is the same  power of two in each direction.

 The dual spaces $C^k$, consisting of linear functionals on these finite dimensional  $C_k$,   are called  {\sl cochains}. There is also a star operator $*$ on each, changing $k$ to $3-k$, and if one identifies chains and cochains using the basis, then star conjugates $\d$ to $\delta$ and {\it vice versa} since  $*^2=1$. See the upper row of the  figure.
\[
\includegraphics[width=.12\textwidth]{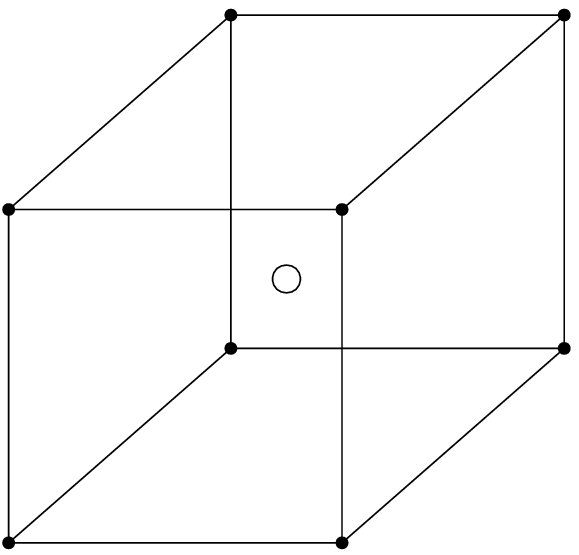}\hskip4em\includegraphics[width=.15\textwidth]{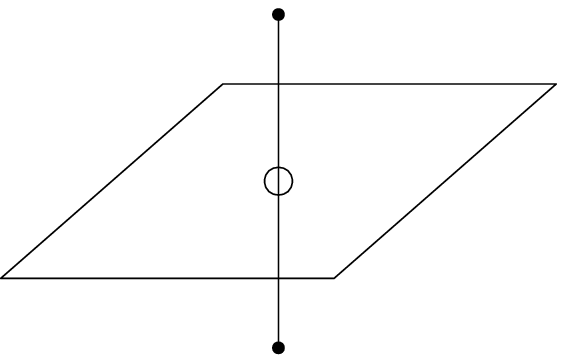}
\]
 \[\includegraphics[width=.13\textwidth]{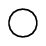}\hskip2em\includegraphics[width=.13\textwidth]{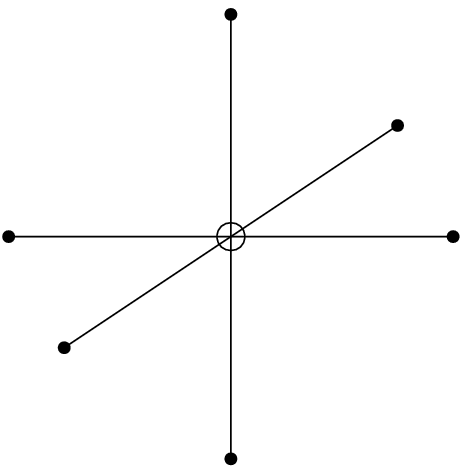}\hskip2em\includegraphics[width=.16\textwidth]{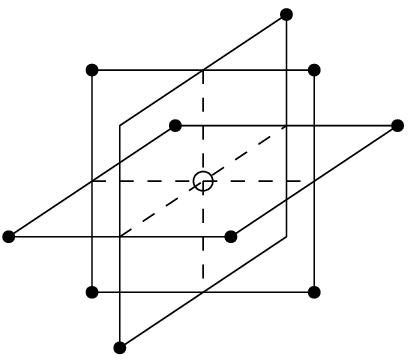}\hskip2em
\includegraphics[width=.13\textwidth]{200114-cube-2.eps}\]

 \textbf{The main  point of the choice of  combinatorics.}
 \noindent Firstly, there  is for  cochains,  defined on
 $k$-cubes of side length $2h$,  a  graded commutative and associative product which makes  use of the linear algebra of  alternating tensors. This is so because at each  lattice point there are exactly (1, 3, 3, 1)   $k$-cubes  for $k=0,\ 1,\ 2,\ 3$,  of edge length $2h$ with centre point at that grid point. See the second row of the figure. This gives a direct sum decomposition of the $2h$-cochains on the $h$-lattice  into the exterior algebras of the cotangent spaces of $\R^3$ at the lattice points. There is then the obvious direct sum  exterior algebra structure on cochains, which is graded commutative and associative.

  The operator $\delta$ is not  a derivation of the product, which  relation  only appears  on taking the calculus limit. The error is order $h$, because there are  $h$-shifts in the  true Leibniz type  formula for $\delta$ of a product.

Secondly, the  $h$-lattice chains generated by the bigger $k$-cubes of edge size $2h$    have a direct sum decomposition into the exterior algebras on the tangent spaces at the lattice points. Thus the chains  also have a  graded commutative and associative algebra structure. This product  is not  familiar in  combinatorial or algebraic  topology but its continuum analogue appears in differential geometry.

    One considers for chains,  as one  does for  continuum  multivector fields,  a second product or bracket  [ , ], defined  as   the deviation of $\d$ on chains from being a derivation of the   commutative and associative product on chains. Tautologically,  $\d$ is a derivation of this  bracket  because $\d^2=0$. Note this means the  bracket  [~,~] of two cycles is not only a cycle but is  canonically a boundary, being $\d$ of the exterior product of the two cycles. (See Appendix for the continuum discussion.)

{\bf Discretisation in $n$-dimensions.} Consider an $n$-dimensional periodic cubic lattice with grid step $h$ and a size divisible by four in each direction. That is, the vertices are  $L_h=(h\Z)^n/(4Nh\Z)^n$ which is naturally bicoloured by $\frac1h\sum_{i=1}^nx_i$ modulo 2 at $(x_i)\in{}L_h$. This bicolouring splits the lattice as a union of two interlocking  lattices, here pictured for $n=3$.
\[\includegraphics[width=.25\textwidth]{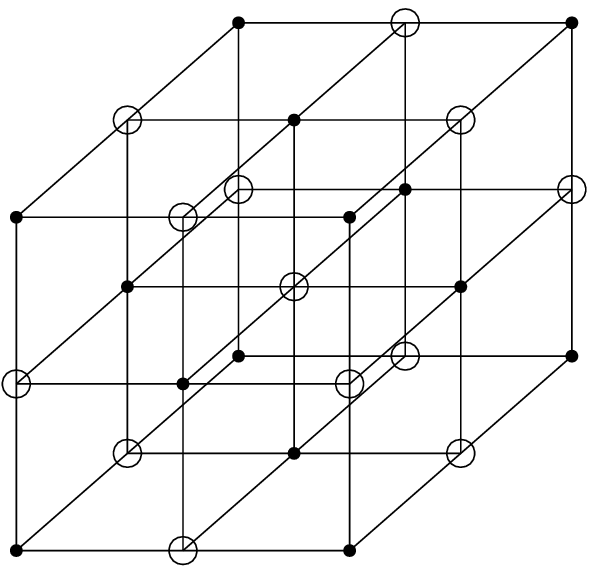}
\]
Consider the subcomplex of the chain complex of all cellular chains, in which $r$-chains are generated by $r$-dimensional cubes of edge length $2h$ (whose vertices all have the same colour). Such an  $r$-cube is specified by its centre, a point in $L_h$, and its type, the subset of $[n]$ consisting of those directions for which the projection of the cube on the corresponding axis is an interval of length $2h$ (rather than a point).  Since these cubes are Cartesian products of intervals, choosing an ordering on $[n]$ will induce natural orientations on the cubes.

Let $C$ denote the graded vector space of chains, graded by dimension, with basis $\{I_a\}$, where $I_a$ labels the cell of type $I\subset[n]$ whose centre is at lattice point $a$.
Denote by $Ia$ the translation of $a$ forward by $h$ in each of the directions in $I$ and by $I'a$, the translation of $a$ backward by $h$ in each of the directions in $I$. By abuse of notation, when $I$ is a singleton $\{u\}$, we will write $Ia$ and $I'a$ as $ua$ and $u'a$, respectively.
The geometric boundary is given on generators by
\[
\d(I_a)=\sum\limits_{u\in{}I}\pm^u_I\left((I\bs{}u)_{ua}-(I\bs{}u)_{u'a}\right)
\]
where the sign $\pm^u_I$ is such that $u\cdot(I\bs{}u)=\pm^u_I\cdot{}I$, that is $\pm^u_I=(-1)^{\#\{i\in{}I|i<u\}}$. The homology of this chain complex has dimension $2^n$ in degree zero because of the way we have constructed cells of edge length $2h$.

\medskip\noindent{\bf The chain algebra.} The vector space of chains $C$ naturally splits as a direct sum $C=\oplus_{a\in{}L_h}C_a$ where $C_a=\wedge^*\R^n$ consists of those chains which are combinations of cubical cells centred at $a$. The (pointwise) wedge product makes $C$ into a graded commutative associative algebra.

A generic chain can be written as \[\sum\limits_{I,a}f_I(a)\cdot{}I_a\] where we combine the coefficients $f_I(a)$ of the basis chains $I_a$ into scalar-valued functions $f_I$ on lattice points, defined for all $2^n$ subsets $I$ of $[n]$.

\noindent{\bf The cochain algebra.} Let $A$ denote the graded vector space of
scalar-valued cochains on the lattice (vector space dual to $C$). An element of $A$ is (determined by) a function $f$
which evaluates on chains $I_a$ to a scalar $f(I_a)$, or equivalently $2^n$ functions $f_I$, each a
scalar-valued function on the lattice (here $f_I(a)\equiv{}f(I_a)$). One could write the
corresponding cochain as
\[
\sum\limits_{I}f_IdI
\]
using the notation of differential forms. The graded vector space $A$ splits naturally as
$A=\oplus_{a\in{}L_h}A_a$ where each $A_a=\wedge^*\R^n$ consists of the cochains supported on
cells centred at $a$. Use the pointwise wedge product to define a multiplication, $m$, on $A$;
on zero-cochains it gives pointwise multiplication of scalar valued functions on the lattice.
Thus if $f,g\in{}A$, their product $f.g\equiv{}m(f,g)$ is defined by
\[
(f.g)(I_a)=\sum\limits_{J\cup{}K=I}\pm_{J,K}f(J_a)g(K_a)\eqno{(5)}
\]
where the sum is over all ordered pairs of disjoint sets $J$, $K$ for which $J\cup{}K=I$.
The sign $\pm_{J,K}$ is defined so that $J\cdot{}K =\pm_{J,K}(J\cup{}K)$, that is,
$\pm_{J,K}=(-1)^{\#\{(j,k)|j\in{}J,k\in{}K,j>k\}}$.
If $f$ is an $r$-cochain and $g$ is an $s$-cochain, then $f.g$ will be an $(r+s)$-cochain while
$g.f=(-1)^{rs}f.g$. So $m$ is a (graded) commutative associative product on $A$.

\noindent Since the cubic lattice is self-dual, there is a natural correspondence between the chain
complex and the cochain complex. Thus operators can be interchangeably considered as acting on the chain complex or the cochain complex. In Lemma 3.1, we choose to write all the operators as acting on cochains.

\medskip\noindent{\bf The star operator $*$.} The star operator acts on basis elements by
\[
*(dI)=\pm_{I,I^c}{}dI^c
\]
where $I^c$ denotes the complement $[n]\bs{}I$, so that $(dI)\wedge*(dI)=d[n]$. Since $\pm_{I,I^c}\cdot\pm_{I^c,I}=(-1)^{|I|.|I^c|}$, thus $*:A^k\longrightarrow{}A^{n-k}$ while $*^2=(-1)^{k(n-k)}$ on $k$-cochains. This means that for odd $n$, $*$ is an involution. On arbitrary cochains,
\[
(*f)_I=\pm_{I^c,I}f_{I^c}\eqno{(6)}
\]

\noindent{\bf The map $\d$.} Let $\Tu$ and $\Tup$ denote translations by $h\eu$ and $-h\eu$ respectively, where $\eu$ is the unit vector in $\R^n$ in the positive $u$-direction. These operators act on the lattice $L_h$ and thus also on functions on the lattice by
\[
(\Tu{}f)(p)\equiv{}f(p-h\eu),\quad{}(\Tup{}f)(p)\equiv{}f(p+h\eu)
\]
A general chain can be written as a linear combination $\sum_{I,a}f_I(a)I_a$, where $f_I$ is a scalar-valued function on the lattice $L_h$, for each $I\subseteq[n]$. The geometric boundary $\d$ acts on general chains as
\[
\d\Big(\sum\limits_{I,a}f_I(a)I_a\Big)
\!=\!\sum\limits_{I,a}\sum\limits_{u\in{}I}
               \!\pm_I^uf_I(a)\big((I\bs{}u)_{ua}\!-\!(I\bs{}u)_{u'a}\big)
\!=\!\sum\limits_{I,a}\sum\limits_{u\not\in{}I}[(\Tu\!-\!\Tup)f_{uI}](a)I_a\]
where we have extended the coefficients $f_I$ to be defined for arbitrary sequences of elements of $[n]$, rather than subsets $I$, in such a way that they vanish if the sequence contains a repeat, the sign changes under transposition of adjacent elements and is equal to the original $f_I$ when the sequence is monotonic increasing; $uJ$ is the concatenation of $u$ with $J$ in that order. Since $f$ vanishes on sequences containing a repeat, the restriction $u\not\in{}I$ can be lifted, leaving
\[
\d=\sum\limits_{u=1}^n\d_u
\quad\hbox{ where }\quad
\d_u\left(\sum\limits_{I}f_I\cdot{}I\right)=\sum\limits_I(\Tu-\Tup)f_{uI}\cdot{}I
\]
Here we have omitted $a$ from the notation. This can be written concisely as
\[
\d=\sum\limits_{u=1}^n\d_u
\quad\hbox{ where }\quad
(\d_uf)_I=(\Tu-\Tup)f_{uI}\eqno{(7)}
\]

\noindent{\bf The map $\delta$.} The coboundary $\delta:A^{r}\longrightarrow{}A^{r+1}$ can be defined by its action on monomials
\[\delta(fdI)=\sum\limits_{u\not\in{}I}(\Tup{}f-\Tu{}f)du\wedge{}dI\]
Since $du\wedge{}dI=0$ for $u\in{}I$, the restriction on $u\not\in{}I$ can be dropped and
\[\delta=\sum\limits_u\delta_u\quad\hbox{where}\quad
\delta_u(fdI)\equiv(\Tup{}f-\Tu{}f)du\wedge{}dI=\delta_u(f)\wedge{}dI\eqno{(8)}\]
where on the right hand side, $f$ is considered as the 0-cochain $fd\emptyset$. On general cochains,
\[
(\delta_uf)_I=\left\{\begin{array}{ll}
\pm^u_I(\Tup-\Tu)f_{I\bs{}u}&\hbox{if }u\in{}I\\
0&\hbox{if }u\not\in{}I
\end{array}
\right.\eqno{(9)}
\]
\begin{lemma} Define $\idb:A\longrightarrow{}A$ by $\idb(fdI)=(-1)^{|I|}fdI$.  With $m$, $*$, $\d$ and $\delta$ as defined above in (5), (6), (7) and (9), and $\starb=*\circ\idb$,
\begin{itemize}
\item[(i)] $\delta_u\circ\starb=*\circ\d_u$
\item[(ii)] $\d_u\circ*=\starb\circ\delta_u$
\item[(iii)] $\delta_u\circ{}m=m\circ(\delta_u\otimes\Tup+(\idb\circ{}T_u)\otimes\delta_u)$
(shifted analogue of Leibniz for $\delta$)
\end{itemize}
\end{lemma}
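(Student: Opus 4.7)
My plan is to verify all three identities by evaluating both sides on basis elements and tracking signs using the component formulas (5)--(9). Parts (i) and (ii) are dual to each other and reduce to the same permutation-parity identity; part (iii) is a monomial computation that hinges on one key sign rearrangement.

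For part (i), I would evaluate $(\delta_u \starb f)_I$ and $(*\d_u f)_I$ separately on a general index $I$. The LHS vanishes unless $u \in I$ by (9); the RHS vanishes unless $u \in I$ because $(\d_u f)_{I^c} = (T_u - T'_u) f_{uI^c}$ is zero when $u \in I^c$ (the extension of $f$ vanishes on sequences containing a repeat). When $u \in I$, both sides produce a sign times $(T'_u - T_u) f_{I^c \cup \{u\}}$. The LHS collects the sign $\pm^u_I \cdot \pm_{(I\bs u)^c,\, I\bs u} \cdot (-1)^{|I^c|+1}$, where the last factor comes from $\idb$ acting on the cochain of type $(I\bs u)^c = I^c \cup \{u\}$ of grading $|I^c|+1$. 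The RHS collects $\pm_{I^c,I}$ times $(-1)^{\#\{i \in I^c : i < u\}}$, the latter coming from reordering the sequence $uI^c$ into sorted order. Matching these two signs is a short permutation-parity calculation that accounts for the position of $u$ relative to $I$ and to $I^c$. Part (ii) then follows from (i) by applying $*$ and using $*\starb = \pm\id$ with the degree-dependent sign $(-1)^{k(n-k)}$, together with $\starb = *\idb$; alternatively, the same componentwise computation carried out directly yields (ii).

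For part (iii), I would verify the identity on monomial cochains $fdI$ and $gdJ$. When $I\cap J \neq \emptyset$ or $u\in I\cup J$, both sides vanish by direct inspection of (5) and (8). Otherwise, by (8) and (5) the LHS equals
\[
\delta_u\bigl(fg\cdot dI\wedge dJ\bigr) = \bigl(T'_u(fg) - T_u(fg)\bigr)\, du\wedge dI\wedge dJ.
\]
The RHS expands as $(T'_u f - T_u f)(T'_u g)\, du\wedge dI \wedge dJ + (-1)^{|I|}(T_u f)(T'_u g - T_u g)\, dI\wedge du\wedge dJ$. Applying $dI\wedge du = (-1)^{|I|} du\wedge dI$ in the second term exactly absorbs the $(-1)^{|I|}$ contributed by $\idb$. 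The two cross-terms $(T_u f)(T'_u g)$ then cancel, leaving $\bigl((T'_u f)(T'_u g) - (T_u f)(T_u g)\bigr) du\wedge dI\wedge dJ$, which equals the LHS since $T_u$ and $T'_u$ act as algebra homomorphisms on coefficient functions.

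The main obstacle is the sign bookkeeping in (i): several independent conventions ($\pm_{J,K}$, $\pm^u_I$, the $(-1)^{|I|}$ from $\idb$, the permutation sign in the extension of $f$ to non-monotonic sequences, and the reindexing $(I\bs u)^c = I^c\cup\{u\}$) must be shown to combine coherently. Once (i) is nailed down, (ii) drops out by applying the involution $*$, and the computation in (iii) is an elementary manipulation of the wedge product together with the single identity $dI\wedge du = (-1)^{|I|} du\wedge dI$.
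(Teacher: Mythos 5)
Your proposal is correct and follows essentially the same route as the paper: a direct componentwise sign verification for (i) (with (ii) obtained either the same way or by conjugating with $*$), and a monomial computation for (iii) in which the cross terms $(T_uf)(T'_ug)$ cancel after using $dI\wedge du=(-1)^{|I|}du\wedge dI$. The paper merely writes the (iii) computation in the opposite direction, factoring $T'_u(fg)-T_u(fg)$ rather than expanding the right-hand side, which is the same argument.
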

\begin{proof}
(i) By direct calculation
$(*(\d_uf))_I=\pm_{I^c,I}(\Tu-\Tup)f_{uI^c}$ while
\[(\delta_u(*f))_I=\left\{
\begin{array}{ll}
\pm^u_I\pm_{(I\bs{}u)^c,I\bs{}u}(\Tup-\Tu)f_{(I\bs{}u)^c}&\quad{}u\in{}I\\
0&\quad{}u\not\in{}I
\end{array}\right.
\]
so that $(\delta_u(*f))_I=(-1)^{n+1-|I|}(*\d_uf)_I$.

(ii) We obtain  $(*(\delta_uf))_I=(-1)^{n-|I|}(\d_u(*f))_I$ in a similar way to (i).
\item[(iii)] It is sufficient to verify on monomials,
\begin{align*}
\delta_u((fdI)\!\cdot\!(gdJ))
&\!=\!(\Tup-\Tu)(fg) du\wedge{}dI\wedge{}dJ\\
&\!=\!((\Tup{}f)(\Tup{}g)-(\Tu{}f)(\Tu{}g))du\wedge{}dI\wedge{}dJ\\
&\!=\!\!(\Tup{}f\!-\!\Tu{}f)du\!\wedge\!{}dI\!\cdot\!\Tup{}gdJ
\!+\!(\!-\!1)^{|I|}(\Tu{}f)dI\!\cdot\!(\Tup{}g\!-\!\Tu{}g)du\!\wedge\!{}dJ\\
&\!=\!\delta_u(fdI)\cdot\Tup(gdJ)+(-1)^{|I|}\Tu(fdI)\cdot\delta_u(gdJ)
\end{align*}
\vskip-2.4em\end{proof}
\begin{remark}
There is also a shifted analogue of Leibniz for $\delta$  (Lemma 3.1 (iii)) where the shifts are in the opposite directions, that is with $\Tu$ and $\Tup$ interchanged.
\end{remark}
\begin{remark}
Observe that in the continuum limit $h\longrightarrow0$ with $Nh$ constant, the geometry just discussed reduces to that of the torus $\T^n$ while the operators $*$, $\frac1{2h}\delta$ and $\frac1{2h}\d$ converge to the operators $*$, $d$ and $d^*$, respectively. The algebra $A$  intuitively becomes the algebra of differential forms with the usual wedge product and the discrepancy from Leibniz seen in finite differences disappears.
\end{remark}

\section{Computation of infinitesimal cumulants for $\delta$}
In this section we apply the general construction of \S2 to the cochain algebra $(A,\wedge)$ and the square-zero degree 1 map $\delta'\equiv\frac1{2h}\delta$, explicitly calculating the  infinitesimal cumulants $\delta_k$. We work in the algebra extending the scalars by the formal power series in the symbol $h$. At a given level $i$ where the scale is $2^{-i}$, the operators  written as  quotients of operators on $A$ by $2h$ or $h$, as the case may be, mean actual numerical division by the values of $h$  or $2h$ at that scale, {\it e.g.} dividing by $2h$ means dividing by $2^{-i}$.

\begin{theorem} The $k$-bracket (that is, $(k-1)^{th}$ infinitesimal cumulant) $\delta_k:S^kA\longrightarrow{}A$ of $(A,\wedge,\frac1{2h}\delta)$ can be decomposed in components
$\delta_k=\sum_{u=1}^n\delta_{k,u}$ where
\[
\delta_{k,u}=h^{k-1}m_k\circ\sum_{i=1}^k
(-\Delta'_u\circ\idb)^{\otimes(i-1)}\otimes{}\delta_{1,u}\otimes\Delta_u^{\otimes(k-i)}
\]
Here $\Delta_u\equiv\frac1h(\tu-\id)$ and $\Delta'_u\equiv\frac1h(\id-\Tu)$ are the forward and backward divided half-differences, respectively, that is,
$(\Delta_uf)_I(p)=\frac1h\big(f_I(p+h\eu)-f_I(p)\big)$ and $(\Delta'_uf)_I(p)=\frac1h\big(f_I(p)-f_I(p-h\eu)\big)$. Meanwhile $\delta_{1,u}=\frac1{2h}\delta_u$ is based on the symmetric divided difference, $\delta_{1,u}(fdI)\equiv\frac12(\Delta_u+\Delta'_u)f\cdot{}du\wedge{}dI$. Finally, $m_k$  denotes the iterated product $A^{\otimes{}k}\longrightarrow{}A$.
\end{theorem}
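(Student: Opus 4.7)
I would prove the formula by induction on $k$, reducing coordinate by coordinate at the outset. Because the assignment $\d \mapsto D = \tau^{-1}\d^\wedge\tau$ is linear in $\d$, so are its Taylor components $\d_k$; combined with the decomposition $\delta = \sum_u \delta_u$ from (8), this gives $\delta_k = \sum_u \delta_{k,u}$, with each $\delta_{k,u}$ the $k$-th bracket of the square-zero operator $\delta'_u = \frac{1}{2h}\delta_u$ (square-zero from $du\wedge du = 0$). It is thus enough to prove the formula for a fixed direction $u$. The base case $k=1$ is the identification $\delta_{1,u} = \delta'_u$ (since $\tau$ is the identity on $S^1A$), rewritten via $T'_u - T_u = h(\Delta_u + \Delta'_u)$ applied to formula (9) to produce the symmetric divided difference.

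For the inductive step, Lemma 2.7 gives
\[
\delta_{k,u}(v_1\wedge v_2\wedge\cdots\wedge v_k) = \delta_{k-1,u}(v_1v_2\wedge v_3\wedge\cdots) - (-1)^{|v_1|}v_1\delta_{k-1,u}(v_2\wedge v_3\wedge\cdots) - (-1)^{|v_2|(1+|v_1|)}v_2\delta_{k-1,u}(v_1\wedge v_3\wedge\cdots).
\]
Substitute the induction hypothesis into each term. In the first term the product $v_1v_2$ occupies the first tensor slot, and in each of the $k-1$ summands it is hit by exactly one operator: $\delta_{1,u}$ (at inner index $i=1$) or $-\Delta'_u\idb$ (at $i\geq 2$). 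Expand that operator via the shifted Leibniz of Lemma 3.1(iii), namely $\delta_{1,u}(fg) = \delta_{1,u}(f)T'_u(g) + (-1)^{|f|}T_u(f)\delta_{1,u}(g)$, or the easily verified analogue $\Delta'_u(fg) = \Delta'_u(f)g + T_u(f)\Delta'_u(g)$, and then substitute $T'_u = \id + h\Delta_u$ and $T_u = \id - h\Delta'_u$. Each expansion splits into an $\id$-piece (no extra $h$) and an $h$-piece.

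The $\id$-pieces should cancel the two subtracted Lemma 2.7 terms slot by slot, using graded commutativity to move $v_1$ or $v_2$ past the surrounding factors. The key sign identities are $\delta_{1,u}(v_1)v_2 = (-1)^{|v_2|(|v_1|+1)}v_2\,\delta_{1,u}(v_1)$ (since $\delta_{1,u}$ raises degree by $1$) and $v_j\,[-\Delta'_u\idb](v_i) = (-1)^{|v_i||v_j|}[-\Delta'_u\idb](v_i)\,v_j$ (since $-\Delta'_u\idb$ preserves degree); these deliver precisely the prefactors $-(-1)^{|v_1|}$ and $-(-1)^{|v_2|(1+|v_1|)}$ of Lemma 2.7. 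The surviving $h$-pieces then reassemble into the $k$-term sum of the theorem: the $i=1$ expansion of the IH supplies target slots $i_{\mathrm{target}} = 1$ and $2$ (the two halves of the shifted Leibniz), while the $i\geq 2$ expansions supply target slot $i_{\mathrm{target}} = i+1$ (from the $-h\Delta'_u$ contribution to $T_u$ that completes one further $-\Delta'_u\idb$ factor).

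The main obstacle is the Koszul sign bookkeeping. Every $\idb$ contributes a sign $(-1)^{|v_j|}$, every shifted Leibniz carries an additional $(-1)^{|f|}$, and every graded-commutative reorder inside $m_k$ contributes further signs depending on the $|v_j|$. Checking that these combine to produce exactly the prefactors $-(-1)^{|v_1|}$ and $-(-1)^{|v_2|(1+|v_1|)}$ demanded by Lemma 2.7, so that the $\id$-pieces cancel cleanly, is the one genuinely delicate calculation. Once the signs are in order, matching the surviving $h$-pieces to the symmetric $k$-term sum of the theorem is routine index-matching.
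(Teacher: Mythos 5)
Your proposal is correct and follows essentially the same route as the paper's proof: induction on $k$ with base case from (8), the recursion of Lemma 2.7, expansion of the operator hitting $v_1v_2$ via the shifted Leibniz rule of Lemma 3.1(iii) (and its elementary analogue for $\Delta'_u$), and the substitutions $T'_u=\id+h\Delta_u$, $T_u=\id-h\Delta'_u$ so that the unshifted pieces cancel the subtracted terms and the $h$-pieces assemble into the stated sum. The only cosmetic differences are that you make the direction-by-direction reduction via linearity explicit (the paper takes it for granted) and organize the $i>1$ computation as Leibniz-plus-shift rather than the paper's direct telescoping, which is the same arithmetic.
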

\begin{proof}
The proof is by induction. For $k=1$, (8) gives $\delta_{1,u}\equiv\frac1{2h}\delta_u$. For the inductive step, assume the statement of the lemma for $k-1$. By Lemma~2.7,
\begin{align*}
&\delta_{k,u}(v_1\wedge{}v_2\wedge{}v_3\wedge\ldots\wedge{}v_k)
=\delta_{k-1,u}(v_1v_2\wedge{}v_3\wedge\ldots\wedge{}v_k)\\
&\>-\!(-1)^{|v_1|}v_1\!\cdot\!{}\delta_{k-1,u}(v_2\!\wedge\!{}v_3\!\wedge\!\ldots\!\wedge\!{}v_k)
\!-\!(-1)^{|v_2|(1+|v_1|)}v_2\!\cdot\!{}\delta_{k-1,u}(v_1\!\wedge\!{}v_3\!\wedge\!\ldots\!\wedge\!{}v_k)
\end{align*}
Each term expands to a sum of $k-1$ terms indexed by $i=1,\ldots,k-1$. The contribution from $i=1$ is
\begin{align*}
&\delta_{1,u}(v_1v_2)-(-1)^{|v_1|}v_1(\delta_{1,u}v_2)
-(-1)^{|v_2|(1+|v_1|)}v_2(\delta_{1,u}v_1)\\
&=(\delta_{1,u}v_1)(T'_uv_2)+(-1)^{|v_1|}(T_uv_1)(\delta_{1,u}v_2)
-(-1)^{|v_1|}v_1(\delta_{1,u}v_2)-(\delta_{1,u}v_1)v_2\\
&=h(\delta_{1,u}v_1)(\Delta_uv_2)
-h(-1)^{|v_1|}(\Delta'_uv_1)(\delta_{1,u}v_2)
\end{align*}
times $h^{k-2}\big(m_{k-2}\circ\Delta_u^{\otimes(k-2)}\big)(v_3\otimes\cdots\otimes{}v_k)$, where in the first step Lemma 3.1(iii) was applied. This yields the $i=1$ and $i=2$ terms of the expression to be proved. The contribution from the three $i$-th terms (for $i>1$) is
\begin{align*}
&(-\Delta'_u\circ\idb)(v_1v_2)-(-1)^{|v_1|}v_1(-\Delta'_u\circ\idb)(v_2)-(-1)^{|v_2|(1+|v_1|)}v_2(-\Delta'_u\circ\idb)(v_1)\\
&=(-1)^{|v_1|+|v_2|+1}\big(\Delta'_u(v_1v_2)-v_1\Delta'_u(v_2)-\Delta'_u(v_1)v_2\big)\\
&=(-1)^{|v_1|+|v_2|+1}\tfrac1h\big(v_1v_2-(T_uv_1)(T_uv_2)-v_1(v_2-T_uv_2)-(v_1-T_uv_1)v_2\big)\\
&=(-1)^{|v_1|+|v_2|}\tfrac1h(v_1-T_uv_1)(v_2-T_uv_2)\\
&=h(-1)^{|v_1|+|v_2|}(\Delta'_uv_1)(\Delta'_uv_2)=h\big(m\circ(-\Delta'_u\circ\idb)^{\otimes2}\big)(v_1\otimes{}v_2)
\end{align*}
times $h^{k-2}m_{k-2}\big((-\Delta'_u\circ\idb)^{\otimes(i-2)}(v_3\otimes\cdots\otimes{}v_i)\otimes{}q_{1,u}v_{i+1}\otimes\Delta_u^{\otimes(k-1-i)}(v_{i+2}\otimes\cdots\otimes{}v_k)\big)$, which is the $(i+1)$-th term in the expression to be proved.
\end{proof}
\begin{remark}
In the limit $h\to0$, both differences $\Delta_u$ and $\Delta'_u$ approach the partial derivative. The factor $h^{k-1}$ in $\delta_{k,u}$ ensures that $\delta_{2,u}\to0$.  Vanishing of the first infinitesimal cumulant is precisely the statement that Leibniz holds, in other words,  the sequence of higher bracket structures, $(A,\wedge,\frac1{2h}\delta)$  approximates a  differential graded algebra, namely the differential graded algebra of differential forms $(\Omega^*,\wedge,d)$ on the torus $\T^n$.
\end{remark}

\section{Computation of infinitesimal cumulants for $\partial$}
In this section we apply the general construction of \S2 to the chain algebra $(C,\wedge)$ and the square-zero degree $-1$ boundary map $\frac1{2h}\d$, explicitly calculating the infinitesimal cumulants $\d_k$.

\noindent Define the interior product $i_u:C\longrightarrow{}C$ by $i_u(x)=*(*^{-1}x\wedge{}u)$. Equivalently, $(i_u(x))_I=x_{uI}$ while on monomials
\[
i_u(f\cdot{}I)=\left\{\begin{array}{ll}
0&\quad\hbox{if $u\not\in{}I$}\\
\pm{}f\cdot(I\backslash\{u\})&\quad\hbox{if $u\in{}I$}
\end{array}\right.
\]
where the sign is defined so that for $u\in{}I$, $u\wedge{}i_u(I)=I$.
\begin{lemma} The shifts and interior product satisfy
\begin{itemize}
\item[(i)] $(\Tu{}i_u)\circ{}m=m\circ(\Tu{}i_u\otimes{}\Tu+\Tu\idb\otimes{}\Tu{}i_u)$
\item[(ii)] $(\Tup{}i_u)\circ{}m=m\circ(\Tup{}i_u\otimes{}\Tup+\Tup\idb\otimes{}\Tup{}i_u)$
\end{itemize}
\end{lemma}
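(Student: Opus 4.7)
The plan is to reduce both identities to two elementary facts about the chain wedge product $m$: (a) each shift $\Tu$ and $\Tup$ is a homomorphism for $m$, i.e.\ $\Tu \circ m = m \circ (\Tu \otimes \Tu)$ and similarly for $\Tup$; and (b) the interior product $i_u$ is a graded derivation of $m$, i.e.\ $i_u \circ m = m \circ (i_u \otimes \id + \idb \otimes i_u)$. Fact (a) is immediate from the pointwise decomposition $C = \bigoplus_{a \in L_h} C_a$ with $C_a = \wedge^*\R^n$: the shift merely relabels the centre $a$ and acts as the identity on the exterior-algebra factor, so it commutes with the pointwise wedge. Fact (b) is the standard Leibniz rule for interior multiplication on the exterior algebra $\wedge^*\R^n$ applied at each lattice point; the grading sign $(-1)^{|I|}$ in $i_u(I \wedge J) = i_u(I) \wedge J + (-1)^{|I|} I \wedge i_u(J)$ is precisely the action of $\idb$.

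Granted (a) and (b), identity (i) is pure bookkeeping:
\begin{align*}
(\Tu i_u) \circ m
&= \Tu \circ m \circ (i_u \otimes \id + \idb \otimes i_u) \\
&= m \circ (\Tu \otimes \Tu) \circ (i_u \otimes \id + \idb \otimes i_u) \\
&= m \circ (\Tu i_u \otimes \Tu + \Tu \idb \otimes \Tu i_u),
\end{align*}
where the second step uses (a) and the third composes the tensor factors (noting that $\Tu$ commutes with both $i_u$ and $\idb$, since all three operators act on different pieces of the chain data). Identity (ii) is obtained by the same computation with $\Tu$ replaced by $\Tup$ throughout; no features distinguish the two shifts for this argument.

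The only real obstacle is the sign bookkeeping in (b), where one must match the internal signs arising from removing $u$ from an ordered subset $I$ against the $\pm_{J,K}$ signs appearing in the pointwise wedge product. These are precisely the signs that make ordinary interior multiplication on an exterior algebra a graded derivation, so nothing genuinely new arises from the lattice structure: the lattice contributes only the centre $a$, which is inert under $i_u$ and uniformly transported by the shift. Thus the substantive content of the lemma is concentrated in fact (b), and once that is verified on monomials $f \cdot I_a \wedge g \cdot J_a$ at a common centre, the identities (i) and (ii) fall out formally.
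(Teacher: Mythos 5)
Your proof is correct and is essentially the paper's argument in modular form: the paper's single component computation splits the sum over $J\cup K=uI$ according to whether $u\in J$ or $u\in K$ (your fact (b), the antiderivation property with the $\idb$ sign) and then pulls the shift through the pointwise product (your fact (a)). The factorization into (a) and (b) is a tidier packaging, but the substantive sign check is the same one the paper carries out.
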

\begin{proof}
The proofs of the two parts are identical. For (i), we compute (for $u\not\in{}I$, otherwise both sides vanish),
\begin{align*}
((\Tu{}i_u)(f.g))_I&=(\Tu(f\cdot{}g))_{uI}\\
&=\sum\limits_{J\cup{}K=uI}\!\pm_{JK}^{uI}\Tu(f_Jg_K)\\
&=\sum\limits_{J\cup{}K=I}\big(\pm_{uJK}^{uI}\Tu(f_{uJ}g_K)
+\pm_{JuK}^{uI}\Tu(f_Jg_{uK})\big)\\
&=\sum\limits_{J\cup{}K=I}\!\pm_{JK}\big((\Tu{}f_{uJ})(\Tu{}g_K)
+(-1)^{|J|}(\Tu{}f_J)(\Tu{}g_{uK})\big)\\
&=\sum\limits_{J\cup{}K=I}\!\pm_{J,K}\big((\Tu{}i_u)f)_J(\Tu{}g)_K
+(-1)^{|J|}(\Tu{}f)_J((\Tu{}i_u)g)_K)\big)
\end{align*}
as required. In the third line, the sum has been split according as $u\in{}J$ or $u\in{}K$ and the sets have been relabelled.
\end{proof}
\begin{theorem} The $k$-bracket (that is, $(k-1)^{th}$ order infinitesimal cumulant) of $(C,\wedge,\frac1{2h}\d)$ can be decomposed in components
$\d_k=\sum_{u=1}^n\d_{k,u}$ where
\begin{align*}
 \d_{k,u}=-\frac12h^{k-2}&\sum_{i=1}^k
m_k\circ\Big[(\Delta_u\idb)^{\otimes(i-1)}\otimes(\Tup{}i_u)
\otimes(\Delta_u)^{\otimes(k-i)}\\
&\qquad+(-1)^k(\Delta'_u\idb)^{\otimes(i-1)}\otimes(\Tu{}i_u)
\otimes(\Delta'_u)^{\otimes(k-i)} \Big]
\end{align*}
where $\Delta_u$ and $\Delta'_u$ are the forward and backward divided half-differences, respectively, as in Theorem~4.1.
\end{theorem}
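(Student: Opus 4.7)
My plan is to mirror the inductive strategy of Theorem 4.1, replacing the shifted Leibniz for $\delta$ (Lemma 3.1(iii)) by its interior-product counterpart supplied by Lemma 5.1. The starting observation is the decomposition $\partial_u = \Tu i_u - \Tup i_u$ on chains, which already isolates the two branches of the claimed formula: the $A_i$-branch built from forward data $(\Tup i_u, \Delta_u)$ and the $B_i$-branch built from backward data $(\Tu i_u, \Delta'_u)$, using the identities $\Tup - \id = h\Delta_u$ and $\Tu - \id = -h\Delta'_u$. Dividing by $2h$ and matching signs verifies the base case $k=1$ immediately, since the single summand $(\Tup i_u) + (-1)(\Tu i_u)$ times the prefactor $-\frac{1}{2h}$ equals $\frac{1}{2h}\partial_u$.

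For the inductive step I would apply Lemma 2.7 to write $\partial_{k,u}$ as an alternating combination of three copies of $\partial_{k-1,u}$, substitute the inductive hypothesis, and then bundle the three $i$-th contributions together. When $i=1$, the operator $\Tup i_u$ (respectively $\Tu i_u$) lands on $v_1 v_2$ in one copy and on $v_1$ or $v_2$ singly in the other two; Lemma 5.1 expands the first summand into two pieces, and the resulting four terms collapse to $h\bigl[(\Tup i_u v_1)(\Delta_u v_2) + (-1)^{|v_1|}(\Delta_u v_1)(\Tup i_u v_2)\bigr]$ times the trailing $\Delta_u^{\otimes(k-2)}$, the sign $(-1)^{|v_1|}$ being absorbed by the $\idb$ in $\Delta_u \idb$. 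This produces exactly the $i=1$ and $i=2$ terms of the $A$-branch at level $k$. When $i>1$, the operator $\Delta_u \idb$ occupies the first slot, and the three-way combination is driven by the discrete Leibniz defect
\[
\Delta_u(v_1 v_2) - v_1 \Delta_u v_2 - (\Delta_u v_1) v_2 = h(\Delta_u v_1)(\Delta_u v_2),
\]
which follows by direct expansion of $\Tup - \id = h\Delta_u$ and promotes the $i$-th term at level $k-1$ into the $(i+1)$-th term at level $k$. The $B$-branch proceeds identically, except that the analogous identity for $\Delta'_u$ carries an extra minus sign; this is precisely what converts the inductive $(-1)^{k-1}$ prefactor into the required $(-1)^k$ at level $k$.

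The main obstacle I anticipate is bookkeeping Koszul signs. When the Lemma 2.7 formula is unpacked, pieces such as $v_2 \cdot (\Tup i_u v_1)$ and $v_2 \cdot (\Delta_u \idb v_1)$ must be commuted back to put the operator-bearing factor in its proper slot, and the resulting graded signs must combine with the explicit $(-1)^{|v_2|(1+|v_1|)}$ from Lemma 2.7 and with the parity shifts of $\idb$ so that everything collapses cleanly to the $\idb$-decorated expression in the statement. A careful separate check handles this, together with verification that the same bookkeeping is compatible across both the $A$- and $B$-branches. Beyond these sign details, no combinatorial ideas beyond those in the proof of Theorem 4.1 are needed; the genuinely new content here is the two-branch structure, which reflects the one-sided nature of the shifts inside $\Tu i_u$ and $\Tup i_u$.
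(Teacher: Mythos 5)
Your proposal is correct and follows essentially the same route as the paper: induction on $k$ via Lemma 2.7, with the base case read off from formula (7), Lemma 5.1 handling the $i=1$ terms of each branch, and the discrete Leibniz defect of $\Delta_u$ (resp.\ $\Delta'_u$, with its extra sign producing the $(-1)^k$) promoting the $i$-th term to the $(i+1)$-th. The sign bookkeeping you flag is exactly the content the paper delegates to the $\idb$-decorations and to the last block of the proof of Theorem 4.1, and your identities are consistent with that.
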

\begin{proof}
The proof proceeds by induction on $k$. For $k=1$, observe that by (7), $\d_u(f\cdot{}I)=(\Tu{}f-\tu{}f)\cdot{}i_u(I)$ and so $\d_{1,u}=\frac1{2h}\d_u=\frac1{2h}(\Tu-\Tup)\circ{}i_u$. For the inductive step, assume the statement for $k-1$. By Lemma~2.7,
\begin{align*}
&\d_{k,u}(v_1\wedge{}v_2\wedge{}v_3\wedge\ldots\wedge{}v_k)
=\d_{k-1,u}(v_1v_2\wedge{}v_3\wedge\ldots\wedge{}v_k)\\
&\,-\!(-1)^{|v_1|}v_1\!\cdot\!{}\d_{k-1,u}(v_2\!\wedge\!{}v_3\!\wedge\!\ldots\!\wedge\!{}v_k)
\!-\!(-1)^{|v_2|(1+|v_1|)}
v_2\!\cdot\!{}\d_{k-1,u}(v_1\!\wedge\!{}v_3\!\wedge\!\ldots\!\wedge\!{}v_k)
\end{align*}
Each term expands to a sum of $2(k-1)$ terms, two from each $i=1,\ldots,k-1$. The first terms for $i=1$ contribute
\begin{align*}
&(\Tup{}i_u)(v_1v_2)-(-1)^{|v_1|}v_1(\Tup{}i_uv_2)-(-1)^{|v_2|(1+|v_1|)}v_2(\Tup{}i_uv_1)\\
&=(\Tup{}i_uv_1)(\Tup{}v_2)+(\Tup\idb{}v_1)(\Tup{}i_uv_2)
-(-1)^{|v_1|}v_1(\Tup{}i_uv_2)-(\Tup{}i_uv_1)v_2\\
&=h(\Tup{}i_uv_1)(\Delta_uv_2)+h(-1)^{|v_1|}(\Delta_u\idb{}v_1)(\Tup{}i_uv_2)
\end{align*}
times $-\frac12h^{k-3}\big(m_{k-2}\circ\Delta_u^{\otimes(k-2)}\big)(v_3\otimes\cdots\otimes{}v_k)$. Here we used Lemma 5.1(ii) in the first line.  These are the evaluations of the first term in the sum for $\d_{k,u}$ to be proved, when $i=2$, $i=1$ respectively. The second terms for $i=1$ contribute similarly, only there is an extra sign, $\Tu$ replaces $\Tup$ and $\Delta'_u$ replaces $\Delta_u$.

\noindent The  contribution to $\d_{k,u}(v_1\wedge\ldots\wedge{}v_k)$ from the $i$-th terms of the second type is
\begin{align*}
&(\Delta'_u\idb)(v_1v_2)-(-1)^{|v_1|}v_1(\Delta'_u\idb)(v_2)
-(-1)^{|v_2|(|v_1|+1)}v_2(\Delta'_u\idb)(v_1)\\
&\qquad=-h\big(m\circ(\Delta'_u\idb)^{\!\otimes2}\big)(v_1\otimes{}v_2)
\end{align*}
times $-\frac12(-h)^{k-3}m\big((\Delta_u\idb)^{\otimes(i-2)}\!(v_3\otimes\cdots\otimes{}v_{i})
\otimes(\Tup{}i_uv_{i+1})\otimes
(\Delta_u)^{\otimes(k-i-1)}\!(v_{i+2}\otimes\cdots\otimes{}v_k)\big)$. The equality follows by the argument in the last block of calculation in the proof of Theorem 4.1. The resulting expression is the contribution of the second type of term for $i+1$ in the expression to be proved. A similar argument goes for the first type of term, only with a different sign and $\Delta_u$ in place of $\Delta'_u$.
\end{proof}
\begin{example}
Here are the formulae for the first few brackets for $n=3$. Write the $(k-1)^{th}$  infinitesimal cumulant $\d_k(v_1\wedge\ldots\wedge{}v_k)$ instead, using bracket notation, as $[v_1,\ldots,v_k]$. Then the  2-bracket on chains splits into pieces $[\cdot,\cdot]=[\cdot,\cdot]_x+[\cdot,\cdot]_y+[\cdot,\cdot]_z$ (this is the splitting of $\d_k$ into pieces $\d_{k,u}$)
where
 \[
    [f\cdot{}I,g\cdot{}J]_u
=\left\{
    \begin{array}{ll}
    \frac12\big((\Delta_uf+\Deltab_uf)g-f(\Delta_ug+\Deltab_ug)\big)i_u(I)\cdot{}J&\hbox{if $u\in{}I,J$}\\
    -\frac12\big((\Tu{}f)(\Deltab_ug)+(\tu{}f)(\Delta_ug)\big)\cdot{}i_u(I\cdot{}J)&\hbox{if $u\in{}I,\>u\not\in{}J$}\\
    -\frac12\big((\Deltab_uf)(\Tu{}g)+(\Delta_uf)(\tu{}g)\big)\cdot{}i_u(I\cdot{}J)&\hbox{if $u\not\in{}I,\>u\in{}J$}\\
    0&\hbox{if $u\not\in{}I,J$}
    \end{array}\right.
\]
The 3-bracket $[\cdot,\cdot,\cdot]$ on chains similarly splits into three parts as $[\cdot,\cdot,\cdot]_x+[\cdot,\cdot,\cdot]_y+[\cdot,\cdot,\cdot]_z$ where $[f\cdot{}I,g\cdot{}J,k\cdot{}K]_u$ is $\frac{h}2$ times:
\[    \begin{array}{ll}
    0&\hbox{if $u\in{}I,J,K$}\\
    (-1)^{|I|}[(f\Delta_ug-g\Delta_uf)\Delta_uk+(g\Deltab_uf-f\Deltab_ug)\Deltab_uk]\cdot{}Ii_u(J)K&\hbox{if $u\in{}I,J,K^c$}\\
    (-1)^{|I|\!+\!|J|}[(f\Delta_uk\!-\!k\Delta_uf)\Delta_ug\!+\!(k\Deltab_uf\!-\!f\Deltab_uk)\Deltab_ug]\cdot{}IJi_u(K)&\hbox{if $u\in{}I,J^c,K$}\\
    (-1)^{|I|}[(k\Delta_ug-g\Delta_uk)\Delta_uf+(g\Deltab_uk-k\Deltab_ug)\Deltab_uf]\cdot{}Ii_u(J)K&\hbox{if $u\in{}I^c,J,K$}\\
    (\Tu{}f\cdot\Deltab_ug\cdot\Deltab_uk-\tu{}f\cdot\Delta_ug\cdot\Delta_uk)\cdot{}i_u(IJK)&\hbox{if $u\in{}I,J^c,K^c$}\\
    (\Deltab_uf\cdot\Tu{}g\cdot\Deltab_uk-\Delta_uf\cdot\tu{}g\cdot\Delta_uk)\cdot{}i_u(IJK)&\hbox{if $u\in{}I^c,J,K^c$}\\
    (\Deltab_uf\cdot\Deltab_ug\cdot\Tu{}k-\Delta_uf\cdot\Delta_ug\cdot\tu{}k)\cdot{}i_u(IJK)&\hbox{if $u\in{}I^c,J^c,K$}\\
    0&\hbox{if $u\not\in{}I,J,K$}
    \end{array}
\]
\end{example}
\begin{remark}
In the limit $h\to0$, the differences $\Delta_u$ and $\Delta'_u$ approach the partial derivative. The factor $h^{k-2}$ in $\d_{k,u}$ ensures that $\d_{3,u}\to0$ although $\d_{2,u}$ does not approach zero.  Vanishing of the 3-bracket (second infinitesimal cumulant) in the continuum limit is the second order derivation property, along with a non-trivial structure $\d_2=[\d,\wedge]$, namely the Schouten-Nijenhuis bracket on  multivector fields, the natural extension of the Lie bracket on ordinary vector fields.
\end{remark}
\

\section{Binary QFT Algebras}

  We use an adaptation to our circumstances of the definition of binary QFT algebras from \cite{P} Definition 2.3.

 \medskip\begin{definition}  If a (graded) commutative associative algebra over $\Q[[h]]$ has a differential, this  determines  a structure   consisting of  all the brackets, Definition 2.4. (Known to be expressible in terms of  commutators of the  differential  with the iterated  products.) If these all vanish this is called a classical algebra. If for all $k\in\N$, the $k^{th}$ higher infinitesimal cumulant (that is, the $(k+1)$-bracket) is divisible by $h^k$, this is called a binary QFT algebra.
 \end{definition}

   \medskip\begin{definition}  The sequence of $k$-brackets on V  associated to the  structure of a binary QFT algebra  on a graded  linear space $V$  over $\Q[[h]]$ determines  a  coderivation $D$ of $SV$, of square zero.
 A coalgebra mapping $I:SV\longrightarrow{}SW$, preserving the monomial filtration which intertwines the respective coderivations $D$ on $SV$ and $D$ on $SW$, is  a (nonlinear) morphism  of binary QFT structures on V and W if its $k^{th}$ Taylor coefficient  $S^kV\longrightarrow{}W$ is divisible by $h^{k-1}$.
 \end{definition}

\medskip {\bf Discussion of $\d$ and the chain algebra.}
Take the lattice chain algebra $C$ of \S3 and extend the scalars to $\Q[[h]]$. Furthermore we add to these the difference quotient operators, $\Delta_u$ (see the statement of Theorem 4.1 for their definition), putting in the relation
\[
T'_uc-c=h\Delta_uc
\]
for all chains $c$, to make the difference between a chain and its translate by $h$ in any lattice direction formally divisible by $h$. The resulting chain algebra we will denote by $C(h)$.

 On this algebra there are unary operations which reflect the geometry of the chain algebra: $T_u$ (translation through $h$ in the $u^{th}$ lattice direction, see \S3), $\idb$ (introducing a sign $(-1)^c$ for a $c$-chain; see \S2.3), $i_u$ (interior product of the chain with the $u$-direction; see the beginning of \S5 for its definition) along with $\Delta_u$. All compositions of iterated multiplications $m_k:S^{k+1}C(h)\longrightarrow{}C(h)$ with the unary operators $T_u$, $\idb$, $i_u$, $\Delta_u$, and their linear combinations, are considered as allowed morphisms.

 Recall from the proof of Theorem 4.1 that $\Delta'_u=\Delta_u\circ{}T_u$ while from the proof of Theorem 5.2,
$\d=\sum_u(T_u-T'_u)\circ{}i_u
=-h\sum_u(\Delta_u+\Delta'_u)\circ{}i_u$.
 This makes $\d$ divisible by $h$ as an operator on $C(h)$ and so we define $\delp\equiv\frac1{h}\d$.
We can now state the formalisation of Theorem 1.2(A).
\begin{theorem}
    The triple $(C(h),\wedge,\d)$ defines a binary QFT algebra.
   \end{theorem}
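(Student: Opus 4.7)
The plan is to reduce Theorem 6.3 directly to the explicit $\ell$-bracket formula established in Theorem 5.2, together with the bookkeeping needed to see that the resulting divisibility actually takes place inside $C(h)$.

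Unrolling Definition 6.1 for $(C(h),\wedge,\d)$, one needs to verify that for every $\ell\geq1$ the $\ell$-bracket $\d_\ell$ is divisible by $h^{\ell-1}$ (so that, putting $k=\ell-1$, the $(k+1)$-bracket is divisible by $h^k$). Inspecting Lemma~2.6, each summand defining $\d_\ell$ contains exactly one instance of the differential, hence the $\ell$-bracket is strictly linear in the differential. Consequently the $\ell$-bracket of $\d=2h\cdot\tfrac1{2h}\d$ equals $2h$ times the $\ell$-bracket of $\tfrac1{2h}\d$, and Theorem 5.2 decomposes the latter as $\sum_{u=1}^n\d_{\ell,u}$ with an outer factor $h^{\ell-2}$ multiplying a fixed sum of compositions of $m_\ell$ with the unary operators $\Tu$, $\Tup$, $\idb$, $i_u$, $\Delta_u$, $\Delta'_u$. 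Multiplying by the extra $2h$ gives a prefactor of $2h^{\ell-1}$ on the $\ell$-bracket of $\d$, which is precisely the order of divisibility required.

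It then remains to check that what is left after extracting this $h^{\ell-1}$ is a genuine morphism on $C(h)$, i.e.\ that no further negative powers of $h$ are hidden inside. This is exactly the reason $C(h)$ was defined by adjoining the relation $\Tup c - c = h\Delta_u c$: the operator $\Delta_u$ is thereby promoted to an allowed unary morphism on $C(h)$, while $\Delta'_u=\Delta_u\circ\Tu$ (as noted in the discussion preceding the theorem), and $\Tu$, $\idb$, $i_u$ and the iterated product $m_\ell$ are allowed morphisms by construction. Hence every composition appearing in the bracket formula of Theorem 5.2 is a legitimate morphism on $C(h)$, the factorisation $\d_\ell=h^{\ell-1}\cdot(\text{morphism on }C(h))$ takes place genuinely inside $C(h)$, and Definition 6.1 identifies $(C(h),\wedge,\d)$ as a binary QFT algebra. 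The combinatorial heart of the argument was already discharged in Theorem 5.2; the only real work here is tracking the single power of $h$ picked up in passing from $\tfrac1{2h}\d$ to $\d$ and confirming that the very design of $C(h)$ makes $\Delta_u$ a first-class citizen, so that no spurious $1/h$ survives.
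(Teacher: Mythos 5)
Your proof is correct and follows essentially the same route as the paper: Theorem 6.3 is there deduced in one line from Theorem 5.2, using exactly the rescaling from $\tfrac1{2h}\d$ to $\d$ (via $\delp=\tfrac1h\d$) and the fact that the residual operators $\Tu$, $\idb$, $i_u$, $\Delta_u$ (with $\Delta'_u=\Delta_u\circ\Tu$) are allowed morphisms on $C(h)$. Your version merely makes explicit the linearity of the brackets in the differential (via Lemma 2.6) that justifies the rescaling, which the paper leaves implicit.
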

\begin{proof}
   By Theorem 5.2, the $k^{th}$ infinitesimal cumulant (that is the $(k+1)$-bracket) of $(C(h),\wedge,\delp)$ is divisible by $h^{k-1}$  and so according to the above definition, $(C(h),\wedge,h\delp)$ defines a binary QFT.
 \end{proof}

\medskip{\bf Discussion of $\delta$ and the cochain algebra.}
Now we take the lattice cochain algebra $A$ of \S3 and extend the scalars to $\Q[[h]]$, adjoin difference quotients and add the relation
\[
T'_ua-a=h\Delta_ua
\]
for all cochains $a$, to make the difference between a cochain and its translate by $h$ in any lattice direction formally divisible by $h$. The resulting cochain algebra we will denote by $A(h)$.

On this algebra there are unary operations which reflect the geometry of the cochain algebra: $T_u$, $\idb$ and wedge with $du$, along with $\Delta_u$.
  Furthermore, by (8) in \S3,
  $\delta(fdI)=\sum_u(T'_uf-T_uf)du\wedge{}dI
  =h\sum_udu\wedge(\Delta_u+\Delta_u')(fdI)$
is divisible by $h$ and so we can define the operator $\deltap\equiv\frac1{2h}\delta$ on $A(h)$.

The formalisation of Theorem 1.3(A) is as follows.
\begin{theorem}
     The triple $(A(h),\wedge,\deltap)$ defines a binary QFT algebra.
\end{theorem}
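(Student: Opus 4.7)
The plan is to appeal directly to Theorem~4.1 and read off the required divisibility. Theorem~4.1 gives an explicit formula
\[
\delta_k = \sum_{u=1}^n \delta_{k,u} = h^{k-1}\sum_{u=1}^n m_k\circ\sum_{i=1}^k(-\Delta'_u\circ\idb)^{\otimes(i-1)}\otimes\delta_{1,u}\otimes\Delta_u^{\otimes(k-i)}
\]
for the $k$-bracket of the pair $(A,\wedge,\deltap)$, associated via the coderivation construction of \S2 to $D=\tau^{-1}\delta^\wedge\tau$ on $S^*A$. The first step is to observe that after passing to $A(h)$ --- that is, after extending scalars to $\Q[[h]]$ and imposing $T'_u a-a=h\Delta_u a$ --- every operator appearing on the right-hand side is a legitimate $\Q[[h]]$-linear morphism of $A(h)$. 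Indeed $\Delta_u$ and $\Delta'_u=\Delta_u\circ T_u$ are among the unary operations adjoined to $A(h)$ in the paragraph preceding the theorem statement, $\idb$ is the parity automorphism of \S2.3, $\delta_{1,u}(f\,dI)=\frac12(\Delta_u+\Delta'_u)f\cdot du\wedge dI$ is a combination of these with wedge-with-$du$, and $m_k$ is just iterated wedge multiplication. Moreover $\deltap=\frac1{2h}\delta$ itself is well-defined on $A(h)$ because $\delta=h\sum_u du\wedge(\Delta_u+\Delta'_u)$ is manifestly divisible by $h$ there, as noted just before the theorem.

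Given this identification, the proof concludes in one line: Theorem~4.1 exhibits $\delta_k$ as $h^{k-1}$ times an explicit $\Q[[h]]$-morphism $A(h)^{\otimes k}\to A(h)$, so $\delta_k$ is divisible by $h^{k-1}$ in $A(h)$. Re-indexing $k\mapsto k+1$, the $(k+1)$-bracket is divisible by $h^k$ for every $k\in\N$, which is exactly the condition of Definition~6.1 that makes $(A(h),\wedge,\deltap)$ a binary QFT algebra.

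The only place where care is required --- and this is the main (mild) obstacle --- is ensuring that the displayed prefactor $h^{k-1}$ really sits inside the scalar ring $\Q[[h]]$ of $A(h)$ and is not secretly cancelled by negative powers of $h$ hidden in $\Delta_u$, $\Delta'_u$ or $\delta_{1,u}$. Since each of these operators has already been written in the ``positive-$h$'' form whose $h\to 0$ limit is the partial derivative $\partial_u$ (respectively $du\wedge\partial_u$ for $\delta_{1,u}$), no such cancellation can occur and the $h^{k-1}$-divisibility is honest. No additional estimates beyond Theorem~4.1 are needed; the real computational work has already been carried out in \S4.
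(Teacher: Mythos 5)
Your proposal is correct and is essentially the paper's own proof, which likewise consists of a one-line appeal to Theorem~4.1: the $k^{th}$ infinitesimal cumulant (the $(k+1)$-bracket) of $(A(h),\wedge,\deltap)$ carries the explicit prefactor $h^{k}$, which is precisely the divisibility required by Definition~6.1. Your additional checks --- that the operators $\Delta_u$, $\Delta'_u$, $\idb$, $\delta_{1,u}$ and $m_k$ are legitimate morphisms of $A(h)$ and that no negative powers of $h$ lurk inside them --- are sensible elaborations the paper leaves implicit.
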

\begin{proof}
  By Theorem 4.1, the $k^{th}$ infinitesimal cumulant (that is the $(k+1)$-bracket) of $(A(h),\wedge,\deltap)$ is divisible by $h^k$  as required.
   \end{proof}

\begin{remark}
 While $\delta$, $\d$ are the familiar boundary and coboundary operators of algebraic topology on chains and cochains respectively, the operators $\deltap$ and $\delp$ are the ones which will give first order discrete approximations to the continuum calculus.
 \end{remark}

\section{Maps between scales}

\subsection{Coarse scale lattice inside finer lattice} Contained in the lattice $L_h$ is the sublattice $L_{2h}=(2h\Z)^n/(4Nh\Z)^n$, a periodic cubic lattice with grid step $2h$ and size $2N$ in each direction. The vertices of $L_{2h}$ are bicoloured similarly to $L_h$ by the parity of $\frac1{2h}\sum_{u}x_u$, although it should be noted that the colourings of the two lattices are {\it not} compatible. Denote the corresponding constructions to those of \S3 on the coarser lattice by a bar, $\Ibar_a$ being a cell of type $I$ with edge lengths $4h$ and centre $a\in{}L_{2h}$, $\Ibar{}a\equiv{}IIa$ denoting a shift of $a$ forward by $2h$ in each of the directions in $I$, and so forth, culminating in the coarse chain space $\Cbar$ with its pointwise multiplication $\wedge$.

\subsection{Chain and cochain maps}
There is a natural map from a chain complex of a coarse lattice to that of a subdivision  called {\it crumbling}.  The dual map on cochains is {\it integration}, from cochains on the fine lattice to cochains on a coarser lattice.  However there is a subtlety in our case, in that our chains are generated by \textbf{all} cubes of edge length $2h$, so that there are finer $k$-cubes which do not appear in subdivisions of coarser $k$-cubes.

 To analyze this for  $n=3$, the chain complexes we are studying  at each scale are naturally  the direct sum of 8 copies of  shift-isomorphic subcomplexes.  Cutting  the scale $h$  in half also yields 8 direct summands of  the finer chain complex.
The finer 8 can be respectively shifted to become subdivisions of the coarser 8 summands and then the crumbling chain mappings (and their dual, the integration maps on cochains) are  present. However, the  shifts that effect this picture are not unique.

To give precise formulae, note that $C=E^{\otimes{}n}$ where $E$ is the chain complex of a one-dimensional lattice $h\Z/4Nh\Z$, whose chains are generated by points $\emptyset_a$ and intervals of length $2h$, the one centred at $a$ being denoted $x_a$. On $E$, the boundary acts $\d:E_1\longrightarrow{}E_0$,
\[
\d(x_a)=\emptyset_{a+h}-\emptyset_{a-h}
\]
In this one-dimensional setting, the chain complex splits into a direct sum of two copies, those with even vertices and those with odd vertices. Choose the shifts on the finer chain complex so as to be trivial on the even vertex subcomplex, and to be a shift to the right by $h$ on the odd vertex subcomplex. The corresponding  crumbling map $\iota:\Ebar\longrightarrow{}E$ is given by
\[
\iota(\emptyb_a)=\left\{\begin{array}{ll}
\hskip-1.5ex\emptyset_a&\hskip-1ex\hbox{if }a\in4h\Z\\
\hskip-1.5ex\emptyset_{a-h}&\hskip-1ex\hbox{if }a\in2h+4h\Z
\end{array}\right.\hbox{and }
\iota(\xbar_a)=\left\{\begin{array}{ll}
\hskip-1.5ex{}x_a+x_{a-2h}&\hskip-1ex\hbox{if }a\in4h\Z\\
\hskip-1.5ex{}x_{a-h}+x_{a+h}&\hskip-1ex\hbox{if }a\in2h+4h\Z
\end{array}\right.
 \]
\begin{lemma} $\iota:\Ebar\longrightarrow{}E$ is a chain map for $\d$, that is  $\iota\circ\db=\d\circ\iota$.
\end{lemma}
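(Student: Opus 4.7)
The plan is to verify the identity $\iota\circ\dbar = \d\circ\iota$ directly on generators by a small case analysis. Since $\Ebar$ is generated in positive dimension by the $\xbar_a$ and $\dbar(\emptyb_a)=0=\d(\iota(\emptyb_a))$, we only need to check the identity on each $\xbar_a$, and by the definition of $\iota$ we must split into the two cases $a\in4h\Z$ and $a\in 2h+4h\Z$.

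First I would record the coarse boundary formula $\dbar(\xbar_a)=\emptyb_{a+2h}-\emptyb_{a-2h}$ (the exact analogue of $\d(x_a)=\emptyset_{a+h}-\emptyset_{a-h}$ with $h$ replaced by $2h$). The key observation is that the parity of $a$ in $\Ebar$ is opposite to the parity of its coarse-lattice neighbours: if $a\in 4h\Z$ then $a\pm 2h\in 2h+4h\Z$, and vice versa. So when computing $\iota\circ\dbar$, the two vertex outputs are evaluated in the \emph{other} branch of the definition of $\iota$ on vertices from the branch used for $\xbar_a$ itself.

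Now the calculation in each case is essentially a telescoping cancellation. In the case $a\in 4h\Z$, $\d\iota(\xbar_a)=\d(x_a+x_{a-2h})=(\emptyset_{a+h}-\emptyset_{a-h})+(\emptyset_{a-h}-\emptyset_{a-3h})$, and the inner $\emptyset_{a-h}$ terms cancel, leaving $\emptyset_{a+h}-\emptyset_{a-3h}$. On the other hand $\iota\dbar(\xbar_a)=\iota(\emptyb_{a+2h})-\iota(\emptyb_{a-2h})=\emptyset_{(a+2h)-h}-\emptyset_{(a-2h)-h}$, using the second branch of $\iota$ on vertices, which is exactly the same thing. The case $a\in 2h+4h\Z$ is analogous: $\d(x_{a-h}+x_{a+h})=(\emptyset_{a}-\emptyset_{a-2h})+(\emptyset_{a+2h}-\emptyset_{a})$ telescopes to $\emptyset_{a+2h}-\emptyset_{a-2h}$, matching $\iota\dbar(\xbar_a)$ in this case via the first branch of $\iota$ on vertices.

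There is really no obstacle here; the only thing to be careful about is picking the correct branch of $\iota$ for each vertex in $\iota\dbar(\xbar_a)$, since the parity shifts when one moves by $2h$, and making sure the off-centre shift by $h$ chosen for the odd-vertex subcomplex is consistent in both the chain $x$ output and the vertex $\emptyset$ output so that the endpoints telescope correctly. Once this bookkeeping is in place the identity is immediate from the two computations above.
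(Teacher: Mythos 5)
Your proof is correct: the paper states this lemma without giving a proof, and the direct verification on generators, split into the two parity cases with the observation that the boundary vertices $a\pm2h$ land in the opposite branch of $\iota$ from $a$, is exactly the intended argument. Both of your telescoping computations check out against the formulas $\d(x_a)=\emptyset_{a+h}-\emptyset_{a-h}$ and $\db(\xbar_a)=\emptyb_{a+2h}-\emptyb_{a-2h}$, so nothing is missing.
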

Tensoring $n$ times, $\iota$ induces a chain map $\iota:(\Cbar,\db)\longrightarrow(C,\d)$.

\medskip\noindent Similarly for cochains, identify the space of cochains on the $n$-dimensional lattice as $A=B^{\otimes{}n}$ where $B$ is the cochain complex of the one-dimensional lattice, consisting of 0-cochains $f(x)$ and 1-cochains $f(x)dx$ (which takes value $f(x)$ on the interval whose centre is $x$), using the notation of \S3. On $B$, the coboundary acts $\delta:B_0\longrightarrow{}B_1$ with
\[
\delta{}f=\left(f(x+h)-f(x-h)\right)dx
\]
The integration map $\Bar:B\longrightarrow{}\Bbar$ dual to the above crumbling map on chains is given on 0-cochains by
\[
\overline{f}(x)=\left\{\begin{array}{ll}
\hskip-1.5ex{}f(x)&\hskip-1ex\hbox{if }x\in4h\Z\\
\hskip-1.5ex{}f(x-h)&\hskip-1ex\hbox{if }x\in{}2h+4h\Z
\end{array}
\right.
\]
while on 1-cochains, $f(x)dx\longmapsto{}g(x)dx$ where
\[
g(x)=\left\{\begin{array}{ll}
\hskip-1.5ex\half\left(f(x)+f(x-2h)\right)&\hskip-1ex\hbox{if }x\in4h\Z\\[2pt]
\hskip-1.5ex\half\left(f(x-h)+f(x+h)\right)&\hskip-1ex\hbox{if }x\in{}2h+4h\Z
\end{array}
\right.\]
Tensoring $n$ times gives a cochain map $\Bar:(A,h^{-1}\delta)\longrightarrow(\Abar,(2h)^{-1}\deltabar)$, that is $\half\deltabar\circ\Bar=\Bar\circ\delta$.

\subsection{General theory of morphisms extending chain maps}
Suppose now that $(V,m)$ and $(\Vbar,\mbar)$ are two commutative associative algebras, each endowed with a square-zero map of (the same) grading $\pm1$, $\d:V\longrightarrow{}V$ and $\dbar:\Vbar\longrightarrow{}\Vbar$. Suppose that $\Bar:V\longrightarrow\Vbar$ is a chain map respecting the grading, that is $\dbar\overline{x}=\overline{(\d{}x)}$ for $x\in{}V$. Note that there is no assumption about the compatibility or otherwise of $\Bar$ with the algebra structures on $V$ and $\Vbar$.

\medskip\noindent Following the construction of Theorem 2.10, we generate coderivations $D$ and $\Dbar$ on $S^*V$ and $S^*\Vbar$ respectively, each of square-zero, whose Taylor coefficients describe the infinitesimal cumulants of $\d$ and $\dbar$, with respect to $m$ and $\mbar$, respectively. Let $S(\Bar):S^*V\longrightarrow{}S^*\Vbar$ be the image of $\Bar$ under the functor $S$ from chain complexes to differential coalgebras; that is
\[S(\Bar):\ v_1\wedge\ldots\wedge{}v_k\longmapsto\vbar_1\wedge\ldots\wedge\vbar_k\]
This is a coalgebra map and it follows from the fact that $\Bar$ is a chain map, that
\[S(\Bar)\circ{}\d^\wedge=\dbar^\wedge\circ{}S(\Bar)\]
Using the cumulant bijections $\tau:S^*V\longrightarrow{}S^*V$ and $\taubar:S^*\Vbar\longrightarrow{}S^*\Vbar$ which are coalgebra maps, the composition
\[\sigma\equiv\taubar^{-1}\circ{}S(\Bar)\circ\tau:\ S^*V\longrightarrow{}S^*\Vbar\]
is seen to be a coalgebra map of degree $0$.

\medskip\begin{lemma} The map $\sigma:S^*V\longrightarrow{}S^*\Vbar$ defines a coalgebra morphism with $\sigma\circ{}D=\Dbar\circ\sigma$.
\end{lemma}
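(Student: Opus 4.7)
The plan is to verify the two assertions separately: that $\sigma$ is a coalgebra map, and that $\sigma\circ D=\Dbar\circ\sigma$. Both reduce to formal assembly, since $\sigma$ has been defined precisely as a composition of three maps each of which is compatible with the relevant structure.

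For the coalgebra property, $\tau:S^*V\longrightarrow S^*V$ is a coalgebra map by its defining construction as the coalgebra lift of $\tau_1$ (\S2.2), and hence so is its inverse; similarly for $\taubar^{-1}$. The middle map $S(\Bar)$ sends $v_1\wedge\cdots\wedge v_k$ to $\vbar_1\wedge\cdots\wedge\vbar_k$, and since $\Bar$ preserves the $\Z$-grading, the signs $\pm^I$ of formula $(1)$ in \S2.1 agree on $V$ and on $\Vbar$, so $S(\Bar)$ intertwines $\Delta$ with $\Deltab$. A composition of coalgebra maps is a coalgebra map, so $\sigma$ is one.

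For the intertwining identity $\sigma\circ D=\Dbar\circ\sigma$, I would first upgrade the chain-map identity $\dbar\circ\Bar=\Bar\circ\d$ to its symmetric-power lift $S(\Bar)\circ\d^\wedge=\dbar^\wedge\circ S(\Bar)$. This follows term by term from the explicit formula for $\d^\wedge$ in \S2.3: grading-preservation of $\Bar$ forces the Koszul signs $(-1)^{\sum_{j<i}|v_j|}$ to agree on the two sides. The main identity is then a one-line manipulation using $D=\tau^{-1}\circ\d^\wedge\circ\tau$ and $\Dbar=\taubar^{-1}\circ\dbar^\wedge\circ\taubar$:
\[
\sigma\circ D=\taubar^{-1}\circ S(\Bar)\circ\tau\circ\tau^{-1}\circ\d^\wedge\circ\tau=\taubar^{-1}\circ S(\Bar)\circ\d^\wedge\circ\tau=\taubar^{-1}\circ\dbar^\wedge\circ S(\Bar)\circ\tau,
\]
and inserting $\taubar\circ\taubar^{-1}$ between $\dbar^\wedge$ and $S(\Bar)$ rewrites this as $(\taubar^{-1}\circ\dbar^\wedge\circ\taubar)\circ(\taubar^{-1}\circ S(\Bar)\circ\tau)=\Dbar\circ\sigma$.

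The hard part, if there is one, is only bookkeeping: the passage from $\Bar$ being a grading-respecting chain map to $S(\Bar)$ intertwining $\d^\wedge$ and $\dbar^\wedge$ requires checking that the Koszul signs propagate correctly through the symmetric-power construction. Everything else is the general principle that conjugation by a coalgebra isomorphism carries one square-zero coderivation to another, which is why $D$ and $\Dbar$ were built from $\d^\wedge$ and $\dbar^\wedge$ via precisely such a conjugation in the first place.
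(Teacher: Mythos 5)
Your proof is correct and is essentially the paper's own argument: the paper's entire proof is a "commuting cube diagram" whose faces encode exactly your three facts (composition of coalgebra maps is a coalgebra map, $S(\Bar)$ intertwines $\d^\wedge$ with $\dbar^\wedge$ because $\Bar$ is a grading-preserving chain map, and conjugation by $\tau$, $\taubar$ converts this to the statement about $D$ and $\Dbar$). You have simply written out in formulas what the paper leaves as a diagram.
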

The proof is the commuting cube diagram below.
\[\includegraphics[width=.3\textwidth]{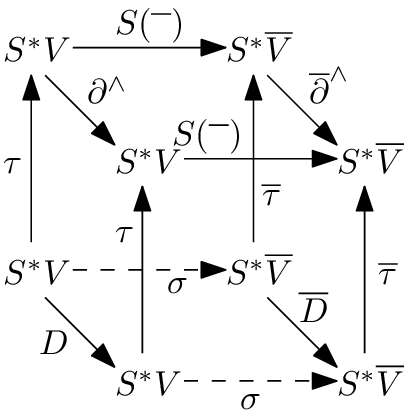}
\]
As with any coalgebra map, $\sigma$ is determined by its Taylor coefficients, which we denote $\sigma_k:S^kV\longrightarrow\Vbar$. We will now give an explicit formula for the action of $\sigma_k$ along with a recursion relation, which will use in the proof of estimates in \S7.4. These will be analogues of Lemma 2.6 and Lemma 2.7.

\begin{lemma}
\[\sigma_k(v_1\wedge\cdots\wedge{}v_k)
=\sum\limits_{r=1}^k(-1)^{r-1}(r-1)!
\sum\limits_{I_1\cup\cdots\cup{}I_r=[k]}\pm\taubar_1\Big(\overline{\tau_1(v_{I_1})}\wedge\cdots\wedge\overline{\tau_1(v_{I_r})}\Big)\]
where the inner summation is over all (unordered) partitions of $[k]$ into $r$ subsets, the sign is that determined by the change of order of symbols $v_1\wedge\cdots\wedge{}v_k
=\pm{}v_{I_1}\wedge\cdots\wedge{}v_{I_r}$.
\end{lemma}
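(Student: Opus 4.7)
The plan is to unravel the definition $\sigma=\taubar^{-1}\circ S(\Bar)\circ\tau$ directly and project onto the first component using the explicit formulas from Lemma 2.3. The key observation is that Lemma 2.3 already provides what we need on both sides: the forward expansion of $\tau$ as a sum over partitions, and the corollary $p_1\circ\tau^{-1}|_{S^r V}=(-1)^{r-1}(r-1)!\,\tau_1$. Applying the latter to $\taubar$ means that after we push the expansion of $\tau$ through $S(\Bar)$, the final projection by $p_1\circ\taubar^{-1}$ collapses immediately to a single factor $\taubar_1$ with a known scalar coefficient.

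Concretely, the first step is to apply Lemma 2.3 to $\tau(v_1\wedge\cdots\wedge v_k)$, writing it as
\[
\tau(v_1\wedge\cdots\wedge v_k)=\sum_{r=1}^k\sum_{I_1\cup\cdots\cup I_r=[k]}\pm\,\tau_1(v_{I_1})\wedge\cdots\wedge\tau_1(v_{I_r}),
\]
where the sum is over unordered partitions and the sign is the Koszul sign produced by reordering the $v_i$'s. The second step is to push this through $S(\Bar)$. Since $S(\Bar)$ is the coalgebra map induced by the functor $S$ applied to the chain map $\Bar$, it acts as the identity on the combinatorial (wedge) structure and simply replaces each factor $\tau_1(v_{I_p})\in V$ by $\overline{\tau_1(v_{I_p})}\in\Vbar$; the Koszul signs pass through unchanged because $\Bar$ preserves the grading.

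The third and final step is to apply $p_1\circ\taubar^{-1}$ to each term $\overline{\tau_1(v_{I_1})}\wedge\cdots\wedge\overline{\tau_1(v_{I_r})}\in S^r\Vbar$. By the ``In particular'' clause of Lemma 2.3 (applied to $\taubar$), this projection equals $(-1)^{r-1}(r-1)!\,\taubar_1\bigl(\overline{\tau_1(v_{I_1})}\wedge\cdots\wedge\overline{\tau_1(v_{I_r})}\bigr)$. Substituting gives the stated formula. That $\sigma_k=p_1\circ\sigma$ is, in turn, a consequence of $\sigma$ being a coalgebra map, as established in the lemma preceding the statement.

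Since all three steps are immediate invocations of results already in hand, there is no substantial obstacle; the only item requiring care is the bookkeeping of signs. The sign $\pm$ in the final formula is precisely the one produced by reordering $v_1,\ldots,v_k$ into the blocks $v_{I_1},\ldots,v_{I_r}$ in Step 1, which survives unchanged through the grading-preserving map $S(\Bar)$ in Step 2 and is untouched by the scalar action of $p_1\circ\taubar^{-1}$ in Step 3. No additional signs are introduced in passing to $\taubar_1$, since $\taubar_1$ is simply the iterated product on $\Vbar$.
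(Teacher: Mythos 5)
Your proof is correct and follows exactly the paper's own argument: expand $\tau$ via Lemma 2.3, pass the result through $S(\Bar)$ factor by factor, and collapse with $p_1\circ\taubar^{-1}=(-1)^{r-1}(r-1)!\,\taubar_1$ on $S^r\Vbar$. The sign bookkeeping you describe also matches the paper's.
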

\begin{proof}
By definition, $\sigma=\taubar^{-1}\circ{}S(\Bar)\circ\tau$. Its $k^{th}$ Taylor coefficient is $\sigma_k=(p_1\circ\taubar^{-1}\circ{}S(\Bar)\circ\tau)|_{S^kV}$
which we compute from the first part of Lemma 2.3,
\[
\sigma_k(v_1\wedge\cdots\wedge{}v_k)
=(p_1\circ\taubar^{-1}\circ{}S(\Bar))\sum\limits_{r=1}^k
\sum\limits_{I_1\cup\cdots\cup{}I_r=[k]}
\pm\tau_1(v_{I_1})\wedge\cdots\wedge\tau_1(v_{I_r})
\]
where the sum is over all (unordered) partitions of $[k]$ into $r$ non-empty subsets, and the sign is as in the statement of the lemma. Recalling the definition of $S(\Bar)$ and applying the equality $p_1\circ\taubar^{-1}=d_r\cdot\taubar_1$ on $S^rV$ (Lemma 2.3), the statement of the lemma follows.
\end{proof}
\begin{remark}
The last lemma identifies $\sigma_k$ with the $k^{th}$ (commutative) cumulant of $\Bar$ with respect to the multiplications $m$ and $\mbar$; see \cite{R2}.  The first few are
\begin{align*}
    \sigma_1(u)&=\ubar\\
    \sigma_2(u\wedge{}v)&=\overline{uv}-\ubar\cdot\vbar\\
    \sigma_3(u\wedge{}v\wedge{}w)&=\overline{uvw}-\overline{uv}\cdot\wbar-\ubar\cdot\overline{vw}-(-1)^{|u|\cdot|v|}\vbar\cdot\overline{uw}+2\ubar\cdot\vbar\cdot\wbar
    \end{align*}
\end{remark}

\begin{lemma} For $u,v\in{}V$ and $w\in{}S^kV$,
\[\sigma_{k+2}(u\wedge{}v\wedge{}w)
=\sigma_{k+1}(uv\wedge{}w)
-\sum_{J\subseteq[k]}\pm\sigma_{|J|+1}(u\wedge{}w_J)
\sigma_{|J^c|+1}(v\wedge{}w_{J^c})
\]
where the sum is over all (not necessarily proper) subsets $J$ of $[k]$ and the sign is so that $w_J\wedge{}v\wedge{}w_{J^c}=\pm{}v\wedge{}w$.
\end{lemma}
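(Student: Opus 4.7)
The plan is to expand both sides via the explicit formula for $\sigma_k$ in Lemma~7.2, then match contributions term by term according to partition structure; the argument mirrors the proof of Lemma~2.7 for $\d_k$.

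Index the arguments of $\sigma_{k+2}$ by $[k+2]$, with $1\mapsto u$, $2\mapsto v$ and $\{3,\ldots,k+2\}$ carrying the components of $w$. Applying Lemma~7.2, split the sum over (unordered) partitions $\{I_1,\ldots,I_r\}$ of $[k+2]$ into two cases: (A) the indices $1$ and $2$ lie in a common block, and (B) they lie in distinct blocks.

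In case (A), the block containing both $1$ and $2$ has the form $\{1,2\}\cup J$ for some $J\subseteq\{3,\ldots,k+2\}$, and $\tau_1$ evaluates on it to $uv\cdot\tau_1(w_J)$. The remaining blocks partition the complement, and since $|uv|=|u|+|v|$ the Koszul sign agrees with the one prescribed by Lemma~7.2 applied to $\sigma_{k+1}(uv\wedge w)$. Summing these contributions reproduces exactly $\sigma_{k+1}(uv\wedge w)$, giving the first term on the right hand side.

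In case (B), fix a partition $\mathcal{P}$ of $[k+2]$ with $1\in I_p$, $2\in I_q$, $p\ne q$, and with $r$ total blocks; the corresponding contribution to the left hand side comes with coefficient $(-1)^{r-1}(r-1)!$. On the right hand side, the same partition $\mathcal{P}$ (together with a choice of which of the $r-2$ other blocks belongs to each side) arises from a $J\subseteq[k]$ together with a partition of $\{u\}\cup w_J$ into $s$ blocks and a partition of $\{v\}\cup w_{J^c}$ into $t=r-s$ blocks; the assignment of the $r-2$ remaining blocks contributes a factor of $\binom{r-2}{s-1}$. Lemma~7.2 supplies the coefficient $(-1)^{s-1}(s-1)!\cdot(-1)^{r-s-1}(r-s-1)!$. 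Using the identity $\binom{r-2}{s-1}(s-1)!(r-s-1)!=(r-2)!$, independent of $s$, and summing over $s=1,\ldots,r-1$ yields
\[
\sum_{s=1}^{r-1}(-1)^{r-2}(r-2)! \;=\; (-1)^r(r-1)!.
\]
Negating because of the minus sign on the right hand side of the lemma gives $(-1)^{r-1}(r-1)!$, matching the left hand side.

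The main obstacle is the sign bookkeeping: the Koszul sign from Lemma~7.2 for $\mathcal{P}$ on the left, the signs from the two factor partitions on the right, the sign in the current lemma relating $w_J\wedge v\wedge w_{J^c}$ to $v\wedge w$, and the signs from permuting the graded-commutative product $\taubar_1(\cdots)\cdot\taubar_1(\cdots)$ in $\Vbar$ into a single $\taubar_1$ of the wedge of all factors, must combine to the single sign appearing on the left. This is routine but notationally demanding; once verified, the combinatorial identity above closes the argument.
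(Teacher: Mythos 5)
Your proposal is correct and follows essentially the same route as the paper: expand both sides via the explicit cumulant formula (Lemma~7.3), match the terms where $u,v$ share a block against $\sigma_{k+1}(uv\wedge w)$, and for split blocks verify the coefficient identity $-\sum_{s=1}^{r-1}\binom{r-2}{s-1}d_sd_{r-s}=d_r$ with $d_r=(-1)^{r-1}(r-1)!$, exactly as in the paper. The sign bookkeeping you defer is likewise left implicit in the paper's own proof.
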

\begin{proof}
By Lemma 7.3, the left hand side can be written as a sum of terms indexed by partitions of  $\{u,v\}\cup[k]$.

If $u$ and $v$ are in the same set in the partition then the corresponding term in $\sigma_{k+2}(u\wedge{}v\wedge{}w)$ will also appear in $\sigma_{k+1}(uv\wedge{}w)$ and with the same coefficient.

On the other hand, if $u$ and $v$ are in different sets in the partition $I_1\cup\cdots\cup{}I_r$, say $u\in{}I_1$ and $v\in{}I_2$, then the corresponding term in the LHS of the equality in the lemma may appear several times in the sum on the RHS. In particular, it will appear in the term on the RHS labelled by the set $J\subseteq[k]$ if $\{u\}\cup{}J$ can be written as a union of some collection of the sets $I_j$ (while $\{v\}\cup{}J^c$ is the union of the complementary collection), say $\cup_{j\in{}K}I_j$ for some $K\subseteq[r]$ where $1\in{}K$ and $2\not\in{}K$. The coefficient of the term is
\[
-\sum_{K\subseteq[r],1\in{}K,2\not\in{}K}d_{|K|}d_{r-|K|}
=(-1)^{r-1}\sum_{m=1}^{r-1}
{{r-2}\choose{m-1}}(m-1)!(r-m-1)!=d_r\]
which matches the coefficient of the same term in $\sigma_{k+2}(u\wedge{}v\wedge{}w)$
\end{proof}

\begin{lemma}
Suppose that $V$, $W$, $\Vbar$ and $\Wbar$ are (graded) commutative associative algebras with given grading preserving maps $\Bar:V\longrightarrow\Vbar$ and $\Bar:W\longrightarrow\Wbar$. Then their tensor product $\Bar:V\otimes{}W\longrightarrow\Vbar\otimes\Wbar$ has $k^{th}$ commutative cummulant $S^k(V\otimes{}W)\longrightarrow\Vbar\otimes\Wbar$ given by
\begin{align*}
&(v_1\otimes{}w_1)\wedge\cdots\wedge(v_k\otimes{}w_k)\\
&\longmapsto\sum_{r=1}^k\sum_{I_1\cup\cdots\cup{}I_r=[k]}\pm\sigma^V_{|I_1|}(v_{I_1})\cdots\sigma^V_{|I_r|}(v_{I_r})\otimes\sigma^W_r(\tau_1(w_{I_1})\wedge\cdots\wedge\tau_1(w_{I_r}))
\end{align*}
in terms of the cummulants $\sigma^V_k:S^kV\longrightarrow\Vbar$ and $\sigma^W_k:S^kW\longrightarrow\Wbar$. Here the inner sum is over all unordered partitions of $[k]$ into $r$ non-empty sets and the sign is that induced by the change of order
\[v_1\wedge{}w_1\wedge\cdots\wedge{}v_k\wedge{}w_k=\pm{}v_{I_1}\wedge\cdots\wedge{}v_{I_r}\wedge{}w_{I_1}\wedge\cdots\wedge{}w_{I_r}\]
\end{lemma}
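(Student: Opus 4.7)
The plan is to apply Lemma 7.3 directly to the tensor product algebra $V \otimes W$ with the map $V \otimes W \longrightarrow \Vbar \otimes \Wbar$ sending $v \otimes w \longmapsto \bar{v} \otimes \bar{w}$, and then to compare the resulting expression with an independent expansion of the right-hand side of the claimed formula. First I would exploit the fact that the multiplication on $V \otimes W$ is, up to a Koszul sign from commuting $W$-factors past $V$-factors, the tensor product of the two multiplications; hence $\tau_1^{V\otimes W}\bigl((v \otimes w)_{i_1} \wedge \cdots \wedge (v \otimes w)_{i_m}\bigr) = \pm\, \tau_1^V(v_I) \otimes \tau_1^W(w_I)$, and similarly for $\taubar_1$. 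Substituting into Lemma 7.3 yields the uncoupled expression
\[
\sigma_k^{V\otimes W} = \sum_{r=1}^k (-1)^{r-1}(r-1)! \sum_{I_1\cup\cdots\cup I_r = [k]} \pm \Bigl(\prod_j \overline{\tau_1(v_{I_j})}\Bigr) \otimes \Bigl(\prod_j \overline{\tau_1(w_{I_j})}\Bigr).
\]

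Next I would expand the right-hand side of the statement of Lemma 7.6 by reapplying Lemma 7.3 to each $\sigma^V_{|I_a|}(v_{I_a})$, which produces a refinement of $I_a$ into subsets $\{L_b\}$, and to $\sigma^W_r(\tau_1(w_{I_1}) \wedge \cdots \wedge \tau_1(w_{I_r}))$, which produces a partition of $[r]$, equivalently a coarsening $\{K_c\}$ of $\{I_a\}$. This yields a triple sum indexed by chains of partitions $\{L_b\} \leq \{I_a\} \leq \{K_c\}$ of $[k]$, with summand $\bigl(\prod_b \overline{\tau_1(v_{L_b})}\bigr) \otimes \bigl(\prod_c \overline{\tau_1(w_{K_c})}\bigr)$ weighted by $\prod_a (-1)^{|B_a|-1}(|B_a|-1)!$ from the $V$-side (where $B_a$ is the set of $L$-blocks sitting inside $I_a$) and $(-1)^{|K|-1}(|K|-1)!$ from the $W$-side.

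To match term-by-term with the previous expression, I would fix partitions $L$ and $K$ of $[k]$ and sum over the intermediate partitions $I$. The sum factors across $K$-blocks: for each $K$-block $K_c$ containing $M_c$ blocks of $L$, the inner sum over partitions of these $M_c$ blocks reduces to $\sum_{\text{partitions of } [M_c]} \prod_B d_{|B|}$ with $d_j = (-1)^{j-1}(j-1)!$. This is exactly the M\"obius-type identity appearing in the proof of Lemma 2.3, where it is shown to equal the coefficient of $x^{M_c}$ in $\sum_{j \geq 1} \tfrac{1}{j!}\bigl(\ln(1+x)\bigr)^j = x$; it vanishes for $M_c > 1$ and equals $1$ for $M_c = 1$. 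Consequently only the diagonal $L = K$ survives, with overall coefficient $(-1)^{|K|-1}(|K|-1)!$, matching the expression from the direct application of Lemma 7.3.

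The main obstacle will be Koszul sign bookkeeping: one must verify that the signs accumulated by reordering wedges according to the partition $\{I_a\}$, and then iteratively by the refinement $\{L_b\}$ and coarsening $\{K_c\}$, agree on the $V$- and $W$-sides with the sign specified in the statement, namely that induced by $v_1 \wedge w_1 \wedge \cdots \wedge v_k \wedge w_k = \pm\, v_{I_1} \wedge \cdots \wedge v_{I_r} \wedge w_{I_1} \wedge \cdots \wedge w_{I_r}$. This ultimately reduces to the single commutation rule $(v_i \otimes w_i)(v_j \otimes w_j) = (-1)^{|v_j|\cdot|w_i|}(v_i v_j) \otimes (w_i w_j)$ applied repeatedly, but the accumulated signs across all the nested partitions require careful accounting.
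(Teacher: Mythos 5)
Your proposal is correct and follows essentially the same route as the paper: expand the right-hand side by two applications of Lemma 7.3 to get a sum over chains of partitions, fix the outer (finest and coarsest) partitions, and collapse the sum over intermediate partitions block-by-block using the M\"obius-type identity $\sum_{\rm partitions}\prod{}d_{|B|}=0$ unless the set is a singleton, exactly as extracted from the end of the proof of Lemma 2.3. The paper likewise leaves the Koszul sign bookkeeping at the level of the observation that the accumulated sign depends only on the reordering of the $v$'s and $w$'s and not on the intermediate partition, so your acknowledged "main obstacle" is no worse than what the published argument settles for.
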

\begin{proof}
Unordered partitions $\pi$ of $[k]$ are in bijection with equivalence relations $\sim$ on the set $[k]$. There is a partial ordering on equivalence relations by which $\sim'$ is said to be {\it stronger} than $\sim$ if whenever $a\sim'b$ then also $a\sim{}b$. The corresponding partial ordering on (unordered) partitions will be written $\pi'\geq\pi$ and we say that $\pi'$ is a {\it finer} partition than $\pi$ (equivalently, $\pi$ is said to be {\it coarser} than $\pi'$, written $\pi\leq\pi'$).

For a partition $\pi$ of $[k]$ represented by $I_1\cup\cdots\cup{}I_r=[k]$, denote by $|\pi|$, the number of equivalence classes (that is, $r$). Denote by $v_\pi\in{}V$, the element $\taubar_1\big(\tau_1(v_{I_1})\wedge\cdots\wedge\tau_1(v_{I_r})\big)$.
In this notation, Lemma 7.3 can be written,
\[\sigma_k(v_1\wedge\cdots\wedge{}v_k)=\sum_{\pi}d_{|\pi|}v_\pi\]
Using Lemma 7.3 to expand out the right hand side of the expression to be proved, we obtain
\[
\sum_\pi\sum_{\pi'\geq\pi}\pm\Big(\prod_{j=1}^{|\pi|}{d_{\alpha_j}}\Big)v_{\pi'}\otimes\sum_{\pi''\leq\pi}d_{|\pi''|}w_{\pi''}\eqno{(*)}
\]
where the sums are over all partitions $\pi$ of $[k]$, finer partitions $\pi'$ and coarser partitions $\pi''$ and $\alpha_j$ ($1\leq{}j\leq|\pi|$) is the number of sets into which $\pi'$ splits the $j^{th}$ set of $\pi$.  The coefficient of a term $v_{\pi'}\otimes{}w_{\pi''}$ for partitions $\pi'$ and $\pi''$ of $[k]$ with $\pi'\geq\pi''$ is
\[
\pm{}d_{|\pi''|}\sum_{\pi:\pi''\leq{}\pi\leq\pi'}\prod_{j=1}^{|\pi|}{d_{\alpha_j}}
\]
where the sign is given by the change of order of $v$'s and $w$'s (which is independent of $\pi$).  If $\beta_i$ ($1\leq{}i\leq|\pi''|$) denotes the number of sets into which $\pi'$ splits the $i^{th}$ set of $\pi''$ then $\pi$ is determined by a choice of partition of $[\beta_i]$ for each $i$. The above sum then splits as a product,
\[
\pm{}d_{|\pi''|}\prod_{i=1}^{|\pi''|}\Big(\sum_{J_1\cup\cdots\cup{}J_s=[\beta_i]}\prod_{l=1}^{s}{d_{|J_l|}}\Big)
\]
By the last statement in the proof of Lemma 2.3, the inner sum vanishes unless $\beta_i=1$ and then it is $1$. The whole expression therefore vanishes unless $\beta_i=1$ for all $i$, that is, $\pi'=\pi''$ and then is $\pm{}d_{|\pi''|}$. The expression (*) therefore reduces to
\[
\sum_{\pi''}\pm{}d_{|\pi''|}v_{\pi''}\otimes{}w_{\pi''}
\]
which is, by Lemma 7.3, the $k^{th}$ cumulant of the tensor product, as required.\end{proof}

\subsection{Application to refinement of cubical lattices}
The construction of \S7.3 now induces from the cochain map $\Bar$ of \S7.2, a corresponding extended morphism from fine to coarse, which intertwines the coderivations $D$ and $\Dbar$ on the symmetric cofree coalgebras of cochains. That is, they `intertwine' the collection of infinitesimal cumulants from the finer and coarser lattices, although the exact relations between individual coarse and fine infinitesimal
cumulants are not so simple, see Remark 7.9 below.

\medskip\begin{lemma}
For $k>1$, the cumulant $\sigma_k:S^kB\longrightarrow\Bbar$ of the map $\Bar$ defined in \S7.2 on the one-dimensional lattice, is divisible by $h^{k-1}$, that is, the components of the cochain $\sigma_k(v_1\wedge\cdots\wedge{}v_k)\in\Bbar$ (with respect to the natural basis) can be written as sums of terms, each of which contains a product of at least $k-1$ factors of the form of a difference in values of components of the cochains $v_1,\ldots,v_k$ at lattice points $2h$ apart.
\end{lemma}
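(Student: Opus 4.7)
My plan is to reduce the claim to the case where exactly one of $v_1,\ldots,v_k$ is a 1-cochain and the rest are 0-cochains, and then to compute that cumulant explicitly in terms of finite differences. By multilinearity of $\sigma_k$, we may assume each $v_i$ is pure of degree 0 or 1. If all $v_i$ are 0-cochains, the restriction of $\Bar$ to $B^0$ is literally pointwise evaluation (at $4h\Z$) or a shifted pointwise evaluation (at $2h{+}4h\Z$), hence an algebra homomorphism; so $\sigma_k=0$ for $k\geq 2$. If two or more $v_i$ are 1-cochains, then every term appearing in Lemma 7.3 vanishes: either two 1-cochains land in a single block $I_l$, forcing $\tau_1(v_{I_l})=0$ because $(dx)^2=0$, or they land in different blocks, forcing $\taubar_1$ of two 1-cochains in $\Bbar$ to vanish for the same reason. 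This leaves only the essential single-1-cochain case.

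So consider $\sigma_k(f_1\wedge\cdots\wedge f_{k-1}\wedge g\,dx)$ (reordered WLOG, since the single odd factor crosses only even factors and picks up no Koszul signs). Fix $x\in 4h\Z$ (the case $x\in 2h{+}4h\Z$ is analogous), and abbreviate $f_i^+=f_i(x)$, $f_i^-=f_i(x{-}2h)$, and $g^\pm$ analogously. For a partition $[k]=I_1\cup\cdots\cup I_r$ with the 1-cochain index $k$ placed in $I_1$, the Lemma 7.3 summand evaluated at the 1-cube centred at $x$ equals
\[
\tfrac12\Bigl(\prod_{l\geq 2}\prod_{i\in I_l}f_i^+\Bigr)\Bigl(\prod_{i\in I_1\setminus\{k\}}f_i^+\,g^++\prod_{i\in I_1\setminus\{k\}}f_i^-\,g^-\Bigr),
\]
weighted by $(-1)^{r-1}(r-1)!$. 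Split each summand into a ``$+$'' half and a ``$-$'' half. The ``$+$'' half equals $\tfrac12\bigl(\prod_{i=1}^{k-1}f_i^+\bigr)g^+$, independent of the partition, and therefore sums to zero under the M\"obius weighting since $\sum_\pi(-1)^{r(\pi)-1}(r(\pi)-1)!=0$ for $k\geq 2$ (compare the end of the proof of Lemma 2.3). For the ``$-$'' half, reparametrize the sum by $J=I_1\setminus\{k\}\subseteq[k-1]$ together with a partition of $[k-1]\setminus J$ into $r-1$ blocks; the inner sum $\sum_{r'\geq 0}(-1)^{r'}r'!\,S(m,r')$ over unordered partitions of a set of size $m=k-1-|J|$ equals $(-1)^m$ by exactly the identity (3) from Lemma 2.3. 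The outer sum then factorizes,
\[
\sigma_k(f_1\wedge\cdots\wedge f_{k-1}\wedge g\,dx)(x)=\tfrac{g^-}{2}\sum_{J\subseteq[k-1]}(-1)^{k-1-|J|}\prod_{i\in J}f_i^-\prod_{i\notin J}f_i^+=\tfrac{g^-}{2}\prod_{i=1}^{k-1}\bigl(f_i^--f_i^+\bigr),
\]
exhibiting exactly $k-1$ explicit differences between values at lattice points $2h$ apart, as required.

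The main obstacle I anticipate is organizational rather than conceptual: verifying that the Koszul signs from reordering wedges with a single odd entry really do collapse as claimed, and running the parallel calculation at $x\in 2h{+}4h\Z$ with the shifted conventions $\bar f(x)=f(x{-}h)$ and $\overline{g\,dx}(x)=\tfrac12(g(x{-}h)+g(x{+}h))$. Both of these are mechanical once the combinatorial identity above is in hand, and the single global sign carried by the 1-cochain does not affect the divisibility count.
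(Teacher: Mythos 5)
Your proposal is correct and follows essentially the same route as the paper: both expand $\sigma_k$ via Lemma 7.3, kill the degree-$0$ components and the "$+$" halves with the partition identity $\sum_r(-1)^{r-1}(r-1)!\,S(k,r)=0$, and use identity (3) to collapse the remaining sum to $\tfrac12\,g^-\prod_{i}(f_i^--f_i^+)$, exhibiting the $k-1$ differences. Your preliminary reduction (all-$0$-cochains vanish because $\Bar$ is multiplicative on $B^0$; two or more $1$-cochains vanish for degree reasons) is a cleaner organization of what the paper does implicitly by extracting coefficients of the localized basis elements, but the core computation is the same.
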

\begin{proof}
  By Lemma 7.3,
  \[\sigma_k(v_1\wedge\cdots\wedge{}v_k)
=\sum\limits_{r=1}^k\tfrac{(-1)^{r-1}}{r}
\sum\limits_{I_1\cup\cdots\cup{}I_r=[k]}\pm\taubar_1\Big(\overline{\tau_1(v_{I_1})}\wedge\cdots\wedge\overline{\tau_1(v_{I_r})}\Big)\]
where the sum is over ordered partitions $I_1\cup\cdots\cup{}I_r$ of $[k]$ into $r$ non-empty subsets. Now $\Bbar$ has a basis consisting of 0-cochains localized at a single point, $\chib_a$, and 1-cochains localized on a single interval (centre $a$) of length $4h$, $\chib_adx$, where $a$ is a multiple of $2h$. Since the formulae for $\Bar$ are invariant under shifts by $4h$, it suffices to compute the coefficients of $\chib_0$, $\chib_{2h}$, $\chib_0dx$ and $\chib_{2h}dx$. For example, to compute the component of $\chib_0$ in $\sigma_k(v_1\wedge\cdots\wedge{}v_k)$, note that
\[
[\taubar_1(w_1\wedge\cdots\wedge{}w_r)]_{\chib_0}\!=\!\prod_{j=1}^r[w_j]_{\chib_0},\quad
[\vbar]_{\chib_0}=[v]_{\chi_0},\quad
[\tau_1(v_1\wedge\cdots\wedge{}v_s)]_{\chi_0}\!=\!\prod_{j=1}^s[v_j]_{\chi_0}
\]
from which we obtain the following expression for $[\sigma_k(v_1\wedge\cdots\wedge{}v_k)]_{\chib_0}$,
\[
\sum\limits_{r=1}^k\tfrac{(-1)^{r-1}}{r}
\sum\limits_{I_1\cup\cdots\cup{}I_r=[k]}\prod_{j=1}^r\Big(\prod_{i\in{}I_j}[v_i]_{\chi_0}\Big)
=\Big(\prod_{i=1}^k[v_i]_{\chi_0}\Big)\cdot\sum_{r=1}^k\frac{(-1)^{r-1}}rN_{k,r}=0\eqno{(10)}
  \]
the latter equality following by a proof similar to that of (3) in \S2.3. Similarly $[\sigma_k(v_1\wedge\cdots\wedge{}v_k)]_{\chib_{2h}}$ vanishes for $k>1$.   To complete the proof of the lemma, we need to check the divisibility by $h^{k-1}$ of components of $\chib_0dx$ and $\chib_{2h}dx$ in $\sigma_k(v_1\wedge\cdots\wedge{}v_k)$. Note that
\begin{align*}
[\taubar_1(w_1\wedge\cdots\wedge{}w_r)]_{\chib_0dx}
&=\sum_{p=1}^r\Big([w_p]_{\chib_0dx}\prod_{j=1\atop{}j\not=p}^r[w_j]_{\chib_0}\Big)\\
[\vbar]_{\chib_0}=[v]_{\chi_0},\quad [\vbar]_{\chib_0dx}&=\tfrac12[v]_{\chi_0dx}+\tfrac12[v]_{\chi_{-2h}dx}\\
[\tau_1(v_1\wedge\cdots\wedge{}v_s)]_{\chi_0}\!=\!\prod_{j=1}^s[v_j]_{\chi_0},\,\,\,
&[\tau_1(v_1\wedge\cdots\wedge{}v_r)]_{\chi_adx}
\!=\!\sum_{p=1}^r\Big([v_p]_{\chi_adx}\prod_{j=1\atop{}j\not=p}^r[v_j]_{\chi_a}\Big)
\end{align*}
from which we obtain the following expression for $[\sigma_k(v_1\wedge\cdots\wedge{}v_k)]_{\chib_0dx}$,
\begin{align*}
\sum\limits_{r=1}^k\tfrac{(-1)^{r-1}}{r}
&\!\!\!\sum\limits_{I_1\cup\cdots\cup{}I_r=[k]}
\sum_{p=1}^r\frac12\prod_{j=1\atop{j\not=p}}^r\Big(\prod_{i\in{}I_j}[v_i]_{\chi_0}\Big)\\
&\cdot\Big(\sum_{q\in{}I_p}\big([v_q]_{\chi_0dx}\prod_{i\in{}I_p\bs\{q\}}[v_i]_{\chi_0}+[v_q]_{\chi_{-2h}dx}\prod_{i\in{}I_p\bs\{q\}}[v_i]_{\chi_{-2h}}\big)\Big)\quad(11)
\end{align*}
This is totally symmetric under permutation of the $v_i$'s and depends linearly on the coefficients of the localized 1-cochains $\chi_0dx$ and $\chi_{-2h}dx$ in $v_1,\cdots,v_k$. In particular, the coefficient of $[v_k]_{\chi_0dx}$ in (11) vanishes, reducing to a sum of a form similar to (10). Meanwhile, the coefficient of $[v_k]_{\chi_{-2h}dx}$ in (11) is
\[
\sum\limits_{r=1}^k\!\tfrac{(-1)^{r-1}}{2r}\!\sum_{p=1}^r
\!\sum\limits_{I_1\cup\cdots\cup{}I_r=[k]\atop{}k\in{}I_p}\prod_{i\in{}I_p\bs{}k}[v_i]_{\chi_{-2h}}\cdot\!\!\!
\prod_{i\in{}\cup_{j\not=p}I_j}[v_i]_{\chi_0}
\]
Noting that the internal summand depends on the partition $\{I_j\}$ only via $I_p$, we can rewrite it as $\sum_{r=1}^k\half(-1)^{r-1}\!\sum_{k\in{}I\subseteq[k]}N_{k-|I|,r-1}\!
\!\prod_{i\in{}I\bs{}k}[v_i]_{\chi_{-2h}}\cdot
\!\prod_{i\in{}I^c}[v_i]_{\chi_0}$, which can be identified as $\tfrac12\big(\frac{\partial}{\partial[v_k]_{\chi_{-2h}}}\big)$ of the expression
\[
\sum\limits_{r=1}^k(-1)^{r-1}
\sum_{I\subseteq[k]}N_{k-|I|,r-1}
\prod_{i\in{}I}[v_i]_{\chi_{-2h}}\cdot
\prod_{i\in{}I^c}[v_i]_{\chi_0}\eqno{(12)}
  \]
Splitting the sum over $I$ according to order and recalling from \S2.3, (3) that $\sum_{r=1}^{k}(-1)^{r-1}N_{k-s,r-1}=(-1)^{k-s}$, the expression (12) reduces to
\[\sum_{s=0}^k(-1)^{k-s}\,\sum_{I\subseteq[k],|I|=s}\,
\prod_{i\in{}I}[v_i]_{\chi_{-2h}}\cdot
\prod_{i\in{}I^c}[v_i]_{\chi_0}
=\prod_{i=1}^k([v_i]_{\chi_{-2h}}-[v_i]_{\chi_0})\]
Thus, its (partial) derivative as used above, is a product of $(k-1)$ differences of the form $[v_i]_{\chi_{-2h}}-[v_i]_{\chi_0}$, $i\not=k$, and consequently $[\sigma_k(v_1\wedge\cdots\wedge{}v_k)]_{\chib_0dx}$ can be written as a sum of terms, each of which is a product of $k$ factors, $k-1$ of which being differences of this form; hence it is divisible by $h^{k-1}$.

The proof that $[\sigma_k(v_1\wedge\cdots\wedge{}v_k)]_{\chib_{2h}dx}$ is divisible by $h^{k-1}$ is identical to the previous proof, with $\chi_0$, $\chi_{-2h}$ replaced by $\chi_h$, $\chi_{3h}$.
\end{proof}

\begin{lemma}
Extending $\Bar$ from the cochains $B$ on the  dimension one lattice to the cochains $A$ on the  dimension $n$ lattice by tensor product, the  cummulant $\sigma_k:S^kA\longrightarrow\Abar$ of the map $\Bar$ so defined is divisible by $h^{k-1}$ for $k>1$.
\end{lemma}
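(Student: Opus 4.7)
The plan is to proceed by induction on the lattice dimension $n$, taking the base case $n=1$ directly from Lemma~7.7. For the inductive step, write $A=A_{n-1}\otimes B$ where $A_{n-1}\cong B^{\otimes(n-1)}$ is the cochain algebra on an $(n-1)$-dimensional sublattice, and similarly $\Abar=\Abar_{n-1}\otimes\Bbar$. The map $\Bar:A\longrightarrow\Abar$ is, by the definition given in \S7.2, the tensor product of the two corresponding maps on the factors. Apply Lemma~7.6 with $V=A_{n-1}$, $W=B$ to express $\sigma^A_k$ on a decomposable element $(v_1\otimes w_1)\wedge\cdots\wedge(v_k\otimes w_k)$ as a signed sum, indexed by unordered partitions $I_1\cup\cdots\cup I_r=[k]$, of terms
\[
\pm\,\sigma^{A_{n-1}}_{|I_1|}(v_{I_1})\cdots\sigma^{A_{n-1}}_{|I_r|}(v_{I_r})\otimes\sigma^B_r\bigl(\tau_1(w_{I_1})\wedge\cdots\wedge\tau_1(w_{I_r})\bigr).
\]

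Next I would read off the $h$-divisibility term by term. By the inductive hypothesis, $\sigma^{A_{n-1}}_{|I_j|}$ is divisible by $h^{|I_j|-1}$ whenever $|I_j|>1$; when $|I_j|=1$ this reduces to the trivial statement of divisibility by $h^0$. Multiplying over $j$, the left tensor factor is divisible by
\[
h^{\sum_{j=1}^{r}(|I_j|-1)}=h^{k-r}.
\]
By Lemma~7.7 applied to the one-dimensional cochain complex $B$ (and trivially for $r=1$), the right tensor factor $\sigma^B_r(\cdots)$ is divisible by $h^{r-1}$. The products of cochains $\tau_1(w_{I_j})$ inside $\sigma^B_r$ do not affect this: Lemma~7.7 holds for $\sigma_k$ applied to any $k$-tuple of cochains. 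Hence every partition-indexed term of the sum carries an overall factor of $h^{(k-r)+(r-1)}=h^{k-1}$, uniformly in $r$, and the same divisibility therefore holds for $\sigma^A_k$.

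The main thing to verify is that the arithmetic $(k-r)+(r-1)=k-1$ genuinely uses both factors of divisibility in a compatible way across all the edge cases: $r=1$ (singleton outer partition, full divisibility supplied by the inductive hypothesis on $\sigma^{A_{n-1}}_k$), $r=k$ (all singletons $|I_j|=1$, full divisibility supplied by Lemma~7.7 on $\sigma^B_k$), and every intermediate $r$ where the two sources share the burden. No further obstacle remains, since divisibility of a finite sum is implied by divisibility of each summand.
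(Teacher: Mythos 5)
Your proof is correct and is essentially the paper's own argument: the paper likewise combines Lemma~7.6 with Lemma~7.7, observing that the term indexed by a partition $I_1\cup\cdots\cup I_r$ has $h$-order $\sum_{j=1}^r(|I_j|-1)+(r-1)=k-1$, and iterates over the $n$ tensor factors. Your explicit handling of the edge cases $r=1$ and $r=k$ and the remark that Lemma~7.7 applies to arbitrary arguments (including the products $\tau_1(w_{I_j})$) only spell out what the paper leaves implicit.
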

\begin{proof}
By Lemma 7.6, the tensor product of any two structures for which the $k^{th}$ cumulants are divisible by $h^{k-1}$ will also satisfy this property, since the term in the sum in Lemma 7.6 indexed by the partition $I_1\cup\cdots\cup{}I_r$ will have $h$-order,
\[
\sum_{j=1}^r(|I_j|-1)+(r-1)=k-1
\]
Combining with Lemma 7.7, the result follows.
\end{proof}

\begin{remark}
The conclusion is that for adjacent scales, the integration map $\Bar$ between them $A\longrightarrow\Abar$, provides an induced map (given by composition with cumulant bijections and whose Taylor coefficients are identified by Lemma 7.3 with commutative cumulants) $\sigma:S^*A\longrightarrow{}S^*\Abar$ which satisfies $\sigma\circ{}D=\Dbar\circ\sigma$ as maps (coderivations) $S^*A\longrightarrow{}S^*\Abar$. The equality of Taylor coefficients of the maps on the two sides of this equation gives the exact relation between the systems of $k$-brackets (infinitesimal cumulants) at the coarse and fine scales, namely a collection of relations,
\begin{align*}
    [\sigma_1,\delta'_1]&=0\\
    [\sigma_1,\delta'_2]+[\sigma_2,\delta'_1]&=0\\
    \cdots&\\
    [\sigma_1,\delta'_k]+[\sigma_2,\delta'_{k-1}]+\cdots+[\sigma_k,\delta'_1]&=0
\end{align*}
 where the $k^{th}$ equation is an equality between maps $S^kV\longrightarrow\Vbar$ ($\sigma$'s extended as coalgebra maps and $\delta$'s extended as coderivations). Here an extended notion of commutators is being used whereby for a map $b:S^*V\longrightarrow{}S^*\Vbar$ we denote by $[b,x]$ the difference $b\circ{}x-\overline{x}\circ{}b$, which is a map $S^*V\longrightarrow{}S^*\Vbar$, for any symbol $x$ which represents a map $S^*V\longrightarrow{}S^*V$ and for which there is an analogous map on the `bar'-side $\xbar:S^*\Vbar\longrightarrow{}S^*\Vbar$.

The first equation is just the statement that $\Bar$ is a cochain map. The second relation in long-hand states that
\[\overline{[v,w]}-[\vbar,\wbar]
=\deltabar'\circ\sigma_2\big(v\wedge{}w)-\sigma_2(\delta'{}v\wedge{}w+(-1)^{|v|}v\wedge\delta'{}w\big)
\]
\end{remark}

The above collection of relations between the $k$-brackets $\delta'_k$ and $\deltabar'_k$ (which are of order $h^{k-1}$ for $k>1$, by Theorem 4.1) defines the collection of maps $\sigma_k:S^kA\longrightarrow\Abar$ (of order $h^{k-1}$ for $k>1$ by Lemma 7.8) to be a morphism (in the sense of Definition 6.2) between the two binary QFT algebras $A$ and $\Abar$ (of Theorem 6.4). Hence we obtain a formulation of Theorem 1.3(B) as follows.

\begin{theorem}
Considering the binary QFT algebra $(A(h),\wedge,\delta')$ of Theorem 6.4 for scales $h=h_0\cdot2^{-m}$, $m\in\N$, gives rise to an inverse system of  binary QFT algebras  related by  binary QFT morphisms between the structures at the various scales.
\end{theorem}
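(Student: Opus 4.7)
The plan is to assemble the content of \S\S6--7 into the inverse-system statement. What needs to be verified is: (i) at each scale $h_m=h_0\cdot 2^{-m}$ one has a binary QFT algebra; (ii) between consecutive scales there is a binary QFT morphism in the sense of Definition 6.2; and (iii) these morphisms compose consistently.

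Item (i) is immediate from Theorem 6.4 applied at scale $h=h_m$. For item (ii), given adjacent scales $h_{m+1}$ (fine) and $h_m=2h_{m+1}$ (coarse), I would take the integration map $\Bar:A(h_{m+1})\longrightarrow\Abar=A(h_m)$ of \S7.2, which is a cochain map intertwining the $\deltap$ operators, and apply the functor of \S7.3 to produce
\[
\sigma\equiv\taubar^{-1}\circ S(\Bar)\circ\tau:\ S^*A(h_{m+1})\longrightarrow S^*A(h_m).
\]
Lemma 7.2 furnishes the intertwining $\sigma\circ D=\Dbar\circ\sigma$; Remark 7.4 identifies the $k^{th}$ Taylor component of $\sigma$ with the commutative $k$-cumulant of $\Bar$; and Lemma 7.8 shows that this cumulant is divisible by $h_{m+1}^{k-1}$. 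These are precisely the two conditions of Definition 6.2, so $\sigma$ qualifies as a binary QFT morphism from scale $m+1$ to scale $m$.

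For item (iii) I would check that for $m<m'<m''$ the composite $\sigma^{m',m}\circ\sigma^{m'',m'}$ coincides with the morphism $\sigma^{m'',m}$ built directly between the outer scales. The underlying integration maps compose functorially, $\Bar^{m',m}\circ\Bar^{m'',m'}=\Bar^{m'',m}$, since the one-dimensional crumbling duals respect nested subdivisions and the $n$-dimensional maps are obtained by tensor product. The intermediate cumulant bijections $\tau$ and $\taubar^{-1}$ on $S^*A(h_{m'})$ are inverse to one another on the same coalgebra, so they cancel, leaving
\[
\sigma^{m',m}\circ\sigma^{m'',m'}=\taubar^{-1}\circ S(\Bar^{m',m}\circ\Bar^{m'',m'})\circ\tau=\taubar^{-1}\circ S(\Bar^{m'',m})\circ\tau=\sigma^{m'',m}.
\]

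The hard work is in fact already accomplished: the delicate combinatorial cancellations of Lemmas 7.7 and 7.8 (the vanishing of the $\chib_0$ and $\chib_{2h}$ components, together with the factorisation of the $\chib_0 dx$ and $\chib_{2h}dx$ components as products of $k-1$ lattice-spaced differences) are what force the $h^{k-1}$-divisibility of the commutative cumulants of $\Bar$, and Theorem 6.4 with the functorial machinery of \S7.3 handles the rest. The present theorem is then a matter of bundling: each scale is a binary QFT algebra, the integration-induced $\sigma$ is a binary QFT morphism between adjacent scales, and composition across non-adjacent scales is automatic by functoriality of $S(\cdot)$ together with the cancellation of the scale-invariant cumulant bijections.
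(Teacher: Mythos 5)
Your proposal is correct and follows essentially the same route as the paper: the theorem is obtained by bundling Theorem 6.4 (each scale is a binary QFT algebra), the integration map of \S7.2 with Lemma 7.2 for the intertwining $\sigma\circ D=\Dbar\circ\sigma$, Lemma 7.3/Remark 7.4 identifying the Taylor components of $\sigma$ with the commutative cumulants of $\Bar$, and Lemma 7.8 for the $h^{k-1}$-divisibility required by Definition 6.2. Your item (iii), the explicit verification that the transition morphisms compose via functoriality of $S(\cdot)$ and cancellation of the intermediate cumulant bijections, is a point the paper leaves implicit, and it is a correct and welcome addition.
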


The chain map $\iota$ of \S7.2 from the coarse chain complex $(\Cbar,\dbar)$ to the fine chain complex $(C,\d)$, will according to \S7.3 induce maps $S^*\Cbar\longrightarrow{}S^*C$ which intertwine the infinitesimal cumulants at the coarse and fine levels of $(C,\d)$ of Theorem 5.2. The following theorem is deduced by exactly similar arguments to those in the proof of Theorem 7.10, based on direct computation of the  cumulants of the explicit chain map using Lemmas 7.3 and 7.6.

\begin{theorem}
Considering the binary QFT algebra $(C(h),\wedge,\d)$ of Theorem 6.3 for scales $h=h_0\cdot2^{-m}$, $m\in\N$, gives rise to  a direct system of binary QFT algebras related by binary QFT morphisms between the structures at the various scales.
\end{theorem}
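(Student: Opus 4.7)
The plan is to mirror the proof of Theorem~7.10 by duality. Applying the construction of \S7.3 to the chain map $\iota:(\Cbar,\dbar)\to(C,\d)$ of \S7.2 produces a coalgebra morphism $\sigma\equiv\tau^{-1}\circ{}S(\iota)\circ\taubar:S^*\Cbar\to{}S^*C$; by Lemma~7.2 this intertwines the coderivations $\Dbar$ and $D$ that encode the infinitesimal cumulant brackets on either side, and by Lemma~7.3 the Taylor coefficients $\sigma_k:S^k\Cbar\to{}C$ are given by the commutative cumulant formula for $\iota$. The remaining task is to verify that each $\sigma_k$ is divisible by $h^{k-1}$ in the sense of Definition~6.2, which is the chain-side analogue of Lemma~7.8.

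The verification proceeds in two steps. First I would handle the one-dimensional map $\iota_E:\Ebar\to{}E$ of \S7.2, establishing the analogue of Lemma~7.7. Expanding $\sigma_k(v_1\wedge\cdots\wedge{}v_k)$ by Lemma~7.3 and organizing the resulting sum by the coset of $L_h/L_{2h}$ containing each basis index of the output, I expect the four-case breakdown familiar from Lemma~7.7. On vertex ($0$-chain) components, where $\iota_E$ acts by a pure shift, the partition sum should collapse via the alternating-sum identity~(3) of \S2.3. On interval ($1$-chain) components, after again invoking~(3), the partition sum is expected to factor as a product of $k$ terms in which $k-1$ are differences of input chain coefficients at $2h$-apart coarse centers; such differences are divisible by $h$ via the relation $T'_uc-c=h\Delta_uc$ built into $C(h)$. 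Second, passage from one to $n$ dimensions is immediate from Lemma~7.6 applied to $\iota=\iota_E^{\otimes{}n}$: each term of the partition expansion carries total $h$-order $\sum_{j=1}^r(|I_j|-1)+(r-1)=k-1$, so the $h^{k-1}$-divisibility propagates. Combined with Theorem~6.3, this exhibits $\sigma$ as a binary QFT morphism between $(\Cbar(h),\wedge,\dbar)$ and $(C(h),\wedge,\d)$, and iterating across the tower $h=h_0\cdot{}2^{-m}$ delivers the claimed direct system.

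The main obstacle is the one-dimensional divisibility step. The crumbling formulas $\iota_E(\xbar_a)=x_a+x_{a-2h}$ for $a\in{}4h\Z$ versus $x_{a-h}+x_{a+h}$ for $a\in{}2h+4h\Z$ are parity-dependent, and the chain multiplication is local at each lattice center so that cross-center products vanish outright. This makes the bookkeeping of which output basis chains receive contributions, and from which coarse centers, somewhat different from the cochain averaging case of Lemma~7.7. Once the case-by-case analysis has been carried out and the alternating-sum identity~(3) of \S2.3 used to extract the required $k-1$ differences of $2h$-separated coefficients, the tensor extension via Lemma~7.6 and the assembly into a direct system of binary QFT algebras are formal repetitions of the arguments behind Theorem~7.10.
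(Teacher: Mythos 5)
Your proposal is correct and follows essentially the same route as the paper, which itself only states that Theorem 7.11 ``is deduced by exactly similar arguments to those in the proof of Theorem 7.10, based on direct computation of the cumulants of the explicit chain map using Lemmas 7.3 and 7.6.'' You in fact supply more detail than the paper does, correctly identifying the one-dimensional divisibility computation for the crumbling map (the chain-side analogue of Lemma 7.7, with the pointwise locality of the chain product as the bookkeeping difference) as the only step requiring new work before Lemma 7.6 and Definition 6.2 finish the argument.
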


\bigskip{\bf Background Appendix}

 At the continuum level there are two product  structures on  distinct but isomorphic spaces which  play different roles. The one on differential forms is familiar in  topology because of integration and  mapping properties.  The  other on multivector fields is  familiar in differential geometry. There is a third contraction product
between a $k$-differential form and a $j$-multivector field  giving a $(j-k)$-multivector field if $j>k$,
a $(k-j)$-differential form if $k>j$ and a function if $j=k$ which by definition is both a differential form  and a  multivector field.

The  $j>k$  case of the contraction product has an interpretation in algebraic topology, {\it e.g.}\ Whitney's cap product, while the $k>j$ case is used often in differential geometry, {\it e.g.} in Cartan's magic formula.

\medskip\textbf{One algebra structure}

 The integration of  a $k$-differential form  over an oriented $k$-surface  defines a $k$-cochain, {\it i.e.} a linear function on $k$-chains, the term used  for  linear combinations of oriented $k$-surfaces.  There is the geometric boundary map on chains called $\d$, taking a $(k+1)$-chain to a $k$-chain,  and thus a dual operator on cochains referred to here as $\delta$, taking a $k$-cochain to a $(k+1)$-cochain. Poincar\'e calculated and defined the general exterior $d$ on differential forms and showed $dd=0$. Stokes' theorem says $\int:(k-\hbox{forms})\longrightarrow(k-\hbox{cochains})$   intertwines  Poincare's  exterior  $d$  acting on forms or integrands  with $\delta$ acting on cochains. This was Poincar\'e's picture {\it c.}1900 and marked the birth of algebraic topology.

    Using exterior or Grassmann algebra, differential forms have a fully defined  graded commutative and associative algebra structure and a differential $d$ of square zero which  is a derivation of this product.  It is  of great advantage that  this  differential algebraic structure on forms  is natural for all appropriate mappings between manifolds. See \cite{PS}, \cite{S77} and \cite{W}.

      One knows in topology that it is impossible to  break  space into  finite  cells and to construct a finite-dimensional algebraic structure  on the  cochains  that reflects perfectly the properties of $d$ and this product on differential forms. Commutativity, associativity and Leibniz cannot all be attained in a finite-dimensional setting (preserving homology) and so the fact that these do all hold for differential forms has forced that space of such to be infinite-dimensional.

  \break\textbf{The other algebra structure}

  Secondly, at the continuum level,  there is the exterior product  structure on the multivector fields, by definition the cross sections of the exterior powers of the tangent bundle. This product is similar in appearance to the  graded algebra structure  of differential forms. But it has a quite different
   meaning.

 Given a volume  measure on the ambient manifold, $k$-multivector fields via contraction  with $k$-differential forms followed by integration of the resulting function, define elements in the dual space  of $k$-differential forms. There is thus defined a dual operator  of degree $-1$,  call it  $\d$,  defined by  linear duality from the exterior derivative $d$ on forms.  This $\d$ is consistent with the  geometric  boundary operator $\d$ on chains  mentioned above. Thus the multivector fields have a  graded commutative associative product and a square zero operator of degree $-1$, in the presence of a volume  measure. This structure is natural for diffeomorphisms that preserve volume.

  For a manifold of dimension $d$, contracting multivector fields with a fixed volume form yields an isomorphism between
multivector fields and  differential forms which relates  $\partial$ to
 exterior $d$, providing a  geometric version of Poincar\'e duality.

    There is a difference between product rules for the two structures on differential forms and on  multi-vector currents that arises because $d$ is a derivation of its product,  the exterior product of forms,  while  $\partial$ is not a first order derivation.

   A bracket operation on multivector fields is defined as the deviation of $\d$ from being a  derivation of the  exterior product of multi-vector fields. By construction, $\d$
   is a derivation of this bracket. It turns out, again remarkably, that this deviation bracket is itself  a derivation of the  exterior product in one of its arguments with the other held fixed. This is called the \textbf {\sl second order derivation property} of the continuum $\d$ relative to its product.  It is noteworthy that this second order property  plus $\d^2=0$, implies the Jacobi identity holds for this bracket of multi-vector fields.

   One may note that graded commutative associative algebras with a differential which is a second order derivation were studied in a beautiful paper \cite{Koszul} and eventually dubbed $BV$ algebras because of the perturbative quantum theory algorithms given in \cite{BV}. This $BV$ perspective at the finite level is potentially useful. However, here one lands instead on the binary QFT algebras described in \S6 which are more general and work fine.

   \medskip
\textbf{Metric or symplectic structures}

If the volume form is that of a  Riemann metric which is used to identify  multivector fields and forms  of the same degree and  thus also their respective products, then $\d$  becomes the metric  adjoint of exterior $d$. Recall that this adjoint is the conjugate of exterior $d$ by the Hodge star operator.  It is often  denoted $d^*$  or $\d$, has degree $-1$ and  its square is zero.
  \indent One sees  a significant  universal identity between $d$,  the exterior product  and  the Hodge star operator on forms  for all Riemannian manifolds:
  ``the adjoint of exterior $d$ is a second order derivation of the exterior product of forms".

   If the identification between tangent space and cotangent space is defined by a closed  nondegenerate two form  there is an analogous second order operator of square zero
   leading to  the the physicist's Batalin-Vilkovisky  calculations  motivating the  definition in \cite{P}  of a binary QFT algebra. The latter  is realized at the discrete level  above while the former requires the continuum.

\textbf{Funding:}
This work was supported by the US-Israel Binational Science Foundation [grant 2016219 to R.L.\&D.S.]; Balzan   Foundation [2014 Prize to D.S].

\bigskip{\bf  Postscript: Sir Michael Atiyah}

In August 1966 the third author, age 25, on the road between Warwick and Southampton to board a ship home, was graciously invited to lunch by Professor Michael Atiyah  at his  college  in Oxford to discuss math  questions related to $K$-theory  orientations for PL  manifolds, posed the evening before on the telephone.  On the way in to dining,  some  college fellows were congratulating  Professor Atiyah on his just awarded Fields Medal received in Moscow.

Professor Atiyah offered advice  during a jolly  lunch about good, better and best fields of Mathematics.  It was amazing how he could make conceptual connections between different fields. Still, young people could resist the advice of their elders: PL manifolds,  good; smooth manifolds-differential geometry, better; analysis-algebraic geometry-number theory, best.

  At the 1968 Global Analysis  Summer Conference in Berkeley, Michael Atiyah described, during a swim in the  Strawberry Canyon pool,  ``triality and the six sphere"  plus  a fascinating  way to understand, via complex conjugation, the holomorphic structure on the real Grassmanian of oriented two-planes in $\R^n$.

  In 1969, Michael Atiyah, at MIT's cafeteria related  Hironaka's resolution of singularities to  Hormander's characteristic variety   for hyperbolic PDE's.

  During  Professor Atiyah's tenure {\it c.}1970 at the IAS in Princeton,  this writer showed up at his office door very early one morning  with a zany idea.
  Atiyah managed to relate  Adams operations and    K-theory of PL manifolds to Brumer's 1967 linear independence over $\Q$ of $p$-adic logarithms of algebraic numbers.

  Michael Atiyah was also  telling in those days of the early 70's, of a very neat way to think about proving groups of odd order were solvable. This, by finding a fixed point theorem  for actions of such groups on complex projective spaces. His very recent attempts on the Feit-Thompson theorem related to this idea.

Two years ago  there were  meetings  at breakfast  several  mornings in Shanghai at the Alain Connes fest to  critically discuss his reasoning why $S^6$  should  not have an  integrable complex structure. Sometimes, he left the table  grumbling, but  returned to  battle the next day. The discussions did not resolve the issue.

With  Michael Atiyah's   strong intuition and deep  insight across many fields,   it seemed important  then,  and  it seems important now, to leave no stone unturned in  pursuing his ideas. We will miss Sir Michael Atiyah's unfettered enthusiasm for deep and broad mathematics and his exciting ideas for its advance.

\end{document}